\documentclass[10pt,a4paper]{article}
\usepackage[utf8]{inputenc}
\usepackage[T1]{fontenc}

\usepackage{tikz}
\usetikzlibrary{shapes,arrows,calc, fit, decorations}
\usetikzlibrary{decorations.pathreplacing}
\usetikzlibrary{fit}

\usepackage{float}
\usepackage{a4wide}
\usepackage{amssymb}

\usepackage{authblk}
\usepackage{hyperref}

\usepackage{pifont}

\usepackage[makeroom]{cancel}

\usepackage{array}
\newcolumntype{M}[1]{>{\centering\arraybackslash}m{#1}}
\newcolumntype{N}{@{}m{0pt}@{}}

\usepackage{caption}
\usepackage{subcaption} 

\usepackage{amsthm}
\usepackage{amsmath}
\usepackage{mathrsfs}

\usepackage{wrapfig}

\usepackage[]{algorithm2e}

\usepackage{stackengine}

\newtheorem{theorem}{Theorem}

\newtheorem{proposition}[theorem]{Proposition}
\newtheorem{lemma}[theorem]{Lemma}
\newtheorem{prop}[theorem]{Proposition}
\newtheorem{claim}[theorem]{Claim}

\newcounter{configSPf}

\setcounter{configSPf}{-1}

\newcounter{configSPfp}
\newcommand{\configSPfp}[1]{\refstepcounter{configSPfp}\label{#1}C_{\theconfigSPfp}}
\setcounter{configSPfp}{-1}

\newcounter{configSPn}

\setcounter{configSPn}{-1}

\newcounter{configSPnp}
\newcommand{\configSPnp}[1]{\refstepcounter{configSPnp}\label{#1}C_{\theconfigSPnp}}
\setcounter{configSPnp}{-1}

\newcounter{configSPqp}
\newcommand{\configSPqp}[1]{\refstepcounter{configSPqp}\label{#1}C_{\theconfigSPqp}}
\setcounter{configSPqp}{-1}

\newcounter{ruleSPf}
\newcommand{\ruleSPf}[1]{\refstepcounter{ruleSPf}\label{#1}R_{\theruleSPf}}

\newcounter{ruleSPfp}

\newcounter{ruleSPn}

\newcounter{ruleSPnp}
\newcommand{\ruleSPnp}[1]{\refstepcounter{ruleSPnp}\label{#1}R_{\theruleSPnp}}

\newcounter{ruleSPqp}

\DeclareMathOperator{\mad}{\mathrm{mad}}

\title{The chromatic number of signed graphs with bounded maximum average 
degree\footnote{Funding: This work was partially supported by the grant HOSIGRA funded by the French National Research Agency (ANR, Agence Nationale de la Recherche) under the contract number ANR-17-CE40-0022.}}
\author{Fabien Jacques, Alexandre Pinlou}
\affil{LIRMM, University of Montpellier, CNRS, Montpellier, France \\  \texttt firstname.lastname@lirmm.fr}
\date{}

\begin{document}

\maketitle

\begin{abstract}
A signed graph is a simple graph with two types of edges: positive and negative edges. Switching a vertex $v$ of a signed graph corresponds to changing the type of each edge incident to $v$.

A homomorphism from a signed graph $G$ to another signed graph $H$ is a mapping $\varphi: V(G) \rightarrow V(H)$ such that, after switching some of the vertices of $G$, $\varphi$ maps every edge of $G$ to an edge of $H$ 
of the same type. The chromatic number $\chi_s(G)$ of a signed graph $G$ is the order of a smallest signed graph $H$ such that there is a homomorphism from $G$ to $H$.

The maximum average degree $\mad(G)$ of a graph $G$ is the maximum of the 
average degrees of all the subgraphs of $G$. We denote $\mathcal{M}_k$ the class of signed graphs with maximum average degree less than $k$ and $\mathcal{P}_g$ the class of planar signed graphs of girth at least $g$.

We prove: 
\begin{itemize}
\item $\chi_s(\mathcal{P}_{7}) \le 5$,
\item $\chi_s(\mathcal{M}_{\frac{17}{5}}) \le 10$ which implies $\chi_s(\mathcal{P}_{5}) \le 10$,
\item $\chi_s(\mathcal{M}_{4-\frac{8}{q+3}}) \le q+1$ with $q$ a prime power congruent to 1 modulo 4.
\end{itemize}
\end{abstract}

\section{Introduction}
There exist several notions of colorings of signed graphs which are all natural extensions and generalizations of colorings of simple graphs. It is well-known that a $k$-coloring of a graph is no more than a homomorphism to the complete graph on $k$ vertices. Using the notion of homomorphism 
of signed graphs introduced by Guenin~\cite{gue05} in 2005, we can define 
a corresponding notion of coloring of signed graphs. This has attracted a 
lot of attention since then and the general question of knowing whether every signed graphs in a family admits a homomorphism to some $H$ has been extensively studied. We can for example cite the expansive papers by Naserasr et al.~\cite{nrs15, HomSGU} where they developed many aspects of this notion. 

Coloring planar graphs has become an illustrious problem in the middle of 
the $19^{\rm{th}}$ century thanks to the Four Color Theorem, that states that four colors are enough to color any simple planar graph. Various branches of this topic then arose, one of which being devoted to the coloring of \emph{sparse} planar graphs. A way to measure the sparseness of a planar graph is to consider its girth (i.e. the length of a shortest cycle): the higher the girth is, the sparser the graph is. Signed coloring of sparse planar graphs has been considerably studied in the last decade (see 
e.g.~\cite{bdnpss20, bkkw04, moprs10, nrs15, nr00, OPS16}). 

A way to get results on sparse planar graphs is to consider graphs (not necessarily planar) with bounded maximum average degree since there exists 
a well-known relation that links the maximum average degree and the girth 
of a planar graph (details are given in the next subsection).

In this paper, we consider homomorphisms of signed graphs with bounded maximum average degree. 

We will first give some classical definitions, define signed graphs and homomorphisms in the remainder of this section and the list of target graphs we will use. Section~\ref{sec:results} introduces the results we obtained and put them in the perspective with the known results. The proof techniques are similar for all our results and we present them in Section~\ref{sec:proof-tech}. Sections~\ref{sec:thSP5} to~\ref{sec:thSPq+} are dedicated to the proofs of our results.

\subsection{Definitions and notation}\label{sec:def}

The degree of a vertex $v$ is its number of neighbors and is denoted by $d(v)$. 
We call a vertex of degree $k$ a \emph{$k$-vertex}, a vertex of degree at 
least $k$ a \emph{$k^+$-vertex} and a vertex of degree at most $k$ a \emph{$k^-$-vertex}. 
We denote by $N(v)$ (resp. $N^-(v)$, $N^+(v)$) the set of vertices that are adjacent (resp. adjacent with a negative edge, adjacent with a positive edge) to a vertex $v$. Let $W$ be a set of vertices, $N(W) = \bigcup_{v \in W} N(v)$ (we also define $N^-(W)$ and $N^+(W)$ similarly).
The \emph{order} of a graph $G$ is the cardinality of its vertex set. The 
\emph{girth} of a graph is the length of a shortest cycle. The \textit{maximum average degree} $\mad(G)$ of a graph $G$ is the maximum of the average degree of all the subgraphs of $G$. There exists a well-known relation that links the maximum average degree and the girth of a planar graph: 
\begin{claim}\label{cl:mad-girth}
Every planar graph $G$ of girth at least $g$ has $\mad(G) <\frac{2g}{g-2}$.  
\end{claim}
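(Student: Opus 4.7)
The plan is a standard Euler's formula argument applied subgraph-by-subgraph. Since every subgraph $H$ of a planar graph $G$ is itself planar, and since removing edges/vertices can only increase the girth (or leave it infinite if $H$ becomes acyclic), any subgraph $H$ of $G$ is planar with girth at least $g$. So it suffices to show that any planar graph on $n$ vertices and $m$ edges with girth at least $g$ satisfies $\frac{2m}{n} < \frac{2g}{g-2}$, and then take the maximum over subgraphs.

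First I would dispose of the degenerate case: if $H$ has no cycle then $m \le n-1$, so the average degree is strictly less than $2$, and since $\frac{2g}{g-2} > 2$ for every $g \ge 3$, the bound holds trivially. Otherwise, $H$ contains at least one cycle; embed $H$ in the plane and let $f$ denote its number of faces. Since every face is bounded by a closed walk of length at least $g$ (using girth$(H)\ge g$) and every edge lies on the boundary of at most two faces, a double-counting of edge-face incidences gives $gf \le 2m$, hence $f \le \frac{2m}{g}$.

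Next I would plug this into Euler's formula $n - m + f = 2$ (valid since $H$ is connected enough to apply it after reducing to a connected component with the smallest ratio, or by summing Euler over components and observing the bound gets only stronger). This yields
\[
2 = n - m + f \le n - m + \tfrac{2m}{g},
\]
so $m(g-2) \le g(n-2) < gn$, giving $\frac{2m}{n} < \frac{2g}{g-2}$, exactly the bound on the average degree of $H$. Taking the supremum over all subgraphs $H$ of $G$ yields $\mad(G) < \frac{2g}{g-2}$.

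The only mild subtlety is handling disconnected subgraphs and acyclic ones, which is why I isolated the forest case first; once that is out of the way, Euler's formula applied to each nontrivial planar subgraph carries everything. No real obstacle is expected, as this is a textbook consequence of Euler's formula.
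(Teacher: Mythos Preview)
The paper does not actually prove this claim; it is stated as ``a well-known relation'' and left without argument. Your proposal supplies the standard Euler-formula proof, and it is correct. The only points worth tightening are expository: for a subgraph $H$ with $c$ components one has $n-m+f=1+c\ge 2$, so the inequality you need survives without first passing to a single component; and the assertion that every face boundary walk has length at least $g$ holds because any shorter closed walk in a graph of girth $g$ would have to retrace edges and hence bound no region once a cycle is present. You already flagged both issues, so there is no genuine gap.
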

Let us denote by $\mathcal{P}_g$ (resp. $\mathcal{M}_d$) the class of planar graphs of girth at least $g$ (resp. the class of graphs with maximum average degree less than $d$). Therefore, $\mathcal{P}_3$ corresponds to the class of planar graphs (since $3$ is the smallest size of a cycle).

\subsection{Signed graphs}

A \textit{signed graph} $G = (V, E, s)$ is a simple graph $(V, E)$ with 
two kinds of edges: positive and negative edges. The signature $s: E(G) \rightarrow \{-1, +1\}$ assigns to each edge its sign (we do not allow parallel edges nor loops). Given a signed graph $G = (V, E, s)$, the \textit{underlying graph} of $G$ is the simple graph $(V, E)$.
\textit{Switching} a vertex $v$ of a signed graph corresponds to reversing the signs of all the edges that are incident to $v$. 
Two signed graphs $G$ and $G'$ are \textit{switching equivalent} if it is 
possible to turn $G$ into $G'$ after some number of switches. The \emph{balance} of a closed walk of a signed graph is the parity of its number of negative edges; a closed walk is said to be \textit{balanced} (resp. \textit{unbalanced}) if it has an even (resp. odd) number of negative edges. 

We can note that a switch does not alter the parity of any closed walk since a switch reverses the sign of an even number of edges of a closed walk. Therefore, Zaslavsky~\cite{Z82} showed the following:
\begin{theorem}[Zaslavsky \cite{Z82}]
\label{thm:Z}
Two signed graphs are switching equivalent if and only if they have the same underlying graph and the same set of balanced cycles.
\end{theorem}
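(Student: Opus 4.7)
The forward direction is essentially already observed in the paragraph preceding the statement: a switch at a vertex $v$ flips the signs of all edges incident to $v$, and any closed walk uses an even number of such edges (each visit to $v$ contributes two incidences, one entering and one leaving). So the parity of the number of negative edges on any closed walk, and in particular on any cycle, is preserved by every switch, and hence by any sequence of switches. The underlying graph is obviously preserved as well.

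For the reverse direction, my plan is to first establish the following key reduction: a signed graph $H$ is switching equivalent to the signed graph obtained by making every edge positive if and only if every cycle of $H$ is balanced. The ``only if'' part is a consequence of the forward direction just discussed (the all-positive signature has every cycle balanced). For the ``if'' part, pick a spanning tree $T$ in each connected component of $H$ and root it. Process the non-root vertices in any order that respects depth (parents before children), and at each vertex $v$, switch $v$ precisely when the edge from $v$ to its parent is currently negative. A vertex is switched at most once in this procedure, and after processing all vertices every tree edge is positive. Now consider any non-tree edge $uv$: together with the unique $u$-to-$v$ path in $T$ it forms a cycle $C$, and all tree edges of $C$ are positive after our switches. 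Since $C$ was balanced by hypothesis and switching preserves balance, $C$ remains balanced after our switches, which forces $uv$ to be positive as well. Thus every edge of the resulting graph is positive.

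To finish the proof, suppose $G=(V,E,s)$ and $G'=(V,E,s')$ share the same underlying graph and the same set of balanced cycles. Define a new signature $s''$ on the common underlying graph by
\[
s''(e) = s(e)\cdot s'(e).
\]
For any cycle $C$ we have
\[
\prod_{e \in C} s''(e) \;=\; \Bigl(\prod_{e\in C} s(e)\Bigr)\Bigl(\prod_{e\in C} s'(e)\Bigr),
\]
which equals $+1$ exactly when $C$ has the same balance in $G$ and in $G'$. By hypothesis this is always the case, so every cycle of $(V,E,s'')$ is balanced. By the key lemma, there is a set $S\subseteq V$ whose simultaneous switching turns $s''$ into the all-positive signature. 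Performing the switches of $S$ on $G'$ replaces $s'(e)$ by $s'(e)\cdot (-1)^{|S\cap\{u,v\}|}$ on each edge $e=uv$, and the relation $s(e)=s''(e)\,s'(e)$ together with $(-1)^{|S\cap\{u,v\}|}=s''(e)$ shows that the result is exactly $s$. Hence $G'$ is switching equivalent to $G$.

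The main obstacle is really only the first direction of the key lemma, that is, recognizing that the right reduction is to the all-positive case via the ``difference signature'' $s\cdot s'$; once this is set up, the spanning-tree argument is routine. Disconnected graphs cause no additional difficulty because switching, balance of cycles, and the spanning-tree argument all operate independently on each connected component.
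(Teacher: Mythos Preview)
The paper does not prove this theorem; it merely states it with attribution to Zaslavsky~\cite{Z82} and uses it as a black box later on. So there is no ``paper's own proof'' to compare against.

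Your argument is correct and is essentially the standard proof of Zaslavsky's result. The forward direction is exactly as you say. For the converse, the spanning-tree normalisation (make all tree edges positive by switching along a rooted spanning tree, then use balance of fundamental cycles to conclude the non-tree edges are positive too) is the classical approach, and your reduction via the product signature $s''=s\cdot s'$ is clean and correct: the computation $(-1)^{|S\cap\{u,v\}|}=s''(e)$ followed by $s'(e)\cdot(-1)^{|S\cap\{u,v\}|}=s'(e)\,s(e)\,s'(e)=s(e)$ is exactly right.

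One small stylistic remark: in the spanning-tree step you say ``process the non-root vertices in any order that respects depth (parents before children)''. It is worth noting explicitly that once a tree edge $pv$ has been made positive, neither $p$ nor $v$ is ever switched again, so the edge stays positive; you implicitly rely on this but it costs nothing to say it. Also, your parenthetical ``the main obstacle is really only the first direction of the key lemma'' seems to have the direction reversed---the nontrivial direction is the \emph{if} part (balanced implies switching-equivalent to all-positive), not the \emph{only if} part. This is just a slip in the commentary and does not affect the mathematics.
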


\subsection{Homomorphisms of signed graphs}

Given two signed graphs $G$ and $H$, the mapping $\varphi : V(G)\rightarrow V(H)$ is a \textit{homomorphism} if $\varphi$ preserves adjacencies and the balance of closed walks: an edge $uv$ of $G$ maps to an edge $\varphi(u)\varphi(v)$ of $H$ and a closed walk $v_1v_2\dots v_k$ of $G$ maps to a closed walk  $\varphi(v_1)\varphi(v_2)\dots \varphi(v_k)$ of $H$ of the same balance. This can be seen as coloring the graph $G$ by using the vertices of $H$ as colors. We write $G \rightarrow H$ when there exists an homomorphism from $G$ to $H$.  This notion of homomorphism was introduced by Guenin~\cite{gue05} in 2005 and arises naturally from the fact that 
the balance of closed walks is central in the field of signed graphs.\medskip

Let us introduce the following notion of sign-preserving homomorphisms which is central in studying homomorphisms of signed graphs (see Lemma~\ref{lem:BG} in the next section to understand why) and allows us to give an alternate definition to homomorphisms of signed graphs. Given two signed graphs $G$ and $H$, the mapping $\varphi : V(G)\rightarrow V(H)$ is a \textit{sign-preserving homomorphism} (sp-homomorphism) if $\varphi$ preserves adjacencies and the signs of edges: if vertices $1$ and 
$2$ in $H$ are connected with a positive (resp. negative) edge, then every pair of adjacent vertices in $G$ colored with $1$ and $2$ must be connected with a positive (resp. negative) edge. We write $G \xrightarrow{sp} H$ when there exists an sp-homomorphism from $G$ to $H$. Note that an sp-homomorphism is clearly a homomorphism (adjacencies and balances of closed walk are kept). A reader familiar with the notion of homomorphisms of $2$-edge-colored graphs will recognize that it coincides with the notion of sign-preserving homomorphisms of signed graphs.\medskip

We can then alternatively define homomorphism of signed graph as follows: 
$G \rightarrow H$ if and only if there exists a signed graph $G'$ switching equivalent to $G$ such that $G' \xrightarrow{sp} H$. See \cite{HomSGU} 
for a proof of that equivalence. 

The \textit{chromatic number} $\chi_s(G)$ (resp. \textit{sign-preserving chromatic number} $\chi_{sp}(G)$) of a signed graph $G$ is the order of a 
smallest graph $H$ such that $G \rightarrow H$ (resp. $G \xrightarrow{sp} 
H)$. 
The (sign-preserving) chromatic number $\chi_{s/sp}(\mathcal{C})$ of a class of signed graphs $\mathcal{C}$ is the maximum of the (sign-preserving) chromatic numbers of the graphs in the class.  If $G$ admits a (sp-)homomorphism $\varphi$ to $H$, we say that $G$ is \textit{$H$-(sp-)colorable} and that $\varphi$ is an \emph{$H$-(sp-)coloring} of $G$.

\subsection{Target Graphs}

We present in this subsection the target graphs that will be used to prove our results.

Let $G = (V, E, s)$ be a signed graph. The graph $G$ is said to be \textit{antiautomorphic} if it is isomorphic to $(V, E, -s)$. 
The graph $G$
is said to be \textit{$K_n$-transitive} if for every pair of cliques $\{u_1, u_2, \ldots , u_n\}$ and $\{v_1, v_2, \ldots , v_n\}$ in $G$ such that $s(u_i u_j) = s(v_i v_j)$ for all $i \neq j$, there exists an automorphism that maps $u_i$ to $v_i$ for all $i$. 
For $n = 1$, $2$, or $3$, we say that the graph is \textit{vertex-transitive}, \textit{edge-transitive}, or \textit{triangle-transitive}, respectively. 

The graph $G$ has \emph{Property $P_{k, n}$} if for every sequence of $k$ 
distinct vertices $(v_1, v_2, \dots, v_k)$ that induces a clique in $G$ and for every sign vector $(\alpha_1, \alpha_2, ..., \alpha_k) \in \{-1, +1\}^k$ there exist at least $n$ distinct vertices $\{u_1, u_2, ..., u_n\}$ such that $s(v_i u_j) = \alpha_i$ for $1 \leq i \leq k$ and $1 \leq j 
\leq n$.

Let $q$ be a prime power with $q \equiv 1 \pmod 4$. Let $\mathbb{F}_q$ be the finite field of order $q$.
The \textit{signed Paley graph} $SP_q$ has vertex set $V(SP_q) = \mathbb{F}_q$. Two vertices $u$ and $v \in V(SP_q)$, $u \neq v$, are connected with a positive edge if $u - v$ is a square in $\mathbb{F}_q$ and with a negative edge otherwise.

Notice that this definition is consistent since $q \equiv 1 \pmod 4$ ensures that $-1$ is always a square in $\mathbb{F}_q$ and if $u-v$ is a square then $v-u$ is also a square.

\begin{lemma}[\cite{OPS16}]
\label{lem:PSP}
The signed graph $SP_q$ is vertex-transitive, edge-transitive, antiautomorphic and has properties $P_{1, \frac{q-1}{2}}$ and $P_{2, \frac{q-5}{4}}$.
\end{lemma}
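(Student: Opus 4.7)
The plan is to exploit the group of affine maps $\tau_{a,b}\colon x \mapsto ax+b$ on $\mathbb{F}_q$, noting that $\tau_{a,b}(u)-\tau_{a,b}(v)=a(u-v)$; hence $\tau_{a,b}$ is an automorphism of $SP_q$ when $a$ is a nonzero square, and a sign-reversing automorphism of the underlying complete graph when $a$ is a nonsquare.

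Vertex-transitivity is immediate from the translations $\tau_{1,b}$. For edge-transitivity, given two edges $u_1u_2$ and $v_1v_2$ of the same sign, I would first translate to send $u_1$ to $v_1$ and then scale around $v_1$ by $t=(v_2-v_1)/(u_2-u_1)$; because the two edges carry the same sign, $t$ is a nonzero square, so the composition is an automorphism mapping $(u_1,u_2)$ to $(v_1,v_2)$. The antiautomorphism is any $\tau_{a,0}$ with $a$ a nonsquare. Property $P_{1,(q-1)/2}$ then follows at once since every vertex has exactly $(q-1)/2$ positive neighbors (the nonzero squares) and $(q-1)/2$ negative neighbors.

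The technical step is property $P_{2,(q-5)/4}$. By vertex-transitivity I would take $v_1=0$ and fix $v_2\neq 0$ with $\chi(v_2)=\beta\in\{-1,+1\}$, where $\chi$ denotes the quadratic character on $\mathbb{F}_q$ extended by $\chi(0)=0$. Then I would count $u\in\mathbb{F}_q\setminus\{0,v_2\}$ satisfying $\chi(u)=\alpha_1$ and $\chi(u-v_2)=\alpha_2$ using the identity $\mathbf{1}[\chi(x)=\alpha]=(1+\alpha\chi(x))/2$ (valid for $x\neq 0$) and expanding into four character sums. Three collapse via $\sum_{u\in\mathbb{F}_q}\chi(u)=0$; the fourth is $\sum_{u\in\mathbb{F}_q}\chi(u(u-v_2))=-1$ for $v_2\neq 0$, which follows from writing $u(u-v_2)=u^2(1-v_2/u)$ and substituting $t=v_2/u$. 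After simplification the count equals $\tfrac{1}{4}\bigl(q-2-(\alpha_1+\alpha_2)\beta-\alpha_1\alpha_2\bigr)$, whose minimum over $(\alpha_1,\alpha_2)\in\{-1,+1\}^2$ is exactly $(q-5)/4$, attained when $\alpha_1=\alpha_2=\beta$.

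The only non-routine ingredient is the character-sum evaluation, and that reduces to the classical identity above by a one-line substitution; every other assertion is a short consequence of the affine action on $\mathbb{F}_q$, so no real obstacle is expected.
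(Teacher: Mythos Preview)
Your argument is correct. Note, however, that the paper does not supply its own proof of this lemma: it is quoted from~\cite{OPS16} and stated without proof, so there is nothing in the present paper to compare your approach against. The affine-action plus quadratic-character-sum argument you outline is the standard one, and your computation of the count $\tfrac14\bigl(q-2-(\alpha_1+\alpha_2)\beta-\alpha_1\alpha_2\bigr)$ is accurate (in the third term you implicitly use $\chi(-v_2)=\chi(v_2)$, which holds precisely because $q\equiv 1\pmod 4$ makes $-1$ a square; this is worth making explicit).
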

 Figure~\ref{fig:SP_9} gives as an example the signed graph $SP_9$ which contains nine vertices and is complete (only positive edges are displayed, non-edges are negative edges).
 
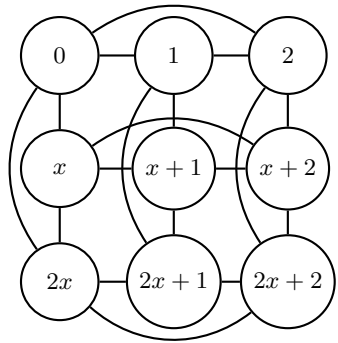
\begin{figure}[!h]
	\centering
	\scalebox{1}
	{
		\begin{tikzpicture}[thick]
		\def \radius {1cm}
		\def \margin {8} 
		\tikzstyle{vertex}=[circle,minimum width=2.9em]
		
		\node[draw, vertex] (0) at (0, 3) {\small$0$};
		\node[draw, vertex] (1) at (1.5, 3) {\small$1$};
		\node[draw, vertex] (2) at (3, 3) {\small$2$};
		\node[draw, vertex] (3) at (0, 1.5) {\small$x$};
		\node[draw, vertex] (4) at (1.5, 1.5) {\small$x+1$};
		\node[draw, vertex] (5) at (3, 1.5) {\small$x+2$};
		\node[draw, vertex] (6) at (0, 0) {\small$2x$};
		\node[draw, vertex] (7) at (1.5, 0) {\small$2x+1$};
		\node[draw, vertex] (8) at (3, 0) {\small$2x+2$};

		\draw (0) -- (1);
		\draw (1) -- (2);
		\draw (2) edge[bend right=35]  (0);
		
		\draw (3) -- (4);
		\draw (4) -- (5);
		\draw (5) edge[bend right=35]  (3);
		
		\draw (6) -- (7);
		\draw (7) -- (8);
		\draw (8) edge[bend left=40]  (6);
		
		\draw (0) -- (3);
		\draw (3) -- (6);
		\draw (6) edge[bend left=35]  (0);
		
		\draw (1) -- (4);
		\draw (4) -- (7);
		\draw (7) edge[bend left=35]  (1);
		
		\draw (2) -- (5);
		\draw (5) -- (8);
		\draw (8) edge[bend left=35]  (2);
		
		\end{tikzpicture}
	}
	\caption{The graph $SP_9$, non-edges are negative edges.}
	\label{fig:SP_9}
\end{figure}

Given a signed graph $G$ of signature $s_G$, we create the \textit{antitwinned graph} of $G$ denoted by $\rho(G)$ as follows:
\begin{itemize}
\item We take two copies $G^{+1}$, $G^{-1}$ of $G$ (the vertex corresponding to $v \in V(G)$ in $G^{i}$ is denoted by~$v^i$)
\item $V(\rho(G)) = V(G^{+1}) \cup V(G^{-1})$
\item $E(\rho(G)) = \{ u^i v^j : uv \in E(G), \ i, j \in \{-1, +1\} \}$
\item $s_{\rho(G)}(u^i v^j) = i \times j \times s_G(u, v)$
\end{itemize}

By construction, for every vertex $v$ of $G$, $v^{-1}$ and $v^{+1}$ are \textit{antitwins}, the positive neighbors of $v^{-1}$ being the negative neighbors of $v^{+1}$ and vice versa.  We say that a signed graph is \textit{antitwinned} if every vertex has a unique antitwin. If $v$ is a vertex in an antitwinned graph, we denote its antitwin with $\overline{v}$.

Antitwinned signed graphs play a central role thanks to the following lemma:
\begin{lemma}[\cite{HomEG}]
\label{lem:BG}
Let $G$ and $H$ be signed graphs. The two following propositions are equivalent:
\begin{itemize}
\item The graph $G$ admits a homomorphism to $H$.
\item The graph $G$ admits a sp-homomorphism to $\rho(H)$.
\end{itemize}
\end{lemma}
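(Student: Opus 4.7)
The plan is to exhibit a natural bijection between switchings of $G$ and choices of lifts to the two copies $H^{+1}, H^{-1}$ inside $\rho(H)$, and then verify by a direct sign computation that this bijection carries homomorphisms to sp-homomorphisms and back. The key algebraic fact driving everything is that the sign of $u^i v^j$ in $\rho(H)$ is $i \cdot j \cdot s_H(uv)$, so swapping $v^{+1}$ for $v^{-1}$ in a sp-coloring has exactly the same effect as switching $v$.

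For the implication $G \to H \Rightarrow G \xrightarrow{sp} \rho(H)$, I would start from the alternate definition: a homomorphism $\varphi : G \to H$ comes with a subset $S \subseteq V(G)$ such that switching the vertices of $S$ produces $G'$ with $\varphi : G' \xrightarrow{sp} H$. I then define $\psi : V(G) \to V(\rho(H))$ by $\psi(v) = \varphi(v)^{+1}$ if $v \notin S$ and $\psi(v) = \varphi(v)^{-1}$ otherwise. For any edge $uv$ of $G$, writing $\sigma(x) = +1$ if $x \notin S$ and $-1$ if $x \in S$, the switching gives $s_{G'}(uv) = \sigma(u)\sigma(v)\,s_G(uv)$, which equals $s_H(\varphi(u)\varphi(v))$ by sp-preservation. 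Then
$$s_{\rho(H)}(\psi(u)\psi(v)) = \sigma(u)\sigma(v)\,s_H(\varphi(u)\varphi(v)) = \sigma(u)^2 \sigma(v)^2 \, s_G(uv) = s_G(uv),$$
so $\psi$ is a sp-homomorphism.

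For the converse, from a sp-homomorphism $\psi : G \xrightarrow{sp} \rho(H)$ I define $\varphi : V(G) \to V(H)$ by forgetting the superscript in $\psi(v)$, and take $S := \{v : \psi(v) \in V(H^{-1})\}$. Switching $S$ produces $G'$ with $s_{G'}(uv) = \sigma(u)\sigma(v)\,s_G(uv)$, and the sp-preservation of $\psi$ gives $s_G(uv) = \sigma(u)\sigma(v)\,s_H(\varphi(u)\varphi(v))$. Multiplying the two yields $s_{G'}(uv) = s_H(\varphi(u)\varphi(v))$, so $\varphi : G' \xrightarrow{sp} H$. Since $G$ and $G'$ are switching equivalent, this certifies $\varphi : G \to H$ by the alternate definition of homomorphism.

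The only real thing to check is the sign bookkeeping, which is essentially the same identity $\sigma(u)^2\sigma(v)^2 = 1$ used in both directions; there is no combinatorial obstacle beyond being careful that the alternate definition of homomorphism (via switching + sp-homomorphism) is used, since using the closed-walk-balance formulation directly would be more cumbersome.
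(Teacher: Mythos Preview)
Your proof is correct. Note, however, that the paper does not supply its own proof of this lemma: it is stated with a citation to \cite{HomEG} (Brewster--Graves) and used as a black box. Your argument is precisely the standard one and is essentially what one would find in the cited reference: encode the switching set $S$ as a choice of copy $\{+1,-1\}$ in $\rho(H)$, and use the identity $s_{\rho(H)}(u^i v^j) = i\cdot j\cdot s_H(uv)$ together with $\sigma(u)^2\sigma(v)^2=1$ to see that switching at $S$ on the $G$-side is the same as flipping copies on the $\rho(H)$-side. One small point you leave implicit is adjacency preservation: in the backward direction, the fact that antitwins are non-adjacent in $\rho(H)$ (since $H$ has no loops) guarantees that $\varphi(u)\neq\varphi(v)$ and hence that $\varphi(u)\varphi(v)$ is a genuine edge of $H$; this is routine but worth a sentence.
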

In other words, if a signed graph $G=(V,E,s)$ admits an sp-homomorphism 
to an antitwinned target graph on $n$ vertices, then it also admits a homomorphism to a target graph on $\frac{n}{2}$ vertices. We therefore have the following inequalities:

\begin{proposition}[\cite{nrs15}]\label{prop:ineq-switch-2edge}
For every signed graph $G$, we have $\chi_s(G) \le \chi_{sp}(G) \le 2 \cdot \chi_s(G)$.
\end{proposition}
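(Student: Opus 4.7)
The plan is to establish the two inequalities separately, and both follow almost immediately from what has already been set up in the paper.

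For the first inequality $\chi_s(G) \le \chi_{sp}(G)$, I would argue as follows. Let $n = \chi_{sp}(G)$ and let $H$ be a signed graph on $n$ vertices such that $G \xrightarrow{sp} H$. As observed in the text right after the definition of sp-homomorphisms, any sp-homomorphism is in particular a homomorphism (it preserves adjacencies and signs of edges, hence a fortiori the balance of every closed walk). Thus $G \rightarrow H$, which yields $\chi_s(G) \le n = \chi_{sp}(G)$.

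For the second inequality $\chi_{sp}(G) \le 2\cdot\chi_s(G)$, I would invoke Lemma~\ref{lem:BG} together with the construction of the antitwinned graph. Let $m = \chi_s(G)$ and let $H$ be a signed graph on $m$ vertices with $G \rightarrow H$. By Lemma~\ref{lem:BG}, this is equivalent to the existence of an sp-homomorphism $G \xrightarrow{sp} \rho(H)$. By construction, $V(\rho(H)) = V(H^{+1}) \cup V(H^{-1})$ consists of two disjoint copies of $V(H)$, so $|V(\rho(H))| = 2m$. Therefore $\chi_{sp}(G) \le 2m = 2\cdot\chi_s(G)$.

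There is no real obstacle here: both directions are essentially bookkeeping, with the only nontrivial ingredient being Lemma~\ref{lem:BG}, which is already quoted from \cite{HomEG}. The only mild subtlety to mention is why $\rho(H)$ has exactly $2|V(H)|$ vertices (the two copies are disjoint by construction, even if $H$ happens to already contain antitwin pairs), which justifies the factor $2$ in the bound.
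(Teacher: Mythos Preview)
Your proof is correct. The paper does not give an explicit proof of this proposition (it is cited from \cite{nrs15}), but the phrase ``We therefore have the following inequalities'' placed immediately after Lemma~\ref{lem:BG} indicates exactly the argument you wrote: the first inequality because every sp-homomorphism is a homomorphism, and the second because $G\to H$ is equivalent to $G\xrightarrow{sp}\rho(H)$ with $|V(\rho(H))|=2|V(H)|$.
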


Graphs $\rho(SP_q)$ have the remarkable structural properties given below:
\begin{lemma}[\cite{OPS16}]
\label{lem:PrhoSP}
The graph $\rho(SP_q)$ is vertex-transitive, antiautomorphic and has properties $P_{1, q-1}$, $P_{2, \frac{q-3}{2}}$ and $P_{3, \max(\frac{q-9}{4}, 0)}$.
\end{lemma}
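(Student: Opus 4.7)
The plan is to verify each of the five assertions (vertex-transitivity, antiautomorphy, $P_{1,q-1}$, $P_{2,\frac{q-3}{2}}$, $P_{3,\max(\frac{q-9}{4},0)}$) separately, leveraging the corresponding properties of $SP_q$ given by Lemma~\ref{lem:PSP} together with quadratic character sums over $\mathbb{F}_q$.

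For the structural properties I would exhibit explicit automorphisms. Translations $(x,i)\mapsto (x+a,i)$ clearly preserve edge signs in $\rho(SP_q)$, since $s_{\rho(SP_q)}((u,i)(v,j))=ij\cdot s_{SP_q}(u-v)$ depends only on $u-v$ and on $i,j$. The involution $(x,i)\mapsto (x,-i)$ is also an automorphism because replacing $(i,j)$ by $(-i,-j)$ leaves $ij$ unchanged. Combining these already yields transitivity on $V(\rho(SP_q))$. For antiautomorphy, I would pick a non-square $n\in\mathbb{F}_q^{\star}$ and use $(x,i)\mapsto (nx,i)$: multiplication by $n$ turns squares into non-squares in $\mathbb{F}_q$, so every sign is flipped while adjacencies are preserved. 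Property $P_{1,q-1}$ is then essentially a degree count: every vertex has $2q-2$ neighbors (all vertices except its antitwin), and by vertex-transitivity I may fix $v=(0,+1)$; counting squares vs.\ non-squares in $\mathbb{F}_q^{\star}$ gives exactly $q-1$ positive and $q-1$ negative neighbors.

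For $P_{2}$ and $P_{3}$ I would set up a uniform character-sum computation. By vertex-transitivity, fix $v_1=(0,+1)$, write $v_2=(x,j)$ and (for $P_3$) $v_3=(z,\ell)$ with $0,x,z$ pairwise distinct, and parametrize a candidate $u=(y,k)$. The sign conditions $s(v_m u)=\alpha_m$ translate into conditions on $\chi(y)$, $\chi(y-x)$, $\chi(y-z)$ where $\chi$ is the quadratic character on $\mathbb{F}_q$. Using the indicator $\frac{1}{2}(1+\beta\chi(t))$ for $\chi(t)=\beta$ and expanding the product, the number of valid $u$ for a fixed $k$ becomes $\frac{1}{2^n}$ times a sum of Jacobsthal-type character sums over $y\in\mathbb{F}_q\setminus\{0,x,z\}$. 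Summing over $k\in\{-1,+1\}$ is crucial: all monomials containing an odd total power of $k$ (i.e.\ odd number of the factors $\chi(y),\chi(y-x),\chi(y-z)$) cancel, leaving only the constant term and the quadratic cross-terms. Each surviving cross-term reduces via the standard identity $\sum_{y\in\mathbb{F}_q}\chi(y(y-a))=-1$ for $a\neq 0$ (plus a correction for the excluded point), yielding a closed-form expression of the shape $\tfrac{1}{4}\bigl[(q-c)+\sum \pm 1\bigr]$.

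The main obstacle is executing the $P_3$ estimate cleanly: one must track six $\pm 1$ terms (three from the $-1$ parts of the Jacobsthal evaluations and three from the boundary corrections involving $\chi(x),\chi(z),\chi(z-x)$), and check that their joint worst case gives a lower bound of $(q-3-6)/4=(q-9)/4$. Once these character sum identities are collected, the bounds $\frac{q-3}{2}$ and $\frac{q-9}{4}$ drop out immediately; the $\max(\cdot,0)$ just reflects that the estimate is vacuous for very small $q$. No part needs more than elementary finite field arithmetic once the Jacobsthal identity is in hand.
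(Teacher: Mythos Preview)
The paper does not supply its own proof of this lemma: it is quoted from \cite{OPS16} and used as a black box, so there is no in-paper argument to compare against. That said, your proposal is correct and self-contained. The automorphisms you exhibit (field translations, the involution $(x,i)\mapsto(x,-i)$, and multiplication by a non-square) are exactly the right maps for vertex-transitivity and antiautomorphy of $\rho(SP_q)$, and the degree count for $P_{1,q-1}$ is immediate. Your character-sum scheme for $P_2$ and $P_3$ is the standard one and goes through cleanly: after summing over $k\in\{+1,-1\}$ only the even-degree monomials survive, each cross-term reduces via the Jacobsthal identity $\sum_y\chi\bigl((y-a)(y-b)\bigr)=-1$, and the boundary corrections from the excluded points contribute the additional $\pm 1$ terms you describe, yielding the worst case $(q-3-6)/4=(q-9)/4$. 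One small point worth making explicit in the write-up: the clique hypothesis on $v_1,v_2,v_3$ in $\rho(SP_q)$ is equivalent simply to $0,x,z$ being pairwise distinct in $\mathbb{F}_q$ (adjacency in $\rho(SP_q)$ is just inequality of the $\mathbb{F}_q$-coordinate), so the triangle condition imposes nothing beyond what you already use.
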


Given a signed graph $G$ which is vertex-transitive, we denote by $G^-$ the graph obtained from $G$ by removing any vertex.
Given a signed graph $G$, we denote by $G^+$ the graph obtained from $G$ by adding a vertex that is connected with a positive edge to every other vertex.  

In the literature, the graph $\rho(SP_q^+)$ is also known as the Tromp-Paley graph $TR(SP_q)$.
This construction improves the properties of $\rho(SP_q)$ at the cost of having only two more vertices (indeed, $|V(\rho(SP_q^+))| = |V(\rho(SP_q))| + 2$). 

\begin{lemma}[\cite{OPS16}]
\label{lem:PTRSP}
The graph $\rho(SP_q^+)$ is vertex-transitive, edge-transitive, antiautomorphic and has properties $P_{1, q}$, $P_{2, \frac{q-1}{2}}$ and $P_{3, \frac{q-5}{4}}$.
\end{lemma}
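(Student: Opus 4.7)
The plan is to verify the six claimed properties of $\rho(SP_q^+)$ in order, exploiting the structural features of the antitwinning operation together with the already-established properties of $SP_q$ in Lemma~\ref{lem:PSP}. Note that the underlying graph is complete on $2(q+1)$ vertices and each vertex has degree $2q$ (its antitwin is the unique non-neighbor).

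For vertex-transitivity, I first observe that the antitwin involution $v^i \mapsto v^{-i}$ is always an automorphism of $\rho(G)$, and that any automorphism of $SP_q$ lifts to an automorphism of $\rho(SP_q^+)$ fixing $\infty^{\pm 1}$; combined with the vertex-transitivity of $SP_q$, this already gives transitivity within the finite vertices of each copy. The crux is exhibiting an automorphism sending $\infty^{+1}$ to a finite vertex. The general recipe is that automorphisms of $\rho(G)$ are parametrized by pairs $(\sigma, \epsilon)$, where $\sigma$ is a bijection of $V(G)$ and $\epsilon : V(G) \to \{\pm 1\}$ satisfies $\epsilon(u)\epsilon(v)\, s(\sigma(u)\sigma(v)) = s(uv)$ on every edge $uv$; equivalently, $\sigma$ is an isomorphism from $SP_q^+$ to a signed graph switching-equivalent to $SP_q^+$. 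I plan to produce such a pair explicitly, for instance by taking $\sigma$ to be inversion $x \mapsto x^{-1}$ on $\mathbb{F}_q^*$ extended by $0 \leftrightarrow \infty$ paired with the switching set equal to the non-squares; the compatibility reduces to the facts that $-1$ and inverses of squares are squares in $\mathbb{F}_q$ when $q \equiv 1 \pmod 4$. Edge-transitivity will then follow by combining vertex-transitivity with edge-transitivity of $SP_q$ (Lemma~\ref{lem:PSP}) and the move-$\infty$ automorphism above. For antiautomorphism, I will take an antiautomorphism $\psi$ of $SP_q$ and define the map that acts as $\psi$ on finite vertices (preserving the $\pm 1$ index) and as the antitwin swap $\infty^i \mapsto \infty^{-i}$ on the $\infty$-copies; signs among finite edges flip because of $\psi$, while for an edge $\infty^i v^j$ of sign $ij$ the image has sign $(-i)j = -ij$, so every sign is reversed.

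For the three properties $P_{k, n}$, the plan is to reduce to a canonical configuration via vertex- and edge-transitivity and then count using the matching property of $SP_q$. $P_{1, q}$ is immediate by fixing $v = \infty^{+1}$: its $2q$ neighbors split as the $q$ positive $w^{+1}$ and $q$ negative $w^{-1}$ for $w \in V(SP_q)$. For $P_{2, (q-1)/2}$, I will fix an edge and case-split on whether the edge meets $\{\infty^{+1}, \infty^{-1}\}$, converting each count to a neighborhood count in $SP_q$ via $P_{1, (q-1)/2}$ and tracking the $\infty^{\pm 1}$ contribution. For $P_{3, (q-5)/4}$, I will fix a triangle and case-split by the number of its vertices lying in $\{\infty^{+1}, \infty^{-1}\}$, using $P_{2, (q-5)/4}$ of $SP_q$ to bound the number of common neighbors with prescribed signs.

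The main obstacle will be producing the explicit switching-style automorphism moving $\infty^{+1}$ to a finite vertex: this is the only step that requires genuine algebra in $\mathbb{F}_q$, and it underpins both vertex- and edge-transitivity. Once that automorphism is in hand, the remaining verifications are mechanical counting, with only the $P_3$ case split requiring a bit of care to exhaust all intersection patterns of a triangle with $\{\infty^{+1}, \infty^{-1}\}$.
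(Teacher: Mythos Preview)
The paper does not prove this lemma at all: it is quoted from \cite{OPS16} and stated without argument. So there is nothing to compare your proposal against on the paper's side.

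That said, your plan is sound and would yield a correct self-contained proof. A few remarks:

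\begin{itemize}
\item Your explicit automorphism moving $\infty^{+1}$ to $0^{+1}$ via $\sigma\colon x\mapsto x^{-1}$ on $\mathbb{F}_q^*$, $0\leftrightarrow\infty$, together with the switching $\epsilon(x)=\chi(x)$ on $\mathbb{F}_q^*$ (and $\epsilon(0)=\epsilon(\infty)=+1$), does satisfy the compatibility $\epsilon(u)\epsilon(v)\,s(\sigma(u)\sigma(v))=s(uv)$ on every edge; the key identity is $\chi(u^{-1}-v^{-1})=\chi(u-v)\chi(u)^{-1}\chi(v)^{-1}$ together with $\chi(-1)=1$. This is the only nontrivial step, and you have it right.
\item For edge-transitivity you do not actually need edge-transitivity of $SP_q$: once you know $\rho(SP_q^+)$ is vertex-transitive, it suffices to note that the stabilizer of $\infty^{+1}$ contains all lifted translations $x\mapsto x+c$, which act transitively on $\{w^{+1}:w\in\mathbb{F}_q\}$ (the positive neighbours) and on $\{w^{-1}:w\in\mathbb{F}_q\}$ (the negative neighbours).
\item Your case splits for $P_{2,(q-1)/2}$ and $P_{3,(q-5)/4}$ are unnecessary. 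By vertex-transitivity you may take one vertex to be $\infty^{+1}$; its antitwin $\infty^{-1}$ is then automatically excluded from the edge or triangle, so there is exactly one case. For $P_3$, after fixing $\infty^{+1}$ the remaining two vertices are $a^i,b^j$ with $a,b\in\mathbb{F}_q$, and the count of common neighbours with a prescribed sign vector reduces verbatim to the $P_{2,(q-5)/4}$ count in $SP_q$, as you indicate.
\end{itemize}

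So your outline is correct; it simply supplies a proof where the paper defers to the literature.
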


\section{State of the art and results}\label{sec:results}
As mentioned in the introductory section, the (sign-preserving) chromatic 
number of signed graphs has been studied extensively. Several papers are devoted to planar graphs, planar graphs with given girth, and graphs with 
bounded maximum average degree. 

In 2000, Ne\v set\v ril and Raspaud~\cite{nr00} considered the coloring of $(m,n)$-mixed-graphs (which is a super-class of signed graphs) and they 
proved that $\chi_{sp}(\mathcal{P}_3) \le 80$ by showing that any signed planar graph admits a sp-homomorphism to an antitwinned signed graph on $80$ vertices. This implies as a corollary that $\chi_s(\mathcal{P}_3) \le 
40$ by Lemma~\ref{lem:BG}. The same year, Montejano et al.~\cite{moprs10} 
constructed a signed planar graph $H$ such that $\chi_{sp}(H) = 20$, that implies $\chi_{sp}(\mathcal{P}_3) \ge 20$ and thus $\chi_s(\mathcal{P}_3) \ge 10$. The gap between the lower and upper bounds is huge and in 2020, Bensmail et al.~\cite{conjP} conjectured that $\chi_{sp}(\mathcal{P}_3) = 20$. Recently, Bensmail et al.~\cite{bdnpss20} proved that if this 
conjecture is true, then the target graph is necessarily $\rho(SP_9^+)$. Since this target graph is antitwinned, this would imply that $\chi_s(\mathcal{P}_3) = 10$. This question remains widely open. 

Coloring of sparse (planar) graphs have then been considered. In particular, the following results were obtained:
\begin{description}
\item[Girth $4$:] Ochem et al.~\cite{OPS16} proved that signed planar graphs of girth 4 admit a sp-homomorphism to $\rho(SP_{25})$, that is $\chi_{sp}(\mathcal{P}_4) \le 50$. They also proved that $\chi_{sp}(\mathcal{P}_4) \ge 12$. By Lemma~\ref{lem:BG}, we thus have $6 \le \chi_s(\mathcal{P}_4) \le 25$ since  $\rho(SP_{25})$ is antitwinned. Note that Bensmail et al.~\cite{bdnpss20} conjectured that $\chi_{sp}(\mathcal{P}_4) = 12$ and proved that if this conjecture is true, then the target graph is necessarily $\rho(SP_5^+)$. Since this target graph is antitwinned, this would imply that $\chi_s(\mathcal{P}_4) = 6$. 
\item[Girths $5$, $6$ and $8$ :] Montejano et al.~\cite{moprs10} proved that signed graphs with maximum average degree less than $\frac{10}3$ (resp. $3$, $\frac83$) admit a sp-homomorphism to $\rho(SP_9^+)$ (resp. $\rho(SP_5^+)$, $SP_9^-$), that is $\chi_{sp}(\mathcal{M}_{\frac{10}3}) \le 20$, $\chi_{sp}(\mathcal{M}_{3}) \le 12$ and $\chi_{sp}(\mathcal{M}_{\frac83}) \le 8$. By Claim~\ref{cl:mad-girth}, we get that  $\chi_{sp}(\mathcal{P}_{5}) \le 20$, $\chi_{sp}(\mathcal{P}_{6}) \le 12$ and $\chi_{sp}(\mathcal{P}_{8}) \le 8$. Moreover, since $\rho(SP_9^+)$ and $\rho(SP_5^+)$ are antitwinned, we get that $\chi_s(\mathcal{M}_{\frac{10}3}) \le 10$, $\chi_s(\mathcal{M}_{3}) \le 6$, $\chi_s(\mathcal{P}_{5}) \le 10$, and $\chi_s(\mathcal{P}_{6}) \le 6$ as a corollary by Lemma~\ref{lem:BG}.  Note that since $SP_9^-$ is not antitwinned, Lemma~\ref{lem:BG} does not apply and thus $\chi_s(\mathcal{M}_{\frac83}) \le 6$ and $\chi_s(\mathcal{P}_{8}) \le 6$ are the best known bounds. 
\item[Girth 9:] Charpentier et al.~\cite{cns20} proved that signed graphs with maximum average degree less than $\frac{18}7$ admit a homomorphism to the complete graph on $4$ vertices in which every edge is positive except one that is $\chi_s(\mathcal{M}_{\frac{18}{7}}) \ge 4$ and by Claim~\ref{cl:mad-girth}, $\chi_s(\mathcal{P}_9) \ge 4$. Since an unbalanced cycle of even length has chromatic number $4$~\cite{djmp20}, this bounds are tight. Note that by Lemma~\ref{lem:BG} these results imply $\chi_s(\mathcal{M}_{\frac{18}{7}}) \ge 8$ and $\chi_s(\mathcal{P}_9) \ge 8$ but we can already infer that from the bounds on $\chi_{sp}(\mathcal{M}_{\frac{8}{3}})$ and $\chi_{sp}(\mathcal{P}_8)$.
\item[Girth $g\ge 13$:] Borodin et al.~\cite{bkkw04} proved that for any $g\ge 13$, $\chi_{sp}(\mathcal{P}_g) = 5$.
\end{description}

\begin{table}
  \centering
  \begin{tabular}{|c|c|c|c|c|}
    \hline \hline
    Graph families & $\chi_s$ & $\chi_{sp}$ & Remarks & Refs.\\
    \hline \hline
    $\mathcal{P}_3$ & $10\le\chi_s\le 40$ & $20\le \chi_{sp}\le80$ & & \cite{moprs10,nr00}\\
    \hline
    $\mathcal{P}_4$ & $6\le \chi_s\le25$ & $12\le\chi_{sp}\le50$ & & \cite{OPS16}\\
    \hline
    $\mathcal{M}_{\frac{10}3}$ & $\chi_s\le 10$ & $\chi_{sp}\le 20$ & $\mathcal{P}_5 \subset \mathcal{M}_{\frac{10}3}$ & \cite{moprs10}\\
    \hline
    $\mathcal{M}_{3}$ & $\chi_s\le 6$ & $\chi_{sp}\le 12$ & $\mathcal{P}_6 \subset \mathcal{M}_{3}$ & \cite{moprs10}\\
    \hline
    $\mathcal{M}_{\frac83}$ & $\chi_s\le 6$ & $\chi_{sp}\le 8$ & $\mathcal{P}_8 \subset \mathcal{M}_{\frac83}$ & \cite{moprs10}\\
    \hline
    $\mathcal{M}_{\frac{18}7}$ & $\chi_s = 4$ & $\chi_{sp}\le 8$ & $\mathcal{P}_9 \subset \mathcal{M}_{\frac{18}7}$ & \cite{cns20}\\
    \hline
    $\mathcal{P}_{\ge 13}$ & $\chi_s = 4$ & $\chi_{sp}= 5$ & & \cite{bkkw04}\\
    \hline
    \hline                                    
  \end{tabular}
  \caption{Known results for (sp-)chromatic number of planar graphs with given girth and graphs with bounded maximum average degree.}
  \label{tab:known}
\end{table}

See Table~\ref{tab:known} for a summary.

In the same vein, the first author~\cite{j20} recently studied the chromatic number of signed triangular and hexagonal grids, which are subclasses 
of planar graphs. He respectively proved that 4 (resp. 10) colors are enough for hexagonal (resp. triangular) grids, supporting the conjecture that signed planar graphs have chromatic number at most 10. \medskip

In this paper, we try to find, given a target graph $T$, the highest possible value $m$ such that every graph with maximum average degree less than $m$ admits a homomorphism to $T$. We prove the following three theorems.

\begin{theorem}
\label{thm:SP5}
If a signed graph has maximum average degree smaller than $\frac{20}{7}$, 
it admits a homomorphism to $SP_{5}$. That is $\chi_s(\mathcal{M}_{\frac{20}7}) \le 5$.
\end{theorem}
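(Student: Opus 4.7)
The plan is to argue by contradiction using the discharging method. By Lemma~\ref{lem:BG}, it suffices to establish a sign-preserving homomorphism $G \xrightarrow{sp} \rho(SP_5)$, where $\rho(SP_5)$ has $10$ vertices and, by Lemma~\ref{lem:PrhoSP}, satisfies the properties $P_{1,4}$ and $P_{2,1}$. I would let $G$ be a counterexample with the minimum number of edges, so that every proper subgraph of $G$ admits such an sp-coloring.

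The first step is to derive a list of reducible configurations. A $1$-vertex is immediately reducible using $P_{1,4}$: remove it, sp-color the rest of $G$ by minimality, and extend via any of the four valid images. A $2$-vertex $v$ with neighbors $u_1, u_2$ is reducible whenever an sp-coloring $\varphi$ of $G-v$ can be found with $\varphi(u_1)$ and $\varphi(u_2)$ not antitwins, since $P_{2,1}$ then yields at least one valid image for $v$; when the antitwin coincidence is unavoidable, one must exploit switching (Theorem~\ref{thm:Z}) to re-route the partial coloring. I expect further reducible patterns to be needed, such as chains of consecutive $2$-vertices, pairs of $2$-vertices sharing a common $3$-vertex neighbor, and $3$-vertices surrounded exclusively by $2$-vertices, each established by removing an edge or a small subgraph, sp-coloring what remains, and using $P_{1,4}$ and $P_{2,1}$ together with switching to extend.

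Next I would run discharging with the initial charge $\mu(v) = 7d(v) - 20$, so that $\sum_v \mu(v) < 0$ follows directly from $\mad(G) < 20/7$. The redistribution rules would transfer positive charge from $3^+$-vertices toward the deficient $1$- and $2$-vertices (whose initial charges are $-13$ and $-6$), at rates calibrated so that every reducible configuration previously excluded forces enough charge to flow. If the rules are properly designed, the final charge $\mu^*(v)$ is nonnegative for every vertex $v$, contradicting the strict negativity of the total.

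The main obstacle I anticipate is the rigidity imposed by $P_{2,1}$: once both neighbors of a $2$-vertex have been sp-colored, the extension is essentially forced and fails whenever the neighbors happen to land on antitwinned vertices of $\rho(SP_5)$. Navigating around this through switching arguments on intermediate vertices is the subtle part of the reducibility analysis. Moreover the threshold $20/7$ is very tight --- a $3$-vertex carries only $1$ unit of initial charge while a $2$-vertex demands $6$ --- so the reducible configurations must be strong enough to force nearly every $3^+$-vertex to help absorb the deficits of low-degree vertices, and balancing these two pressures within the discharging is where the delicate bookkeeping lies.
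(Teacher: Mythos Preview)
Your framework matches the paper's exactly: reduce to an sp-homomorphism into $\rho(SP_5)$ via Lemma~\ref{lem:BG}, take a minimal counterexample, forbid configurations, then discharge with initial charge $d(v)-\tfrac{20}{7}$ (your $7d(v)-20$ is just a rescaling). Where your plan stays vague, however, is precisely where the work lies.

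The crucial device you are missing is a \emph{recursive} classification of $3$-vertices. The paper calls a $3$-vertex \emph{3-worse} if it has a $2$-neighbor, \emph{3-bad} if it has two 3-worse neighbors, and \emph{3-good} otherwise; it then proves (Claims~\ref{c:SP5-2}--\ref{c:SP5-3b}) that, after uncoloring, a $2$-neighbor forbids exactly one color from its other neighbor, a 3-worse neighbor forbids at most two, and a 3-bad neighbor at most one. These forbidden-color counts drive both the reducibility arguments (your list of patterns is not sharp enough: the paper must forbid, e.g., a $4$-vertex with three $2$-neighbors, and two adjacent $3$-vertices each with a bad neighbor) and the discharging, where $3^+$-vertices give $\tfrac{3}{7}$ to each $2$-neighbor but also $\tfrac{1}{7}$ to each 3-worse or 3-bad neighbor. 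Without this cascade of charge \emph{among} $3$-vertices, your own observation that a $3$-vertex has only one unit to give against a deficit of six per $2$-vertex is fatal: the rule ``$3^+$-vertices feed $2$-vertices'' alone cannot balance.

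Your idea of invoking Theorem~\ref{thm:Z} (switching) to escape antitwin collisions at $2$-vertices is not how the paper proceeds and would not obviously work. Instead one uses vertex-transitivity and the antiautomorphism of $\rho(SP_5)$ to normalize one color and one edge sign, then counts forbidden colors directly; the point is that a $2$-neighbor forbids \emph{one} color rather than two, because when the far neighbor has color $0$ and the two edge signs agree, the color $0$ itself remains available. Finally, the paper's configuration arguments assume the depicted vertices are pairwise distinct (it appeals to girth at least $7$ for this); your plan should account for possible coincidences as well.
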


As a corollary, this gives that $\chi_s(\mathcal{P}_7) \le 5$ and $\chi_{sp}(\mathcal{P}_7) \le 10$, which are new results that contribute to the above-mentioned collection of results.

\begin{theorem}
\label{thm:SP9+}
If a signed graph has maximum average degree smaller than $\frac{17}{5}$, 
it admits a homomorphism to $SP_{9}^+$.  That is $\chi_s(\mathcal{M}_{\frac{17}5}) \le 10$
\end{theorem}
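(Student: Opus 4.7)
By Lemma~\ref{lem:BG}, it suffices to produce, for every signed graph $G$ with $\mad(G) < \frac{17}{5}$, a sign-preserving homomorphism from $G$ to the $20$-vertex antitwinned target $\rho(SP_9^+)$, whose relevant extension properties are recorded in Lemma~\ref{lem:PTRSP}: vertex- and edge-transitivity, together with $P_{1,9}$, $P_{2,4}$, and $P_{3,1}$. The plan is the standard minimum-counterexample-plus-discharging scheme: take $G$ to be a signed graph in $\mathcal{M}_{17/5}$ of minimum order (breaking ties by number of edges) that admits no sp-homomorphism to $\rho(SP_9^+)$, and derive a contradiction by combining a list of forbidden local configurations with a discharging inequality on $\mad(G)$.

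In the reducibility phase, for each candidate local pattern $H$ in $G$ I would delete some vertices of $H$, invoke minimality to sp-color the resulting graph, and re-extend using the extension properties of $\rho(SP_9^+)$: $P_{1,9}$ immediately rules out $1$-vertices (nine choices); $P_{2,4}$ gives four extensions for an uncolored $2$-vertex, provided its two colored neighbors are not mapped to antitwins (an obstruction that can typically be removed by first switching a suitable vertex); and $P_{3,1}$ gives the unique extension at a triangle for a $3$-vertex whose three colored neighbors form a triangle in $\rho(SP_9^+)$. Iterating this yields a catalogue of forbidden configurations---chains of $2$-vertices, adjacent pairs of $3$-vertices, $3$-vertices whose neighborhoods overlap, and so on---strong enough to feed the subsequent discharging.

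For the discharging, assign to each vertex $v$ the initial charge $\mu(v) = d(v) - \frac{17}{5}$, so that $\sum_v \mu(v) = \left(\overline{d}(G) - \frac{17}{5}\right)|V(G)| < 0$ since $\overline{d}(G) \le \mad(G) < \frac{17}{5}$. Local rules then transfer charge from $4^+$-vertices (starting charge at least $\frac{3}{5}$) to the deficient $2$- and $3$-vertices (deficits $\frac{7}{5}$ and $\frac{2}{5}$ respectively), in such a way that every vertex finishes with nonnegative charge. Since discharging preserves the total charge, this contradicts the strict inequality above and finishes the proof.

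The main obstacle is the tightness of the $\frac{17}{5}$ threshold, which sits only narrowly above the earlier $\frac{10}{3}$ bound of Montejano et al.\ for the same target, so both the reducibility catalogue and the discharging rules must be nearly optimal. $3$-vertices, each carrying deficit only $\frac{2}{5}$, dominate the bookkeeping, and because $P_{3,1}$ provides just a single triangle-extension in the target, reductions around $3$-vertices afford essentially no slack: one typically has to forbid delicate sub-configurations such as two $3$-vertices sharing a common neighbor, or a $3$-vertex whose neighborhood is contained in the neighborhood of another $3$-vertex. I expect most of the technical effort to lie in calibrating such configurations and then verifying, by a careful case analysis on the degrees of first and second neighbors, that the discharging rules close with exactly the slack that $\mad(G) < \frac{17}{5}$ permits.
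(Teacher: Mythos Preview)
Your high-level scheme matches the paper's, but you miss the paper's central structural trick, and without it your plan as stated would not reach $\frac{17}{5}$.

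The paper does \emph{not} handle $3$-vertices via discharging at all; instead it shows that a minimal counterexample contains \emph{no} $3$-vertex whatsoever (Configuration $C_5$). The reduction is not a deletion: given a $3$-vertex $u$ with neighbors $v_1,v_2,v_3$, the paper replaces $u$ by three new $2$-vertices $u_1,u_2,u_3$ forming a $6$-cycle $v_1u_2v_3u_1v_2u_3v_1$, with signs copied from the original edges $uv_i$. A $\rho(SP_9^+)$-sp-coloring of the new graph forces any two $v_i,v_j$ with the same identity to impose identical constraints on $u$, so $P_{3,1}$ (or $P_{2,4}$) always extends back to $u$. Two things make this work that your proposal lacks. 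First, the replacement graph has \emph{more} vertices, so the paper uses a non-standard minimality order---fewest $3^+$-vertices, then fewest $2^-$-vertices---rather than your minimum-order-then-edges tiebreak; under your ordering the reduced graph is not smaller and the argument collapses. Second, one must check that the replacement preserves $\mad<\frac{17}{5}$, which the paper does by a short edge/vertex count.

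Once $3$-vertices are gone, the discharging is entirely between $2$-vertices and $4^+$-vertices, with a secondary notion of ``bad'' $4$- and $6$-vertices; the paper's rules send $\frac{7}{10}$ to each $2$-neighbor and $\frac{1}{10}$ to each bad neighbor. The resulting forbidden configurations $C_6$--$C_8$ and the discharging verification for degrees $4$ through $11$ are both done by computer case analysis (including Propositions on how many colors sets of bad neighbors can forbid). Your expectation that the hard work lies in calibrating $3$-vertex sub-configurations is therefore misdirected: the paper sidesteps that entirely, and the real technical weight is in the computer-assisted analysis of bad $4^+$-vertices.
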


This improves the result of Montejano et al.~\cite{moprs10} saying that $\chi_s(\mathcal{M}_{\frac{10}3}) \le 10$ since $\mathcal{M}_{\frac{10}3} \subset \mathcal{M}_{\frac{17}5}$.
Note that this result contributes to the conjecture that every planar graph admits a homomorphism to $SP_{9}^+$. 

It is not hard to see that signed graphs with maximum average degree at least $4$ have unbounded chromatic number. Consider a complete graph on $n$ vertices $v_1,\dots,v_n$, subdivide each edge $v_iv_j$ by adding a new vertex $u_{ij}$, and for each pair $i,j$, the $2$-path $v_i,u_{ij},v_j$ will have one positive and one negative edge. It easy to see that the average degree of this graph tends to $4$ when $n$ tends to infinity. Moreover, since each pair of $v_i$, $v_j$ is linked by a $2$-path formed by a negative and a positive edge, the $n$ initial vertices must have $n$ distinct colors. Therefore $\chi_{sp}(\mathcal{M}_4)$ is unbounded and thus $\chi_s(\mathcal{M}_4)$ is also unbounded by Proposition~\ref{prop:ineq-switch-2edge}. The following last result gives an upper bound of the chromatic number of signed graphs of maximum average degree $4-\varepsilon$ in function of~$\varepsilon$.
\begin{theorem}
\label{thm:SPq+}
Let $q>9$ be a prime power congruent to 1 modulo 4. If a signed graph has 
maximum average degree smaller than $4 - \frac{8}{q+3}$, it admits a homomorphism to $SP_{q}^+$. That is $\chi_s(\mathcal{M}_{4-\frac8{q+3}}) \le q+1$.
\end{theorem}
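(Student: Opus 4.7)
The plan is to prove Theorem~\ref{thm:SPq+} by a minimum-counterexample argument combined with discharging, in direct parallel with the proofs of Theorems~\ref{thm:SP5} and~\ref{thm:SP9+}. Suppose for contradiction that a counterexample exists, and let $G$ be one that minimizes $|V(G)|+|E(G)|$. By Lemma~\ref{lem:BG}, showing that $G$ maps to $SP_q^+$ is equivalent to showing $G\xrightarrow{sp}\rho(SP_q^+)$, and I would work with the latter formulation in order to exploit the full strength of Lemma~\ref{lem:PTRSP}: the target is vertex-transitive, edge-transitive, antiautomorphic, and satisfies $P_{1,q}$, $P_{2,(q-1)/2}$, and $P_{3,(q-5)/4}$. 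Basic minimality arguments give that $G$ is connected and has minimum degree at least~$2$.

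The first step is to isolate a list of reducible configurations that cannot appear in $G$. Following the template used for the earlier theorems in the paper, the prototypical forbidden configurations would be: pairs of adjacent $2$-vertices, short \emph{threads} (paths of $2$-vertices) joining $3^{+}$-vertices up to some length depending on $q$, and $3$-vertices whose neighbourhoods consist mostly of $2$-vertices. For each configuration the reducibility proof follows the same scheme: excise a carefully chosen set $W$ of low-degree vertices, sp-colour $G-W$ by minimality, and then extend the colouring to $W$ one vertex at a time, at each step using the appropriate $P_{k,n}$ property together with a switching argument at the endpoints to guarantee enough available colours. Because $(q-1)/2$ and $(q-5)/4$ grow with $q$, larger $q$ makes more and longer configurations reducible, which is exactly what drives the $q$-dependent threshold.

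With the forbidden list in hand, I would then run discharging. Assign to each vertex $v$ the initial charge $\omega(v)=d(v)-\bigl(4-\tfrac{8}{q+3}\bigr)$. Since $\mad(G)<4-\tfrac{8}{q+3}$, the total charge $\sum_{v}\omega(v)$ is strictly negative over any suitable subgraph. I would then set up discharging rules where every $3^{+}$-vertex sends a small amount of charge (a rational multiple of $\tfrac{1}{q+3}$) to each incident $2$- or $3$-vertex, with refinements to handle threads of several consecutive $2$-vertices. Using the absence of the reducible configurations to lower-bound the environment around every small-degree vertex, the goal is to verify that after redistribution every vertex carries non-negative charge, contradicting the sign of the total.

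The main obstacle is the joint calibration of the reducible set and the discharging rules so that the argument closes with exactly the threshold $4-\tfrac{8}{q+3}$ rather than something weaker. In particular, the reducibility of configurations centred at a $3$-vertex with three low-degree neighbours relies on $P_{3,(q-5)/4}$, and as the attached threads lengthen the number of colour constraints grows; balancing this against the charge sent by each $3^{+}$-vertex is what should pin down the numerator $8$ in the $\tfrac{8}{q+3}$ term. A secondary difficulty is to ensure that the argument is uniform in $q$, so that the same reducible list and discharging scheme work for every prime power $q>9$ with $q\equiv 1\pmod 4$; one expects a few small cases (where $(q-5)/4$ is still small) to require separate verification, which is presumably why the hypothesis $q>9$ appears.
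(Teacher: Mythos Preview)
Your overall framework (minimal counterexample, work with sp-homomorphisms to $\rho(SP_q^+)$, then discharge with initial charge $d(v)-(4-\tfrac{8}{q+3})$) matches the paper, but the heart of the argument is quite different from what you sketch, and the difference matters for getting the exact threshold.

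The decisive structural step you are missing is that \emph{every} $3$-vertex is reducible, not just $3$-vertices with low-degree neighbourhoods. The paper reuses the gadget reduction from Lemma~\ref{lem:TRSP9_C4}: delete the $3$-vertex $u$ with neighbours $v_1,v_2,v_3$ and add three new $2$-vertices forming a $6$-cycle $v_1u_2v_3u_1v_2u_3v_1$ with signs copied from the original edges. One checks that the modified graph is smaller (in a suitable ordering) and still has $\mad<4-\tfrac{8}{q+3}$, colours it by minimality, and then uses $P_{3,(q-5)/4}$ (or $P_{2}$/$P_{1}$ if two of the $v_i$ receive the same identity) to colour $u$. This kills $3$-vertices outright. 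Combined with the easy fact that two adjacent $2$-vertices are reducible, the minimum counterexample has only $2$-vertices and $4^+$-vertices, and every $2$-vertex has two $4^+$-neighbours. There are no threads at all, so your ``threads of length depending on $q$'' and your configurations ``centred at a $3$-vertex'' never arise.

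With $3$-vertices gone, the remaining reducible configurations are simply: a $d$-vertex all of whose neighbours are $2$-vertices with $d<2q+2$; a $d$-vertex with one $4^+$-neighbour and the rest $2$-vertices with $d<q+1$; two $4^+$-neighbours and $d<\tfrac{q+3}{2}$; three $4^+$-neighbours and $d<\tfrac{q+7}{4}$. Each uses the corresponding property $P_{0}$, $P_{1,q}$, $P_{2,(q-1)/2}$, $P_{3,(q-5)/4}$ plus the fact that a $2$-neighbour forbids exactly one colour. The discharging is then a single rule: every $4^+$-vertex sends $\tfrac{q-1}{q+3}$ to each $2$-neighbour. The four reducible configurations are precisely what make the final charge nonnegative in the four degree ranges $[4,\tfrac{q+7}{4})$, $[\tfrac{q+7}{4},\tfrac{q+3}{2})$, $[\tfrac{q+3}{2},q+1)$, $[q+1,2q+2)$, and this calibration is what produces the $\tfrac{8}{q+3}$. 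Your proposed route via threads and partially-bad $3$-vertices would not obviously close at this threshold, and the ``joint calibration'' you flag as the main obstacle is exactly the place where the argument would founder without the $3$-vertex elimination.
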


\section{Proof techniques}\label{sec:proof-tech}

Since it is easier to manipulate sp-homomorphisms than homomorphisms, we will prove in the remaining four sections that any graph $G\in \mathcal{M}_{\frac{20}7}$ (resp.  $\mathcal{M}_{\frac{17}5}$, $\mathcal{M}_{4-\frac8{q+3}}$) admits an sp-homomorphism to $\rho(SP_5)$ (resp. $\rho(SP_9^+)$, $\rho(SP_{q}^+)$). Theorems~\ref{thm:SP5} to~\ref{thm:SPq+} will then be obtained as corollaries of Lemma~\ref{lem:BG}.

We prove our theorems by contradiction, by assuming that they
have counterexamples. Among all of these counterexamples, we take a
graph $G$ with the fewest number of vertices. Our goal is to prove that
$G$ satisfies structural properties incompatible with having a maximum average degree smaller than a certain value, hence the conclusion.\newline

For each theorem we start by introducing sets of so-called
forbidden configurations, which by minimality $G$ cannot
contain. We then strive to reach a
contradiction with the bounded maximum average degree. To do so, we use the
discharging method. This means that we give some initial weight to
vertices of $G$, then we redistribute those weights, and obtain a
contradiction by double counting the total weight. We present appropriate 
collections of discharging
rules, and argue that every vertex of
$G$ ends up with non-negative weight while the total initial weight
was negative.

Finding a contradiction using a set of forbidden configurations is an
application of a well-known tool called the discharging method. This
approach was introduced more than a century ago to
study the Four-Color Conjecture~\cite{W04}, now a theorem. It is especially
well-suited for studying sparse graphs, and leads to many results, as
shown in two recent surveys~\cite{B13,CW17}. Note that the
discharging method only allows us to reduce the proofs to showing that some configurations are
forbidden. 

In order to prove that $G$ cannot contain a configuration $C_i$, we need to find a suitable smaller graph $G'$ and to extend any
coloring of $G'$ to $G$. Since $G$ is a minimum
counterexample, we get a contradiction if $G$ contains $C_i$.

\section{Proof of Theorem \ref{thm:SP5}}\label{sec:thSP5}
In this section, we prove that any signed graph of maximum average degree 
less than $\frac{20}{7}$ and girth $7$ admit a $\rho(SP_5)$-sp-coloring. To do so, we suppose that this theorem is false and we consider in the remainder of this section a minimal counter-example $G$ w.r.t. its order: it is a smallest signed graph with $\mad(G)<\frac{20}7$ and girth admitting no $\rho(SP_5)$-sp-coloring.

\begin{figure}[H]
	\centering
	\scalebox{0.7}
	{
		\begin{tikzpicture}[thick]
		\def \radius {3cm}
		\def \margin {8} 
		\tikzstyle{vertex}=[circle,minimum width=2.5em]
		
		\node[draw, vertex] (0) at (-72*0+90: 2) {$0$};
		\node[draw, vertex] (1) at (-72*1+90: 2) {$1$};
		\node[draw, vertex] (2) at (-72*2+90: 2) {$2$};
		\node[draw, vertex] (3) at (-72*3+90: 2) {$3$};
		\node[draw, vertex] (4) at (-72*4+90: 2) {$4$};
		
		\draw (0) -- (1);
		\draw (1) -- (2);
		\draw (2) -- (3);
		\draw (3) -- (4);
		\draw (4) -- (0);
		
		\draw[dashed] (0) -- (2);
		\draw[dashed] (1) -- (3);
		\draw[dashed] (2) -- (4);
		\draw[dashed] (3) -- (0);
		\draw[dashed] (4) -- (1);
		
		\end{tikzpicture}
		
	}
	\caption{$SP_5$, the signed Paley graph on 5 vertices.}
	\label{fig:SP5}
\end{figure}
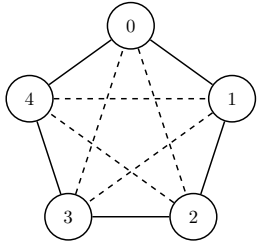

We first introduce some notation in
order to simplify the statements of configurations and rules.

We say that a 3-vertex $v$ is:
\begin{itemize}
\item[$\bullet$] \emph{3-worse} if it has one 2-neighbor (note that by Configuration $C_{\ref{SP5C3}}$ a 3-vertex cannot have more than one 2-neighbor).
\item[$\bullet$] \emph{3-bad} if it has two 3-worse-neighbors (note that by Configuration $C_{\ref{SP5C3}}$ a 3-vertex cannot have three 3-worse-neighbors and by configuration $C_{\ref{SP5C2}}$ a 3-bad cannot be 3-worse).
\item[$\bullet$] \emph{3-good} otherwise.
\end{itemize}

We will say that 3-bad-vertices and 2-vertices are \emph{bad vertices}.

\subsection{Forbidden configurations}
\label{sec:SP5reduction}

We define several configurations $C_0,\ldots,C_{5}$ as follows (see Figure~\ref{fig:SP5configs}).

\begin{itemize}
\item[$\bullet$] $\configSPfp{SP5C0}$ is a 0-vertex.
\item[$\bullet$] $\configSPfp{SP5C1}$ is a 1-vertex.
\item[$\bullet$] $\configSPfp{SP5C2}$ is two adjacent bad vertices.
\item[$\bullet$] $\configSPfp{SP5C3}$ is a 3-vertex with two bad neighbors.
\item[$\bullet$] $\configSPfp{SP5C4}$ is a 3-vertex with one bad neighbor 
adjacent to another 3-vertex with one bad neighbor.
\item[$\bullet$] $\configSPfp{SP5C5}$ is a 4-vertex with three 2-neighbors.
\end{itemize}

Note that every pair of vertices represented in Figure~\ref{fig:SP5configs} is distinct since otherwise $G$ would not have girth at least $7$.

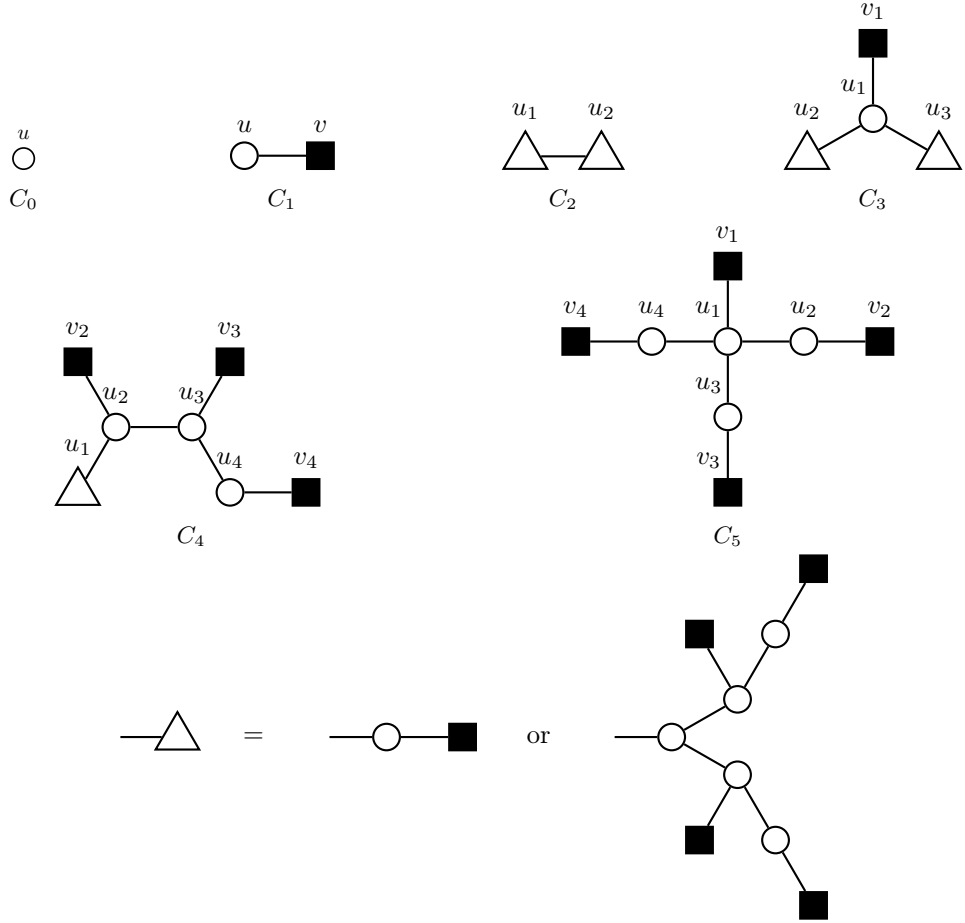
\begin{figure}[H]
    \centering
    \begin{subfigure}[b]{0.2\textwidth}
        \centering 
            \scalebox{0.8}
	        {
                \begin{tikzpicture}[thick]
                    \tikzstyle{vertex}=[circle,minimum width=1em]
                    \tikzstyle{vertex2}=[rectangle, minimum width=1em, minimum height=1em]
        		
        		    \node[draw, vertex] (1) at (0,0) {};
        		    \draw (1.north) node[above]{$u$};
        		    
                \end{tikzpicture} 
            }
        \caption*{$C_0$} 
    \end{subfigure} 
    \begin{subfigure}[b]{0.24\textwidth} 
        \centering 
            \scalebox{1}
	        {
                \begin{tikzpicture}[thick]
                    \tikzstyle{vertex}=[circle,minimum width=1em]
                    \tikzstyle{vertex2}=[rectangle, minimum width=1em, minimum height=1em]
        		
        		    \node[draw, vertex] (1) at (0,0) {};
        		    \node[draw, vertex2, fill] (2) at (1,0) {};
        		    
        		    \draw (1.north) node[above]{$u$};
        		    \draw (2.north) node[above]{$v$};
        		    
        		    \draw (1) -- (2);
                \end{tikzpicture} 
            }
        \caption*{$C_1$} 
    \end{subfigure}
    \begin{subfigure}[b]{0.24\textwidth} 
        \centering 
            \scalebox{1}
	        {
                \begin{tikzpicture}[thick]
                    \tikzstyle{vertex}=[circle,minimum width=1em]
                    \tikzstyle{vertex2}=[rectangle, minimum width=1em, minimum height=1em, fill]
                    \tikzstyle{vertex3}=[regular polygon, regular polygon sides=3, minimum size=1em]
        		
        		    \node[draw, vertex3] (1) at (0,0) {};
        		    \node[draw, vertex3] (2) at (1,0) {};
        		    
        		    \draw (1.north) node[above]{$u_1$};
        		    \draw (2.north) node[above]{$u_2$};
        		    
        		    \draw (1) -- (2);
                \end{tikzpicture} 
            }
        \caption*{$C_2$} 
    \end{subfigure} 
    \begin{subfigure}[b]{0.29\textwidth} 
        \centering 
            \scalebox{1}
	        {
                \begin{tikzpicture}[thick]
                    \tikzstyle{vertex}=[circle,minimum width=1em]
                    \tikzstyle{vertex2}=[rectangle, minimum width=1em, minimum height=1em, fill]
                    \tikzstyle{vertex3}=[regular polygon, regular polygon sides=3, minimum size=1em]
        		
        		    \node[draw, vertex] (0) at (0:0) {};
        		    \node[draw, vertex3] (1) at (0-30:1) {};
        		    \node[draw, vertex2] (3) at (120-30:1) {};
        		    \node[draw, vertex3] (4) at (240-30:1) {};
        		    
        		    \draw (0.north) node[above, shift={(-0.25,0)}]{$u_1$};
        		    \draw (1.north) node[above]{$u_3$};
        		    \draw (3.north) node[above]{$v_1$};
        		    \draw (4.north) node[above]{$u_2$};
        		    
        		    \draw (0) -- (1);
        		    \draw (0) -- (3);
        		    \draw (0) -- (4);
                \end{tikzpicture}  
            }
        \caption*{$C_3$} 
    \end{subfigure}
    
    \centering
    
    \begin{subfigure}[b]{0.44\textwidth} 
        \centering 
            \scalebox{1}
	        {
                \begin{tikzpicture}[thick]
                    \tikzstyle{vertex}=[circle,minimum width=1em]
                    \tikzstyle{vertex2}=[rectangle, minimum width=1em, minimum height=1em, fill]
                    \tikzstyle{vertex3}=[regular polygon, regular polygon sides=3, minimum size=1em]
        		
        		    \node[draw, vertex] (0) at (0:0) {};
        		    \node[draw, vertex2] (1) at (120:1) {};
        		    \node[draw, vertex3] (2) at (-120:1) {};
        		    
        		    \node[draw, vertex, shift={(1,0)}] (0') at (0:0) {};
        		    \node[draw, vertex2, shift={(1,0)}] (1') at (60:1) {};
        		    \node[draw, vertex, shift={(1,0)}] (2') at (-60:1) {};
        		    \node[draw, vertex2, shift={(2,0)}] (3') at (-60:1) {};
        		    
        		    \draw (0.north) node[above]{$u_2$};
        		    \draw (1.north) node[above]{$v_2$};
        		    \draw (2.north) node[above]{$u_1$};
        		    
        		    \draw (0'.north) node[above]{$u_3$};
        		    \draw (1'.north) node[above]{$v_3$};
        		    \draw (2'.north) node[above]{$u_4$};
        		    \draw (3'.north) node[above]{$v_4$};
        		    
        		    \draw (0) -- (1);
        		    \draw (0) -- (2);
        		    
        		    \draw (0) -- (0');
        		    
        		    \draw (0') -- (1');
        		    \draw (0') -- (2');
        		    \draw (3') -- (2');
        		    
                \end{tikzpicture}   
            } 
        \caption*{$C_4$} 
    \end{subfigure} 
    \begin{subfigure}[b]{0.49\textwidth} 
        \centering 
            \scalebox{1}
	        {
                \begin{tikzpicture}[thick]
                    \tikzstyle{vertex}=[circle,minimum width=1em]
                    \tikzstyle{vertex2}=[rectangle, minimum width=1em, minimum height=1em, fill]
        		
        		    \node[draw, vertex] (0) at (0:0) {};
        		    \node[draw, vertex] (1) at (0:1) {};
        		    \node[draw, vertex] (2) at (-90:1) {};
        		    \node[draw, vertex] (3) at (-180:1) {};
        		    \node[draw, vertex2] (1') at (0:2) {};
        		    \node[draw, vertex2] (2') at (-90:2) {};
        		    \node[draw, vertex2] (3') at (-180:2) {};
        		    \node[draw, vertex2] (4) at (-270:1) {};
        		    
        		    \draw (0.north) node[above, shift={(-0.25,0)}]{$u_1$};
        		    \draw (4.north) node[above]{$v_1$};
        		    
        		    \draw (1.north) node[above]{$u_2$};
        		    \draw (1'.north) node[above]{$v_2$};
        		    
        		    \draw (2.north) node[above, shift={(-0.25,0)}]{$u_3$};
        		    \draw (2'.north) node[above, shift={(-0.25,0)}]{$v_3$};
        		    
        		    \draw (3.north) node[above]{$u_4$};
        		    \draw (3'.north) node[above]{$v_4$};
        		    
        		    \draw (0) -- (1);
        		    \draw (0) -- (2);
        		    \draw (0) -- (3);
        		    \draw (0) -- (4);
        		    \draw (1) -- (1');
        		    \draw (2) -- (2');
        		    \draw (3) -- (3');
                \end{tikzpicture}   
            } 
        \caption*{$C_5$} 
    \end{subfigure}
    
    \centering
    
    \begin{subfigure}[b]{0.99\textwidth} 
        \centering 
            \scalebox{1}
	        {
                \begin{tikzpicture}[thick]
                    \tikzstyle{vertex}=[circle,minimum width=1em]
                    \tikzstyle{vertex2}=[rectangle, minimum width=1em, minimum height=1em, fill]
                    \tikzstyle{vertex3}=[regular polygon, regular polygon sides=3, minimum size=1em]
        		
        		    \node[draw, vertex3] (tr) at (0, 0) {};
        		    \draw (tr) -- (-0.75, 0);
        		    
        		    \node (eq) at (1, 0) {$=$};
        		    
        		    \node[draw, vertex] (2) at (2.75, 0) {};
        		    \node[draw, vertex2] (2') at (3.75, 0) {};
        		    \draw (2) -- (-0.75+2.75, 0);
        		    \draw (2) -- (2');
        		    
        		    \node (or) at (4.75, 0) {or};
        		    
        		    \node[draw, vertex, shift={(6,0)}] (b) at (0.5, 0) {};
        		    
        		    \node[draw, vertex, shift={(7.366,-0.5)}] (b0) at (0-90:0) {};
        		    \node[draw, vertex2, shift={(7.366,-0.5)}] (b1) at (-120:1) {};
        		    \node[draw, vertex, shift={(7.366,-0.5)}] (b2) at (120-180:1) {};
        		    \node[draw, vertex2, shift={(7.366,-0.5)}] (b3) at (120-180:2) {};
        		    
        		    \node[draw, vertex, shift={(7.366,0.5)}] (b0') at (0-90:0) {};
        		    \node[draw, vertex2, shift={(7.366,0.5)}] (b1') at (-60+180:1) {};
        		    \node[draw, vertex, shift={(7.366,0.5)}] (b2') at (60:1) {};
        		    \node[draw, vertex2, shift={(7.366,0.5)}] (b3') at (60:2) 
{};

        		    \draw (b0) -- (b1);
        		    \draw (b0) -- (b2);
        		    \draw (b2) -- (b3);
        		    
        		    \draw (b0) -- (b);
        		    \draw (b0') -- (b);
        		    
        		    \draw (b0') -- (b1');
        		    \draw (b0') -- (b2');
        		    \draw (b2') -- (b3');
        		    
        		    \draw (5.75, 0) -- (b);
        		    
                \end{tikzpicture}   
            } 
    \end{subfigure} 
    \caption{Forbidden configurations. Square vertices can be of any degree. White vertices will be removed while proving the non-existence of the 
configuration. Triangles are bad vertices: 2-vertices or 3-bad-vertices.} 

    \label{fig:SP5configs}
\end{figure}

We prove that configurations $C_0$ to $C_5$ are forbidden in $G$. To this 
end, we first prove some
general results we use to prove that the configurations are forbidden. Remember that $\rho(SP_5)$ is vertex-transitive, antiautomorphic and has Properties $P_{1,4}$ and $P_{2, 1}$ by Lemma~\ref{lem:PrhoSP}. \medskip

Consider a signed graph $H$, a vertex $v$ of $H$ of degree $k$, its $k$ neighbors $u_1,u_2,\dots,u_k$. Let $H'=H - v$ and suppose there exists a 
sp-homomorphism $\varphi': H' \xrightarrow{sp} \rho(SP_5)$. With the aim of extending $\varphi'$ to an sp-homomorphism $\varphi$ of the whole graph $H$ we can compute the number of colors forbidden for $v$ by each of its neighbors $u_i$. If we are able to prove that at most $9$ colors are forbidden for $v$, then this means that $\varphi'$ can be extended to an sp-homomorphism $\varphi$ of the whole graph $H$. Note that we may need to recolor some vertices already colored by $\varphi'$. We denote the signature of $H$ by $s_H$. We prove the following claims to this end:

\begin{claim}
\label{c:SP5-2}
2-neighbors forbid one color.
\end{claim}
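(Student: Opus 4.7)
The plan is to analyze, for a single $2$-neighbor $u_i$ of $v$ with other neighbor $w$, which colors $c \in V(\rho(SP_5))$ for $v$ cannot be realized by some extension of $\varphi'$ (allowing $u_i$ to be recolored). Because $u_i$ has exactly the two neighbors $v$ and $w$ in $H$, the only constraints on a new color $x$ for $u_i$ are that $x$ is adjacent to $\varphi(w)$ with sign $s_H(u_iw)$ and adjacent to $c$ with sign $s_H(u_iv)$. The question therefore reduces to: for how many $c$ does no such $x$ exist?

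The main case split is on whether $c$ and $\varphi(w)$ are adjacent in $\rho(SP_5)$. Since $SP_5$ is a signed complete graph, the only non-edges of $\rho(SP_5)$ are between antitwin pairs, so $\{\varphi(w),c\}$ fails to be a $2$-clique only when $c \in \{\varphi(w),\overline{\varphi(w)}\}$. For every other $c$, property $P_{2,1}$ of $\rho(SP_5)$ from Lemma~\ref{lem:PrhoSP} supplies a common signed neighbor $x$ of $\varphi(w)$ and $c$ with the two prescribed signs. So no color outside $\{\varphi(w),\overline{\varphi(w)}\}$ is forbidden, and it remains to examine these two candidates.

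For $c=\varphi(w)$, any $x$ is joined to $\varphi(w)$ by a single edge whose sign must equal both $s_H(u_iw)$ and $s_H(u_iv)$; this is possible iff the two signs agree, and when they do property $P_{1,4}$ produces the required $x$. For $c=\overline{\varphi(w)}$, the identity $s_{\rho(SP_5)}(x,\overline{\varphi(w)})=-s_{\rho(SP_5)}(x,\varphi(w))$, which follows directly from the rule $s_{\rho(G)}(u^iv^j)=i\cdot j\cdot s_G(u,v)$, forces the two signs to be opposite, so $x$ exists iff $s_H(u_iw)\neq s_H(u_iv)$, again by $P_{1,4}$. In every situation, exactly one of the two antitwin candidates $\varphi(w)$ and $\overline{\varphi(w)}$ is forbidden, which gives the claim.

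The only delicate step is the antitwin sign identity; beyond that, the argument is a clean application of the structural properties $P_{1,4}$ and $P_{2,1}$ already established for $\rho(SP_5)$. I do not expect any genuine obstacle, since the reasoning is a short case analysis driven by Lemma~\ref{lem:PrhoSP} and the definition of $\rho$.
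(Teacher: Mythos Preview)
Your proposal is correct and follows essentially the same route as the paper's proof: both arguments uncolor $u_i$, observe that every color $c\notin\{\varphi(w),\overline{\varphi(w)}\}$ is admissible for $v$ via Property~$P_{2,1}$ of $\rho(SP_5)$, and then check the two remaining candidates using $P_{1,4}$ together with the sign relation between antitwins to conclude that precisely one of $\varphi(w)$, $\overline{\varphi(w)}$ is forbidden. The only cosmetic difference is that the paper normalizes $\varphi'(w)=0$ by vertex-transitivity, whereas you argue abstractly; the logical content is identical.
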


\begin{claim}
\label{c:SP5-3w}
3-worse-neighbors forbid at most two colors.
\end{claim}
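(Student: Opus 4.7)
The plan is to analyse which colors $c_v \in V(\rho(SP_5))$ cannot be extended to a valid sp-coloring even after recoloring $u$ and its 2-vertex neighbor $w$. Let $x$ denote the third neighbor of $u$ (besides $v$ and $w$) and let $w'$ denote the other neighbor of $w$. Since $G$ has girth at least $7$, the vertices $v, u, w, x, w'$ are pairwise distinct and the edges $uv$, $uw$, $ux$, $ww'$ are distinct. Fix an sp-coloring $\varphi'$ of $H-v$.

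I would first identify the candidate recolorings of $u$. Because $c_x$ is fixed, $c_u'$ must lie in $S_u := N_{s(ux)}(c_x)$, which has exactly $4$ elements by property $P_{1,4}$ of $\rho(SP_5)$ (Lemma~\ref{lem:PrhoSP}). Next, for each $c_u' \in S_u$, I would check whether $w$ can be recolored into some $c_w'$ that is simultaneously $s(uw)$-adjacent to $c_u'$ and $s(ww')$-adjacent to $c_{w'}$. When $c_u' \notin \{c_{w'}, \overline{c_{w'}}\}$, property $P_{2,1}$ guarantees at least one such $c_w'$; in the two exceptional cases, a direct check shows that only one of $c_{w'}$ and $\overline{c_{w'}}$ admits a valid $c_w'$, the choice being determined by whether $s(uw) = s(ww')$. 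Hence at most one element of $S_u$ has to be discarded, leaving a "good" subset $S_u^g \subseteq S_u$ with $|S_u^g| \geq 3$.

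The set of colors still available for $v$ is then $\bigcup_{c \in S_u^g} N_{s(uv)}(c)$, and the goal becomes showing that this union contains at least $8$ of the $10$ vertices of $\rho(SP_5)$. The step I expect to be the main obstacle is precisely this combinatorial count. I would proceed by first establishing that $\bigcup_{c \in S_u} N_{s(uv)}(c)$ already has exactly $9$ elements, missing only $\overline{c_x}$ when $s(uv) = s(ux)$ (or $c_x$ otherwise), since no vertex can be an $s(uv)$-neighbor of some $c \in N_{s(ux)}(c_x)$ while being equal or antitwin to $c_x$ with the wrong combined sign. Then I would verify that each element of $S_u$ has exactly one "private" vertex inside this union, so that discarding one element costs only one further vertex, producing a union of size at least $8$. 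By the vertex-transitivity and antiautomorphy of $\rho(SP_5)$ (Lemma~\ref{lem:PrhoSP}), this verification reduces to a single representative choice of $c_x$ and of $s(uv), s(ux)$, where the private-vertex structure can be read off directly from the neighborhood description of $\rho(SP_5)$.
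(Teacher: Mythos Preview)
Your proposal is correct and follows essentially the same route as the paper. Both arguments normalise by vertex-transitivity and antiautomorphy so that $c_x=0$ and $s(ux)=+1$, obtain the four candidate colors $S_u=\{1,\overline{2},\overline{3},4\}$ for $u$, observe that the $2$-vertex $w$ rules out at most one of them (the paper cites Claim~\ref{c:SP5-2} here while you re-derive the fact via $P_{2,1}$), and finally count the colors left for~$v$. The only stylistic difference is this last count: the paper runs through the five possibilities for the forbidden color $f$ and lists the two excluded colors for $v$ explicitly in each case, whereas you argue once that $\bigcup_{c\in S_u}N_{s(uv)}(c)$ has nine elements and that each $c\in S_u$ owns exactly one private element of that union, so deleting one $c$ drops the union to eight. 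Your version trades a short case split for a structural observation, but the underlying computation and the use of the symmetries of $\rho(SP_5)$ are identical.
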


\begin{claim}
\label{c:SP5-3b}
3-bad-neighbors forbid at most one color.
\end{claim}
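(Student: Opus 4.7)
My approach parallels the proofs of Claims~\ref{c:SP5-2} and~\ref{c:SP5-3w}: I exploit the two 3-worse branches of $u$ as reservoirs of coloring flexibility and count how many of the $10$ colors of $\rho(SP_5)$ remain available for $v$. Let the three neighbors of $u$ be $v$, $u_1$, $u_2$; since $u$ is 3-bad, at least two of these are 3-worse. Up to handling a symmetric sub-case (in which $v$ itself is one of the two 3-worse neighbors of $u$), I may assume that $u_1$ and $u_2$ are both 3-worse, so each $u_i$ has a 2-vertex neighbor $x_i$ (with other endpoint $z_i$) and a third neighbor $w_i$. The girth-$7$ hypothesis guarantees that all the vertices $v, u, u_1, u_2, w_1, w_2, x_1, x_2, z_1, z_2$ are pairwise distinct. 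I freeze $\varphi'$ on $H' \setminus \{u, u_1, u_2, x_1, x_2\}$ and allow these five inner vertices to be recolored.

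First, I apply Claim~\ref{c:SP5-2} to the 2-neighbor $x_i$ of $u_i$: under the fixed constraint coming from $z_i$, $x_i$ forbids at most one color for $u_i$. Combined with the constraint from $w_i$, which forces $\varphi(u_i) \in N^{s(u_iw_i)}(\varphi'(w_i))$, a set of $4$ colors, the set $A_i$ of colors still available for $u_i$ satisfies $|A_i| \ge 3$.

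Second, a color $c \in V(\rho(SP_5))$ is realizable as $\varphi(v)$ exactly when there exist $x = \varphi(u) \in N^{s(uv)}(c)$ together with $a_i \in A_i$ for $i=1,2$ such that $x \in N^{s(uu_1)}(a_1) \cap N^{s(uu_2)}(a_2)$. Hence the set of realizable colors for $v$ equals
\[
  \bigcup_{a_1 \in A_1,\; a_2 \in A_2} \ \bigcup_{x \in N^{s(uu_1)}(a_1) \cap N^{s(uu_2)}(a_2)} N^{s(uv)}(x).
\]
Using that $|N^{\alpha}(a) \cap N^{\beta}(b)| \in \{1, 2\}$ for every pair of adjacent vertices $a, b$ in $\rho(SP_5)$ (verified directly on the 10-vertex graph, and consistent with the property $P_{2,1}$ from Lemma~\ref{lem:PrhoSP}), I argue that this union misses at most one vertex of $\rho(SP_5)$.

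The main obstacle is that the lower bound $|A_i| \ge 3$ is only barely sufficient: one has to rule out pathological configurations of $(A_1, A_2)$ inside $\rho(SP_5)$ in which more than one color $c$ becomes unreachable. To handle this, I invoke the vertex-transitivity and antiautomorphism of $\rho(SP_5)$ (Lemma~\ref{lem:PrhoSP}) to reduce the question to a short list of canonical positions for the pair $(A_1, A_2)$ together with the sign pattern on the edges $uv, uu_1, uu_2$; each canonical case can then be dispatched by direct inspection of $\rho(SP_5)$. The at-most-one forbidden color that remains is, analogously to the 2-neighbor analysis, a color dictated by $\varphi'(w_1), \varphi'(w_2)$ and the signs carried along the two branches.
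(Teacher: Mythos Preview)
Your setup is correct, and your displayed union is indeed the set of colors realizable for $v$. What you are missing is that this double union collapses. Writing
\[
B \;:=\; \bigcup_{a_1\in A_1,\;a_2\in A_2}\Bigl( N^{s(uu_1)}(a_1)\cap N^{s(uu_2)}(a_2)\Bigr)
   \;=\;\Bigl(\bigcup_{a_1\in A_1} N^{s(uu_1)}(a_1)\Bigr)\cap\Bigl(\bigcup_{a_2\in A_2} N^{s(uu_2)}(a_2)\Bigr),
\]
each factor on the right is exactly the complement of the colors that the 3-worse vertex $u_i$ forbids from $u$, and by Claim~\ref{c:SP5-3w} (already established) each such factor has size at least $8$. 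Hence $|B|\ge 6$, and the set of realizable colors for $v$ is simply $N^{s(uv)}(B)$. The problem is now one-parameter. The paper analyses $B$ directly: since $V(\rho(SP_5))$ splits into two copies of $SP_5$ on five vertices each, either $B$ meets some copy in at least four vertices (and then every color of $v$ is realizable), or $B$ meets each copy in exactly three vertices, in which case a two-case check (three consecutive residues modulo $5$ versus not) shows that at most one color of $v$ is missed.

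Your proposed route through a case analysis on canonical pairs $(A_1,A_2)$ together with the three edge signs would eventually reach the same conclusion, but the number of cases is substantially larger, and you have only asserted rather than carried out that each one closes. Invoking Claim~\ref{c:SP5-3w} is precisely the device that eliminates $A_1$ and $A_2$ from the bookkeeping and reduces everything to the single set $B$. As a side remark, the ``symmetric sub-case'' you flag does not actually arise: in every application of this claim $v$ is the neighbor of $u$ that is \emph{not} 3-worse, so $u_1$ and $u_2$ are always the two 3-worse branches.
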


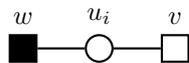
\begin{figure}[H]
    \centering 
        \scalebox{1}
	       {
            \begin{tikzpicture}[thick]
                \tikzstyle{vertex}=[circle,minimum width=1em]
                \tikzstyle{vertex2}=[rectangle, minimum width=1em, minimum height=1em, fill]
                \tikzstyle{vertex3}=[rectangle, minimum width=1em, minimum height=1em]
        	
        	    \node[draw, vertex] (1) at (0,0) {};
        	    \node[draw, vertex3] (2) at (1,0) {};
        	    \node[draw, vertex2] (0) at (-1,0) {};
        	    
        	    \draw (1.north) node[above]{$u_i$};
        	    \draw (2.north) node[above]{$v$};
        	    \draw (0.north) node[above]{$w$};
        	    
        	    \draw (0) -- (1);
        	    \draw (1) -- (2);
            \end{tikzpicture} 
        }
    \caption{Vertex $v$ is adjacent to a 2-vertex.} 
    \label{fig:SP52forbiddenb}
\end{figure} 

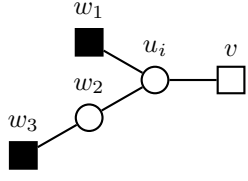
\begin{figure}[H]
    \centering 
        \scalebox{1}
	       {
            \begin{tikzpicture}[thick]
                \tikzstyle{vertex}=[circle,minimum width=1em]
                \tikzstyle{vertex2}=[rectangle, minimum width=1em, minimum height=1em, fill]
                \tikzstyle{vertex3}=[rectangle, minimum width=1em, minimum height=1em]
        	
        	    \node[draw, vertex] (3) at (-1, 0) {};
        	    
        	    \draw (3.north) node[above]{$u_i$};
        	    
        	    \node[draw, vertex2, shift={(-1,0)}] (a1) at (150:1) {};
        	    \node[draw, vertex, shift={(-1,0)}] (a2) at (-150:1) {};
        	    \node[draw, vertex2, shift={(-1,0)}] (a3) at (-150:2) {};
        	    
        	    \draw (a1.north) node[above]{$w_1$};
        	    \draw (a2.north) node[above]{$w_2$};
        	    \draw (a3.north) node[above]{$w_3$};
        	    
        	    \draw (3) -- (a1);
        	    \draw (3) -- (a2);
        	    \draw (a2) -- (a3);
        	    
        	    \node[draw, vertex3] (4) at (0, 0) {};
        	    \draw (4.north) node[above]{$v$};
        	    \draw (3) -- (4);
        	    
            \end{tikzpicture} 
        }
    \caption{Vertex $v$ is adjacent to a 3-worse-vertex.} 
    \label{fig:SP53wforbiddenb}
\end{figure} 

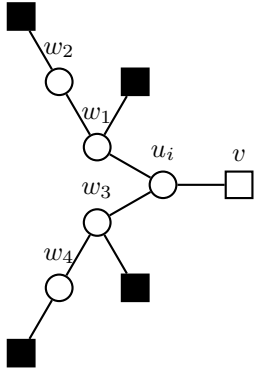
\begin{figure}[H]
    \centering 
        \scalebox{1}
	       {
            \begin{tikzpicture}[thick]
                \tikzstyle{vertex}=[circle,minimum width=1em]
                \tikzstyle{vertex2}=[rectangle, minimum width=1em, minimum height=1em, fill]
                \tikzstyle{vertex3}=[rectangle, minimum width=1em, minimum height=1em]
        	
        	    \node[draw, vertex] (a) at (-0.5, 0) {};
        		    
        		    \node[draw, vertex, shift={(-1.366,-0.5)}] (a0) at (0-90:0) {};
        		    \node[draw, vertex2, shift={(-1.366,-0.5)}] (a1) at (120-180:1) {};
        		    \node[draw, vertex, shift={(-1.366,-0.5)}] (a2) at (-120:1) {};
        		    \node[draw, vertex2, shift={(-1.366,-0.5)}] (a3) at (-120:2) {};
        		    
        		    \node[draw, vertex, shift={(-1.366,0.5)}] (a0') at (0-90:0) {};
        		    \node[draw, vertex2, shift={(-1.366,0.5)}] (a1') at (60:1) {};
        		    \node[draw, vertex, shift={(-1.366,0.5)}] (a2') at (-60+180:1) {};
        		    \node[draw, vertex2, shift={(-1.366,0.5)}] (a3') at (-60+180:2) {};

        		    \draw (a0) -- (a1);
        		    \draw (a0) -- (a2);
        		    \draw (a2) -- (a3);
        		    
        		    \draw (a0) -- (a);
        		    \draw (a0') -- (a);
        		    
        		    \draw (a0') -- (a1');
        		    \draw (a0') -- (a2');
        		    \draw (a2') -- (a3');
        		    
        		    \node[draw, vertex3] (b) at (0.5, 0) {};
        		    
        		    \draw(a) -- (b);
        		    \draw (a.north) node[above]{$u_i$};
        		    \draw (b.north) node[above]{$v$};
        		    \draw (a0.north) node[above]{$w_3$};
        		    \draw (a0'.north) node[above]{$w_1$};
        		     \draw (a2.north) node[above]{$w_4$};
        		      \draw (a2'.north) node[above]{$w_2$};
        	    
            \end{tikzpicture} 
        }
    \caption{Vertex $v$ is adjacent to a 3-bad-vertex.} 
    \label{fig:SP53bforbiddenb}
\end{figure} 

\begin{proof}[Proof of Claim~\ref{c:SP5-2}]
Let $u_i$ be a $2$-neighbor of $v$ and let $w$ be the other neighbor of $u_i$ (see Figure~\ref{fig:SP52forbiddenb}). 
First uncolor vertex $u_i$. Without loss of generality we can suppose that $\varphi'(w)=0$ since $\rho(SP_5)$ is vertex-transitive. If $\varphi(v) \notin \{0, \overline{0}\}$ it is possible to recolor $u_i$ by Property $P_{2, 1}$ of $\rho(SP_5)$. If $s_H(w u_i) = s_H(u_i v)$ and $\varphi(v) = 0$ or $s_H(w u_i) \neq s_H(u_i v)$ and $\varphi(v) = \overline{0}$ then $v$ and $w$ give the same constraints on $u_i$ and we can recolor $u_i$ by Property $P_{1, 4}$ of $\rho(SP_5)$.  Therefore, depending on the signature of the edges $w u_i$ and $u_i v$, the $2$-vertex $u_i$ forbids exactly one color from $v$ and we say that a 2-neighbor forbids one color.
\end{proof}

\begin{proof}[Proof of Claim~\ref{c:SP5-3w}]
Let $u_i$ be a $3$-worse-neighbor of $v$ (see Figure~\ref{fig:SP53wforbiddenb} for vertex naming). 
First uncolor vertices $u_i$ and $w_2$. Without loss of generality we can 
suppose that $\varphi'(w_1)=0$ and that $w_1u_i$ is a positive edge since $\rho(SP_5)$ is vertex-transitive and antiautomorphic. Therefore, $u_i$ may take its color in the set $\{1,\overline{2},\overline{3},4\}$ (i.e the positive neighbors of $0$ in $\rho(SP_5)$). By Claim~\ref{c:SP5-2}, the $2$-vertex $w_2$ forbids one color $f$ from $u_i$. 
\begin{itemize}
\item If $f=1$, then it will always be possible to recolor $u_i$ as long as $\varphi(v)\not\in\{\overline{0},2\}$ (resp. $\varphi(v)\not\in\{0,\overline{2}\}$) if $u_iv$ is positive (resp. negative).
\item If $f=\overline{2}$, then it will always be possible to recolor $u_i$ as long as $\varphi(v)\not\in\{\overline{0},4\}$ (resp. $\varphi(v)\not\in\{0,\overline{4}\}$) if $u_iv$ is positive (resp. negative).
\item If $f=\overline{3}$, then it will always be possible to recolor $u_i$ as long as $\varphi(v)\not\in\{\overline{0},1\}$ (resp. $\varphi(v)\not\in\{0,\overline{1}\}$) if $u_iv$ is positive (resp. negative).
\item If $f=4$, then it will always be possible to recolor $u_i$ as long as $\varphi(v)\not\in\{\overline{0},3\}$ (resp. $\varphi(v)\not\in\{0,\overline{3}\}$) if $u_iv$ is positive (resp. negative).
\item If $f\not\in\{1,\overline{2},\overline{3},4\}$, then it will always 
be possible to recolor $u_i$ as long as $\varphi(v)\neq\overline{0}$ (resp. $\varphi(v)\neq0$) if $u_iv$ is positive (resp. negative).
\end{itemize}

Therefore, 3-worse-neighbors forbid at most two colors.
\end{proof}

\begin{proof}[Proof of Claim~\ref{c:SP5-3b}]
Let $u_i$ be a $3$-bad-neighbor of $v$ (see Figure~\ref{fig:SP53bforbiddenb} for vertex naming). Note that vertices $w_1$ and $w_3$ are $3$-worse vertices. First uncolor vertices $u_i,w_1,\dots,w_4$. By Claim~\ref{c:SP5-3w}, each of $w_1$ and $w_3$ forbids at most 2 colors from $u_i$. Let $F$ be the set of forbidden colors for $u_i$; thus $|F|\le 4$, and let $A=V(\rho(SP_5)) \setminus F$. 
\begin{itemize}
\item If $A$ contains at least four colors that are in the same copy of $SP_5$, it will always be possible to recolor $u_i$ for any color of $v$.
\item If $A$ contains three colors that are in the same copy of $SP_5$ (by symmetry, there are only two cases to consider, either these three colors are consecutive modulo 5 or they are not), there exists only one color 
such that it is not possible to recolor $u_i$.
\end{itemize}

Therefore, 3-bad-vertices can forbid at most one color from their neighbors.
\end{proof}

We now use Claims~\ref{c:SP5-2} to~\ref{c:SP5-3b} to prove that configurations $C_{\ref{SP5C0}}$ to $C_{\ref{SP5C5}}$ cannot appear in $G$. Recall 
that $G$ is a smallest signed graph with $\mad(G) < \frac{20}{7}$ that does not admit a sp-homomorphism to $\rho(SP_5)$.

\begin{lemma}
\label{lem:SP5C0}
The graph $G$ does not contain $C_0$.
\end{lemma}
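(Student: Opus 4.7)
The plan is to use the standard minimality argument: remove the isolated vertex and extend the coloring trivially. Specifically, suppose for contradiction that $G$ contains $C_0$, i.e. there is a vertex $u \in V(G)$ with $d(u)=0$. Consider $G' = G - u$. Since $G'$ is a proper subgraph of $G$ it has strictly fewer vertices, and any subgraph of $G$ has average degree at most $\mad(G) < \tfrac{20}{7}$, so $\mad(G') < \tfrac{20}{7}$ as well; similarly, deleting a vertex cannot create short cycles, so $G'$ still has girth at least $7$.

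By the minimality of $G$, the graph $G'$ admits an sp-homomorphism $\varphi' : G' \xrightarrow{sp} \rho(SP_5)$. I would then extend $\varphi'$ to a map $\varphi : V(G) \to V(\rho(SP_5))$ by setting $\varphi(u)$ to be any vertex of $\rho(SP_5)$ (for instance the color $0$) and keeping $\varphi(w) = \varphi'(w)$ for all $w \neq u$. Since $u$ has no incident edges in $G$, the extension vacuously satisfies the adjacency and sign-preservation constraints at $u$, and $\varphi$ agrees with $\varphi'$ on all edges of $G$, which are exactly the edges of $G'$.

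Therefore $\varphi$ is an sp-homomorphism from $G$ to $\rho(SP_5)$, contradicting the assumption that $G$ has no such homomorphism. Hence $G$ cannot contain $C_0$. There is no genuine obstacle here; the reducibility of an isolated vertex is immediate, and the only thing to check is that deletion preserves the hypotheses ($\mad$ and girth), which it does trivially.
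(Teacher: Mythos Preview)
Your proof is correct and follows the same approach as the paper: remove the isolated vertex, color the smaller graph by minimality, and extend by assigning $u$ any color of $\rho(SP_5)$. You spell out the preservation of the $\mad$ and girth hypotheses more explicitly than the paper does, but the argument is identical.
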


\begin{proof}
Suppose that $G$ contains $C_0$, a vertex $u$ of degree 0. By minimality of $G$, $G-u$ admits a $\rho(SP_5)$-sp-coloring $\varphi$. Vertex $u$ can 
be mapped to any vertex of $\rho(SP_5)$ to extend $\varphi$ to a $\rho(SP_5)$-sp-coloring of $G$, a contradiction.
\end{proof}

\begin{lemma}
\label{lem:SP5C1}
The graph $G$ does not contain $C_1$.
\end{lemma}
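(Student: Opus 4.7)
The plan is to mimic the structure used for $C_0$ in Lemma~\ref{lem:SP5C0}, since $C_1$ is nearly as simple: a pendant vertex only ever constrains the sp-coloring through its unique neighbor. First I would fix notation by letting $u$ denote the 1-vertex in $C_1$ and $v$ its unique neighbor, and then invoke minimality of $G$ to obtain a $\rho(SP_5)$-sp-coloring $\varphi$ of $G - u$. Here I need to observe that $G - u$ still satisfies the hypotheses of the theorem (its maximum average degree is at most $\mad(G) < \tfrac{20}{7}$, and deleting a vertex cannot decrease girth), so by the minimality of $G$ this smaller signed graph really does admit an sp-homomorphism to $\rho(SP_5)$.

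Next I would argue that $\varphi$ extends to $u$. The only constraint on the color assigned to $u$ comes from $v$: if $uv$ is positive (resp.\ negative), $u$ must be mapped to a positive (resp.\ negative) neighbor of $\varphi(v)$ in $\rho(SP_5)$. By Lemma~\ref{lem:PrhoSP}, the target graph $\rho(SP_5)$ has property $P_{1,4}$, meaning that every vertex has at least $4$ positive neighbors and at least $4$ negative neighbors. In particular there is at least one (in fact four) admissible color available for $u$, and picking any such color yields a $\rho(SP_5)$-sp-coloring of the whole graph $G$, contradicting the assumption that $G$ has no such coloring.

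I do not foresee a real obstacle in this lemma; the only minor technical point to confirm is that $G - u$ inherits the relevant bounds, which is immediate. The proof is essentially identical in spirit to Lemma~\ref{lem:SP5C0}, but replaces vertex-transitivity (used to freely pick a color for an isolated vertex) with property $P_{1,4}$ (used to pick a color compatible with the single neighbor $v$).
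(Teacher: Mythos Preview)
Your proposal is correct and follows essentially the same approach as the paper: delete the 1-vertex $u$, invoke minimality to sp-color $G-u$, and extend via Property $P_{1,4}$ of $\rho(SP_5)$. The extra remarks you include (naming the neighbor $v$, verifying that $G-u$ inherits the mad and girth bounds) are fine but not strictly necessary, since the paper treats these as implicit throughout.
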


\begin{proof}
Suppose that $G$ contains $C_1$, a vertex $u$ of degree 1. By minimality of $G$, $G-u$ admits a $\rho(SP_5)$-sp-coloring $\varphi$. By Property $P_{1, 4}$ of $\rho(SP_5)$, there are at least $4$ vertices that $u$ can be 
mapped to in order to extend $\varphi$ to a $\rho(SP_5)$-sp-coloring of $G$, a contradiction.
\end{proof}

\begin{lemma}
\label{lem:SP5C2}
The graph $G$ does not contain $C_2$.
\end{lemma}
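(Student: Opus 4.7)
The plan is to proceed as in the proofs of Lemmas~\ref{lem:SP5C0} and~\ref{lem:SP5C1}: assume $G$ contains two adjacent bad vertices $u_1$ and $u_2$, remove $u_2$ to obtain a smaller graph, color it by minimality, and extend the resulting sp-coloring $\varphi'$ to $u_2$ by showing that strictly fewer than $10 = |V(\rho(SP_5))|$ colors are forbidden for $u_2$ by its neighbors.

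The main tools will be Claims~\ref{c:SP5-2}--\ref{c:SP5-3b}, which bound how many colors a 2-neighbor, 3-worse-neighbor, or 3-bad-neighbor forbids from the vertex being colored. I would split into two cases based on the type of $u_2$. If $u_2$ is a 2-vertex, its two neighbors are the bad vertex $u_1$ (forbidding at most $1$ color, by Claim~\ref{c:SP5-2} or~\ref{c:SP5-3b} according to whether $u_1$ is a 2-vertex or a 3-bad-vertex) and some vertex $w$ of arbitrary degree; a single neighbor with a fixed sign constraint forbids at most $10 - 4 = 6$ colors by Property $P_{1,4}$ of $\rho(SP_5)$, for a total of at most $7$ forbidden colors. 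If instead $u_2$ is a 3-bad-vertex, its three neighbors are $u_1$ (forbidding at most $1$ color) together with two 3-worse-vertices $v_1$ and $v_2$, each of which forbids at most $2$ colors by Claim~\ref{c:SP5-3w}, for a total of at most $5$. In both cases a valid color remains for $u_2$ and $\varphi'$ extends, contradicting the choice of $G$.

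I do not expect any substantive obstacle; the argument is essentially bookkeeping over the two cases. The one point to check is that the Claims apply in context: each presumes we may uncolor and recolor the specific neighbor together with its small-degree satellites (the 2-neighbor of a 3-worse-vertex, or the two 3-worse-vertices and their 2-neighbors hanging off a 3-bad-vertex). All such auxiliary vertices lie in $G - u_2$ and are colored by $\varphi'$, and the girth-$7$ hypothesis ensures they are pairwise distinct from one another and from $u_2$, so the recoloring analyses carried out inside the proofs of Claims~\ref{c:SP5-2}--\ref{c:SP5-3b} go through unchanged.
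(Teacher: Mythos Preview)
Your proof is correct and follows essentially the same approach as the paper's: pass to a smaller graph by minimality, then extend the sp-coloring to $u_2$ by bounding the number of forbidden colors via Claims~\ref{c:SP5-2}, \ref{c:SP5-3w}, \ref{c:SP5-3b} and Property~$P_{1,4}$, splitting into the same two cases on the type of $u_2$. The only cosmetic difference is that the paper deletes $\{u_1,u_2\}$ rather than just $u_2$, which is immaterial since the claims uncolor $u_1$ (and its satellites) anyway.
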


\begin{proof}
Suppose that $G$ contains $C_2$. By minimality of $G$, $G-\{u_1, u_2\}$ admits a $\rho(SP_5)$-sp-coloring $\varphi$. Since $u_1$ is a bad vertex, it forbids at most one color from $u_2$ by Claims~\ref{c:SP5-2} and \ref{c:SP5-3b}. If $u_2$ is a 2-vertex, by Property $P_{1, 4}$ of $\rho(SP_5)$, it can be colored in at least 3 colors. If $u_2$ is a 3-bad-vertex, its 
neighbors (a bad vertex and two 3-worse-neighbors) forbid at most 4 colors from it by Claims~\ref{c:SP5-2}, \ref{c:SP5-3w} and \ref{c:SP5-3b} so it can be colored in at least 6 colors. It is always possible to extend $\varphi$ to a $\rho(SP_5)$-sp-coloring of $G$, a contradiction.
\end{proof}

\begin{lemma}
\label{lem:SP5C3}
The graph $G$ does not contain $C_3$.
\end{lemma}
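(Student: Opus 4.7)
The plan is to mirror the argument of Lemma~\ref{lem:SP5C2}: remove $u_1$ from $G$, invoke minimality to obtain a $\rho(SP_5)$-sp-coloring $\varphi$ of $G - u_1$, and then extend $\varphi$ to $u_1$, yielding a contradiction. The vertex $u_1$ has three neighbors: the vertex $v_1$ of arbitrary degree (still colored by $\varphi$) and the two bad vertices $u_2$ and $u_3$.

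In order to invoke the bounds of Claims~\ref{c:SP5-2} and~\ref{c:SP5-3b}, I would additionally uncolor $u_2$ and $u_3$ together with the support required by the relevant claim: if some $u_i$ is $3$-bad, also uncolor its two $3$-worse-neighbors and their $2$-neighbors. The girth-$7$ hypothesis is used precisely here to guarantee that these supports, together with $v_1$, are pairwise disjoint subsets of $V(G)$, so that the simultaneous uncolorings are well defined. Indeed, any overlap among them would produce a cycle of length at most~$6$ passing through $u_1$, contradicting the hypothesis $g(G)\ge 7$.

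For the extension itself, property $P_{1,4}$ of $\rho(SP_5)$ (Lemma~\ref{lem:PrhoSP}) implies that the still-colored neighbor $v_1$ restricts $u_1$ to at least $4$ colors, namely the neighbors of $\varphi(v_1)$ in $\rho(SP_5)$ having the same sign as the edge $u_1 v_1$. By Claim~\ref{c:SP5-2} or Claim~\ref{c:SP5-3b}, each of $u_2$ and $u_3$ forbids at most one further color from $u_1$. Hence at least $4 - 1 - 1 = 2$ colors remain available for $u_1$; after choosing one such color, each Claim supplies the promised consistent extension to $u_2$ (respectively $u_3$) and its support, producing a $\rho(SP_5)$-sp-coloring of $G$ and the desired contradiction. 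The main (minor) obstacle is the disjointness verification; everything else is immediate from the two Claims.
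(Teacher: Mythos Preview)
Your argument is correct and essentially identical to the paper's: the paper removes $\{u_1,u_2,u_3\}$, invokes minimality, then applies Claims~\ref{c:SP5-2} and~\ref{c:SP5-3b} together with $P_{1,4}$ to count $10-6-1-1=2$ available colors for $u_1$, which matches your $4-1-1=2$. Your version is simply more explicit about uncoloring the supports of the bad vertices and about using the girth-$7$ hypothesis to ensure disjointness; the paper handles the latter globally (in the remark following Figure~\ref{fig:SP5configs}) and leaves the former implicit in the statements of the Claims.
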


\begin{proof}
Suppose that $G$ contains $C_3$. By minimality of $G$, $G-\{u_1, u_2, u_3\}$ admits a $\rho(SP_5)$-sp-coloring $\varphi$. By Claims~\ref{c:SP5-2} and \ref{c:SP5-3b}, $u_2$ and $u_3$ each forbids at most 1 color from $u_1$ and $v_1$ forbids 6 colors from $u_1$ by Property $P_{1, 4}$ of $\rho(SP_5)$. This means that there are at least 2 available colors  for $u_1$. 
It is always possible to extend $\varphi$ to a $\rho(SP_5)$-sp-coloring of $G$, a contradiction.
\end{proof}

\begin{lemma}
\label{lem:SP5C4}
The graph $G$ does not contain $C_4$.
\end{lemma}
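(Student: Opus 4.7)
The plan is to mimic the minimality argument used in the proofs of Lemmas~\ref{lem:SP5C0}--\ref{lem:SP5C3}. Assuming $G$ contains $C_4$, I set $G' = G - \{u_2,u_3,u_4\}$, which is a proper subgraph of $G$ and therefore still has $\mad(G')<\frac{20}{7}$. By the minimality of $G$ there is an sp-coloring $\varphi:G'\xrightarrow{sp}\rho(SP_5)$, and the task is to extend $\varphi$ to $u_2$, $u_3$, $u_4$, possibly recoloring $u_1$ together with the bad structure attached to it.

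The strategy is to color $u_2$ first, treating each of its three neighbors by a different mechanism. The square neighbor $v_2$ is already colored and, by Property $P_{1,4}$ of $\rho(SP_5)$, forbids at most $10-4=6$ colors from $u_2$. The bad neighbor $u_1$ is either a $2$-vertex or a $3$-bad-vertex, so after uncoloring $u_1$ together with its auxiliary structure, Claim~\ref{c:SP5-2} or Claim~\ref{c:SP5-3b} guarantees that $u_1$ forbids at most $1$ color from $u_2$. The third neighbor $u_3$ is a $3$-worse-vertex because its neighbor $u_4$ is a $2$-vertex; applying Claim~\ref{c:SP5-3w} with $u_i=u_3$, $v=u_2$, $w_2=u_4$, $w_1=v_3$ and $w_3=v_4$ -- all of the $w_j$'s lying in $V(G')$ and hence already colored -- $u_3$ forbids at most $2$ colors from $u_2$. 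Summing, $u_2$ has at least $10-6-1-2=1$ available color. Picking this color, then invoking the procedures built into Claims~\ref{c:SP5-3w} and \ref{c:SP5-2}/\ref{c:SP5-3b} to color $u_3$, $u_4$ and recolor $u_1$'s neighborhood, yields a $\rho(SP_5)$-sp-coloring of $G$, the desired contradiction.

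The main obstacle is bookkeeping rather than conceptual: the three claims were each proved by uncoloring a local patch, and combining them requires the patches around $u_1$, around $u_3$ (which drags in $u_4$), and the fixed colored neighbor $v_2$ to be pairwise disjoint in $G$. The girth-$7$ hypothesis is exactly what forbids the dangerous short coincidences, for example $v_2$ coinciding with a neighbor of $u_3$, or the secondary structure around a possibly $3$-bad $u_1$ reaching $v_3$, $v_4$, or $u_4$. With only a single free color left after the count, any overlap would collapse the argument, so verifying that the girth assumption really does keep all these neighborhoods separate is the crucial step.
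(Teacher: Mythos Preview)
Your proof is correct and follows essentially the same strategy as the paper: both arguments delete the white vertices, invoke minimality to obtain a partial $\rho(SP_5)$-sp-coloring, and then count $10-6-1-2=1$ available color at the pivotal $3$-vertex. The only cosmetic difference is orientation: the paper centers its count on $u_3$ and bounds the effect of $u_2$ on $u_3$ by an explicit computation of $|N^\pm(A\setminus\{f\})|\ge 8$, whereas you center on $u_2$ and obtain the dual bound on $u_3$'s effect directly from Claim~\ref{c:SP5-3w} --- marginally cleaner, since that claim is exactly the computation the paper redoes inline.
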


\begin{proof}
Suppose that $G$ contains $C_4$. By minimality of $G$, $G-\{u_1, u_2, u_3, u_4\}$ admits a $\rho(SP_5)$-sp-coloring $\varphi$. By vertex-transitivity of $\rho(SP_5)$, we can assume w.l.o.g. that $\varphi(v_2)=0$, and since $\rho(SP_5)$ is antiautomorphic, we can also assume that $s_G(v_2u_2)=+1$. With respect to $v_2$, there are only four available colors for 
$u_2$ which are $A=\{1,\overline{2},\overline{3},4\}$. By Claims~\ref{c:SP5-2} and \ref{c:SP5-3b}, $u_1$ forbids at most 1 color $f$ from $u_2$. 

It is easy to see that $|N^+(A\setminus f)| = |N^-(A\setminus f)| \ge 8$. Therefore, $u_2$ forbids at most $2$ colors from $u_3$. 
Vertex $u_4$ forbids at most one color from $u_3$ by Claim~\ref{c:SP5-2}, 
and $v_3$ forbids 6 colors from $u_3$  by Property $P_{1, 4}$ of $\rho(SP_5)$. Hence $u_3$ can be colored in at least one color. It is always possible to extend $\varphi$ to a $\rho(SP_5)$-sp-coloring of $G$, a contradiction.
\end{proof}

\begin{lemma}
\label{lem:SP5C5}
The graph $G$ does not contain $C_5$.
\end{lemma}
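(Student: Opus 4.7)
The plan is to argue by the minimality of $G$: assume for contradiction that $G$ contains $C_5$, and remove the central 4-vertex $u_1$ together with its three 2-neighbors $u_2, u_3, u_4$. By minimality, the graph $G - \{u_1, u_2, u_3, u_4\}$ admits a $\rho(SP_5)$-sp-coloring $\varphi$, and the goal is to extend $\varphi$ to $G$. I will first choose a color for $u_1$ and then color each $u_i$ in turn.

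To color $u_1$, I count the forbidden colors. The already-colored neighbor $v_1$ forbids $6$ colors from $u_1$ by Property $P_{1, 4}$ of $\rho(SP_5)$, since only $4$ of the $10$ vertices of $\rho(SP_5)$ are $s_G(u_1 v_1)$-neighbors of $\varphi(v_1)$. Moreover, for each $i \in \{2, 3, 4\}$, in order to guarantee that the 2-vertex $u_i$ can subsequently be colored, $\varphi(u_1)$ must avoid the unique ``bad'' color that would render $u_i$ uncolorable given $\varphi(v_i)$. By an argument symmetric to the proof of Claim~\ref{c:SP5-2} (swapping the roles of the two neighbors of $u_i$), there is exactly one such bad color, namely $\varphi(v_i)$ when the signs of $u_1 u_i$ and $u_i v_i$ differ and $\overline{\varphi(v_i)}$ when they agree. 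Hence at most $6 + 3 = 9$ colors of $\rho(SP_5)$ are forbidden for $u_1$, leaving at least one valid choice. Color $u_1$ accordingly; then each $u_i$ has both its neighbors colored in a compatible way, and by Property $P_{2, 1}$ (or $P_{1, 4}$ when $\varphi(u_1)$ and $\varphi(v_i)$ coincide with aligned sign constraints) admits a valid color, extending $\varphi$ to a $\rho(SP_5)$-sp-coloring of $G$ and yielding the desired contradiction.

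The one delicate step to verify is the claim that each $u_i$ forbids exactly one color from $u_1$. This reduces to showing that, for any $y \in V(\rho(SP_5))$ and any pair of signs $(\alpha, \beta)$, there is a unique $x \in V(\rho(SP_5))$ with $N^\alpha(x) \cap N^\beta(y) = \emptyset$. This follows from the structure of $\rho(SP_5)$: every vertex has exactly $4$ positive and $4$ negative neighbors, so its only non-neighbors are itself and its antitwin; the two cases $x = y$ and $x = \overline{y}$ then split according to whether $\alpha = \beta$ or $\alpha \neq \beta$, and Property $P_{2, 1}$ rules out every other candidate.
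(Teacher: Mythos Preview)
Your proof is correct and follows essentially the same approach as the paper: remove $\{u_1,u_2,u_3,u_4\}$, invoke minimality, and count at most $6+3=9$ forbidden colors for $u_1$ using Property $P_{1,4}$ for $v_1$ and Claim~\ref{c:SP5-2} for each 2-neighbor. The only difference is that you re-derive the content of Claim~\ref{c:SP5-2} in your final paragraph rather than simply citing it; the paper applies that claim directly without the extra justification.
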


\begin{proof}
Suppose that $G$ contains $C_5$. By minimality of $G$, $G-\{u_1, u_2, u_3, u_4\}$ admits a $\rho(SP_5)$-sp-coloring $\varphi$. By Claims~\ref{c:SP5-2}, $u_2$, $u_3$ and $u_4$ each forbids at most 1 color from $u_1$ and $v_1$ forbids 6 colors from $u_2$ by Property $P_{1, 4}$ of $\rho(SP_5)$. 
This means that there is at least 1 color available for $u_1$. It is always possible to extend $\varphi$ to a $\rho(SP_5)$-sp-coloring of $G$, a contradiction.
\end{proof}

\subsection{Discharging}
\label{sec:SP5positive}

We start by the definition of the initial weighting $\omega$ defined by $\omega(v) = d(v) - \frac{20}{7}$ for each vertex $v$ of degree $d(v)$. By construction, the sum of all the weights $\sum_{v\in V(G)}\omega(v)$ is negative since the maximum average degree of $G$ (and therefore its average degree) is strictly smaller than $\frac{20}{7}$.

We then introduce the following discharging rules:
\begin{itemize}
\item[($\ruleSPf{SP5R1}$)] Every $3^+$-vertex gives $\frac{3}{7}$ to each 
of its 2-neighbors.
\item[($\ruleSPf{SP5R2}$)] Every 3-good, 3-bad or $4^+$-vertex gives $\frac{1}{7}$ to each of its 3-worse-neighbors.
\item[($\ruleSPf{SP5R3}$)] Every 3-good or $4^+$-vertex gives $\frac{1}{7}$ to each of its 3-bad-neighbors.
\end{itemize}

This section is devoted to obtaining a contradiction by proving that
every vertex of $G$ has non-negative final weight after the
discharging procedure. We distinguish several cases for the vertices,
depending on their degree. Remember that $G$ cannot contain configurations $C_0$ to $C_5$ by Lemmas~\ref{lem:SP5C0} to \ref{lem:SP5C5}. Note that since $G$ cannot contain $C_0$ and $C_1$, the
minimum degree of $G$ is~$2$.

\paragraph{$2$-vertices.}
Let $v$ be a 2-vertex. Since $G$ cannot contain $C_{\ref{SP5C2}}$, it does not have any 2-neighbors so it has two 3-worse or $4^+$-neighbors and it receives $\frac{3}{7}$ from each by $R_{\ref{SP5R1}}$. Therefore, the final weight of $v$ is $\omega'(v) = 2 - \frac{20}{7} + 2 \cdot \frac{3}{7} = 0$.

\paragraph{3-worse-vertices.}
Let $v$ be a 3-worse-vertex. Since it is 3-worse, it has one 2-neighbor (but not more since $G$ cannot contain $C_3$) to which it has to give $\frac{3}{7}$. Its other two neighbors are 3-good or $4^+$-vertices (they cannot be 3-worse or 3-bad since $G$ cannot contain $C_3$ and $C_4$) that each gives $\frac{1}{7}$ to it by $R_{\ref{SP5R2}}$. Therefore, the final weight of $v$ is $\omega'(v) = 3 - \frac{20}{7} - \frac{3}{7} + 2 \cdot \frac{1}{7} = 0$.

\paragraph{3-bad-vertices.}
Let $v$ be a 3-bad-vertex. Since it is 3-bad, it has two 3-worse-neighbors (but not more since $G$ cannot contain $C_{\ref{SP5C3}}$) to each of which it has to give $\frac{1}{7}$ by $R_{\ref{SP5R2}}$. Its other neighbor 
is a 3-good or $4^+$-vertex (it cannot be a 2-vertex or a 3-bad-vertex since $G$ cannot contain $C_{\ref{SP5C3}}$ and $C_{\ref{SP5C2}}$) that gives $\frac{1}{7}$ to it by $R_{\ref{SP5R3}}$. Therefore, the final weight of $v$ is $\omega'(v) = 3 - \frac{20}{7} - 2 \cdot \frac{1}{7} + \frac{1}{7} = 0$.

\paragraph{3-good-vertices.}
Let $v$ be a 3-good-vertex. Since it is 3-good, it cannot be 3-bad or 3-worse so it cannot have a 2-neighbor or two 3-worse-neighbor. It can also not have two 3-bad-neighbors since $G$ cannot contain $C_{\ref{SP5C3}}$.

If it has one 3-worse-neighbor, it cannot have a 3-bad-neighbor because $G$ cannot contain $C_{\ref{SP5C4}}$ so it only has to give $\frac{1}{7}$ to the 3-worse-neighbor by $R_{\ref{SP5R2}}$. Therefore, the final weight 
of $v$ is $\omega'(v) = 3 - \frac{20}{7} - \frac{1}{7} = 0$.

If it has one 3-bad-neighbor, it cannot have a 3-worse-neighbor since $G$ 
cannot contain $C_4$ so it only has to give $\frac{1}{7}$ to the 3-bad-neighbor by $R_{\ref{SP5R3}}$. Therefore, the final weight of $v$ is $\omega'(v) = 3 - \frac{20}{7} - \frac{1}{7} = 0$.

\paragraph{$4$-vertices.}
Let $v$ be a 4-vertex. Since $G$ cannot contain $C_5$, it has at most two 
2-neighbors. 

If it has two 2-neighbors and two 3-worse or 3-bad vertices it has final weight $\omega'(v) = 4 - \frac{20}{7} - 2 \cdot \frac{3}{7} - 2 \cdot \frac{1}{7} = 0$ by $R_{\ref{SP5R1}}$, $R_{\ref{SP5R2}}$ and $R_{\ref{SP5R3}}$.

If it has one 2-neighbor and three 3-worse or 3-bad vertices it has final 
weight $\omega'(v) = 4 - \frac{20}{7} - 1 \cdot \frac{3}{7} - 3 \cdot \frac{1}{7} = \frac{2}{7}$ by $R_{\ref{SP5R1}}$, $R_{\ref{SP5R2}}$ and $R_{\ref{SP5R3}}$.

If it has zero 2-neighbors and four 3-worse or 3-bad vertices it has final weight $\omega'(v) = 4 - \frac{20}{7} - 4 \cdot \frac{1}{7} = \frac{4}{7}$ by $R_{\ref{SP5R2}}$ and $R_{\ref{SP5R3}}$.

\paragraph{$5^+$-vertices.}
Let $v$ be an $n$-vertex with $n \geq 5$. In the worst case, $v$ has $n$ 2-neighbors to each of which he has to give $\frac{3}{7}$ by $R_{\ref{SP5R1}}$. Therefore, $v$ has final weight at least $n - \frac{20}{7} - n \cdot \frac{3}{7}$ which is greater than or equal to $0$ for $n \geq 5$.\newline

Every vertex has non-negative weight after discharging so $G$ cannot have 
maximum average degree smaller than $\frac{20}{7}$. This gives us a contradiction and concludes the proof.

\section{Proof of Theorem \ref{thm:SP9+}}
In this section, we prove that any signed graph of maximum average degree 
less than $\frac{17}{5}$ admit a $\rho(SP_9^+)$-sp-coloring. To do so, we 
suppose that this theorem is false and we consider in the remainder of this section a minimal counter-example G w.r.t its order: it is a smallest signed graph with $\mad(G) < \frac{17}{5}$ admitting no $\rho(SP_9^+)$-sp-coloring.

We first introduce some notation in order to simplify the statements of configurations and rules.

We say that a vertex $v$ is \textit{bad} if:
\begin{itemize}
\item $v$ has degree 4 and has one 2-neighbor.
\item $v$ has degree 6 and has four 2-neighbors.
\end{itemize}

If a $4^+$-vertex is not bad, we say that it is $\textit{good}$.

\subsection{Forbidden configurations}

We define several configurations $C_0,\ldots,C_{8}$ as follows (see Figure~\ref{fig:SP9+configs}).

\begin{itemize}

\item[$\bullet$] $\configSPnp{SPnpC0}$ is a 0-vertex.
\item[$\bullet$] $\configSPnp{SPnpC1}$ is a 1-vertex.
\item[$\bullet$] $\configSPnp{SPnpC2}$ is a 2-vertex adjacent to another 2-vertex.
\item[$\bullet$] $\configSPnp{SPnpC3}$ is a 2-vertex adjacent to a 3-vertex.
\item[$\bullet$] $\configSPnp{SPnpC4}$ is a 2-vertex adjacent to two adjacent vertices.
\item[$\bullet$] $\configSPnp{SPnpC5}$ is a 3-vertex.
\item[$\bullet$] $\configSPnp{SPnpC6}$ is a vertex $u$ adjacent to $t$ 2-vertices, $b$ bad vertices and no good vertices with $t + 4 \cdot b < 20$ 
and $b \leq 2$ or $t < 3$ and $b = 3$.
\item[$\bullet$] $\configSPnp{SPnpC7}$ is a vertex $u$ adjacent to $t$ 2-vertices, $b$ bad vertices and one good vertex with $t + 4 \cdot b < 9$.
\item[$\bullet$] $\configSPnp{SPnpC8}$ is a vertex $u$ adjacent to $t$ 2-vertices, 0 bad vertices and two good vertices with $t < 4$.
\end{itemize}

\begin{figure}[H]
    \centering
    \begin{subfigure}[b]{0.05\textwidth}
        \centering 
            \scalebox{1}
	        {
                \begin{tikzpicture}[thick]
                    \tikzstyle{vertex}=[circle,minimum width=1em]
                    \tikzstyle{vertex2}=[rectangle, minimum width=1em, minimum height=1em]
        		
        		    \node[draw, vertex] (1) at (0,0) {};
        		    \draw (1.north) node[above]{$u$};
        		    
                \end{tikzpicture} 
            }
        \caption*{$C_0$} 
    \end{subfigure} 
    \begin{subfigure}[b]{0.10\textwidth} 
        \centering 
            \scalebox{1}
	        {
                \begin{tikzpicture}[thick]
                    \tikzstyle{vertex}=[circle,minimum width=1em]
                    \tikzstyle{vertex2}=[rectangle, minimum width=1em, minimum height=1em]
        		
        		    \node[draw, vertex] (1) at (0,0) {};
        		    \node[draw, vertex2, fill] (2) at (1,0) {};
        		    
        		    \draw (1.north) node[above]{$u$};
        		    \draw (2.north) node[above]{$v$};
        		    
        		    \draw (1) -- (2);
                \end{tikzpicture} 
            }
        \caption*{$C_1$} 
    \end{subfigure} 
    \begin{subfigure}[b]{0.23\textwidth} 
        \centering 
            \scalebox{1}
	        {
                \begin{tikzpicture}[thick]
                    \tikzstyle{vertex}=[circle,minimum width=1em]
                    \tikzstyle{vertex2}=[rectangle, minimum width=1em, minimum height=1em, fill]
        		
        		    \node[draw, vertex] (1) at (0,0) {};
        		    \node[draw, vertex] (2) at (1,0) {};
        		    \node[draw, vertex2] (0) at (-1,0) {};
        		    \node[draw, vertex2] (3) at (2,0) {};
        		    
        		    \draw (0.north) node[above]{$v_1$};
        		    \draw (1.north) node[above]{$u_1$};
        		    \draw (2.north) node[above]{$u_2$};
        		    \draw (3.north) node[above]{$v_2$};
        		    
        		    \draw (0) -- (1);
        		    \draw (1) -- (2);
        		    \draw (2) -- (3);
                \end{tikzpicture} 
            }
        \caption*{$C_2$} 
    \end{subfigure} 
    \begin{subfigure}[b]{0.23\textwidth} 
        \centering 
            \scalebox{1}
	        {
                \begin{tikzpicture}[thick]
                    \tikzstyle{vertex}=[circle,minimum width=1em]
                    \tikzstyle{vertex2}=[rectangle, minimum width=1em, minimum height=1em, fill]
        		
        		    \node[draw, vertex, fill] (0) at (0:0) {};
        		    \node[draw, vertex] (1) at (0:1) {};
        		    \node[draw, vertex2] (2) at (0:2) {};
        		    \node[draw, vertex2] (3) at (120:1) {};
        		    \node[draw, vertex2] (4) at (240:1) {};
        		    
        		    \draw (0.north) node[above]{$u_1$};
        		    \draw (1.north) node[above]{$u_2$};
        		    \draw (2.north) node[above]{$v_3$};
        		    \draw (3.north) node[above]{$v_1$};
        		    \draw (4.north) node[above]{$v_2$};
        		    
        		    \draw (0) -- (1);
        		    \draw (1) -- (2);
        		    \draw (0) -- (3);
        		    \draw (0) -- (4);
                \end{tikzpicture}  
            }
        \caption*{$C_3$} 
    \end{subfigure} 
    \begin{subfigure}[b]{0.16\textwidth} 
        \centering 
            \scalebox{1}
	        {
                \begin{tikzpicture}[thick]
                    \tikzstyle{vertex}=[circle,minimum width=1em]
                    \tikzstyle{vertex2}=[rectangle, minimum width=1em, minimum height=1em, fill]

        		    \node[draw, vertex2] (1) at (0-30:1) {};
        		    \node[draw, vertex] (3) at (120-30:1) {};
        		    \node[draw, vertex2] (4) at (240-30:1) {};
        		    
        		    \draw (1.north) node[above]{$v_2$};
        		    \draw (3.north) node[above]{$u$};
        		    \draw (4.north) node[above]{$v_1$};
        		    
        		    \draw (3) -- (4);
        		    \draw (1) -- (3);
        		    \draw (1) -- (4);
                \end{tikzpicture} 
            }
        \caption*{$C_4$} 
    \end{subfigure} 
     \begin{subfigure}[b]{0.16\textwidth} 
        \centering 
            \scalebox{1}
	        {
                \begin{tikzpicture}[thick]
                    \tikzstyle{vertex}=[circle,minimum width=1em]
                    \tikzstyle{vertex2}=[rectangle, minimum width=1em, minimum height=1em, fill]

        		    \node[draw, vertex] (0) at (0:0) {};
        		    \node[draw, vertex2] (1) at (0-30:1) {};
        		    \node[draw, vertex2] (3) at (120-30:1) {};
        		    \node[draw, vertex2] (4) at (240-30:1) {};
        		    
        		    \draw (0.north) node[above, shift={(-0.25,0)}]{$u$};
        		    \draw (1.north) node[above]{$v_3$};
        		    \draw (3.north) node[above]{$v_1$};
        		    \draw (4.north) node[above]{$v_2$};
        		    
        		    \draw (0) -- (1);
        		    \draw (0) -- (3);
        		    \draw (0) -- (4);
                \end{tikzpicture} 
            }
        \caption*{$C_5$} 
    \end{subfigure} 
    
    \caption{Forbidden configurations $C_{0}$ to $C_{5}$. Square vertices 
can be of any degree. White vertices will be removed to show that the configuration is forbidden.} 
    \label{fig:SP9+configs}
\end{figure}
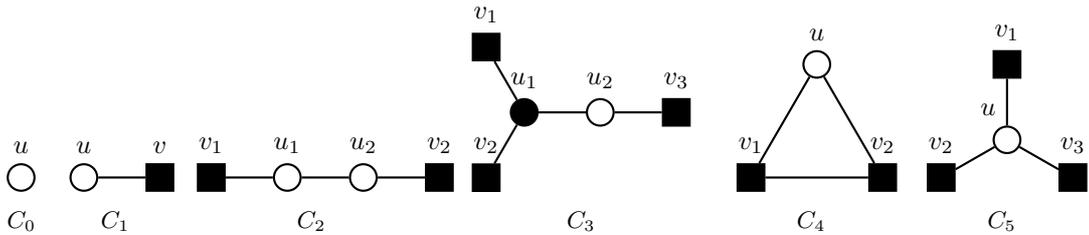

In this section, we prove that configurations $C_{\ref{SPnpC0}}$ to
$C_{\ref{SPnpC8}}$ are forbidden. To this end, we first prove some
generic results we use to prove that the configurations are forbidden. Remember that $\rho(SP_9^+)$ is vertex-transitive, antiautomorphic and has Properties $P_{1,9}$, $P_{2, 4}$ and $P_{3, 1}$ by Lemma~\ref{lem:PrhoSP}.\newline

We say that $G$ is a minimal counter-example if it has the fewest number of $3^+$-vertices and the fewest number of $2^-$-vertices among all the counter-examples that have the same amount of $3^+$-vertices. This will allow us to prove that Configuration~$C_{\ref{SPnpC5}}$ is forbidden.\newline

Given a graph $H$ and a homomorphism from $H$ to $\rho(SP_9^+)$, we say that two vertices of $H$ have the same identity if they are colored with the same color or colors that are antitwins in $\rho(SP_9^+)$. Since $SP_9^+$ has $10$ vertices, there are $10$ different identities in $\rho(SP_9^+)$. If a vertex $v$ is adjacent to $n$ colored vertices with pairwise different identities, these $n$ colors form a clique in $\rho(SP_9^+)$. If $2 \leq n \leq 3$ we can use Property $P_{2, 4}$ or $P_{3,1}$ to color $v$ by Lemma~\ref{lem:PTRSP}.

\begin{lemma}
\label{lem:SPnpC0}\label{TRSP9_C0}
The graph $G$ does not contain Configuration $C_{\ref{SPnpC0}}$.
\end{lemma}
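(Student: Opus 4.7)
The plan is to imitate the argument used for Lemma~\ref{lem:SP5C0}. Suppose toward a contradiction that $G$ contains a vertex $u$ of degree $0$, and let $G' = G - u$. Since deleting an isolated vertex creates no new subgraph, we have $\mad(G') \le \mad(G) < \frac{17}{5}$, so $G'$ still lies in $\mathcal{M}_{17/5}$. Moreover $G'$ has the same number of $3^+$-vertices as $G$ and exactly one fewer $2^-$-vertex, so by the refined minimality hypothesis stated just above the lemma (fewest $3^+$-vertices, ties broken by fewest $2^-$-vertices), the graph $G'$ is not a counter-example and therefore admits a $\rho(SP_9^+)$-sp-coloring $\varphi$.

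Since $u$ has no neighbors in $G$, no edge constraint restricts its image in any extension of $\varphi$, so mapping $u$ to an arbitrary vertex of $\rho(SP_9^+)$ yields a $\rho(SP_9^+)$-sp-coloring of the whole graph $G$. This contradicts the choice of $G$ as a counter-example. The argument needs no property from Lemma~\ref{lem:PTRSP}; the only point to be careful about is to invoke the correct notion of minimality so that $G'$ is guaranteed to be strictly smaller than $G$ in the chosen order. I do not anticipate any obstacle here, as this is the most basic instance of the minimal-counter-example reduction and is essentially identical to its analogue in Section~\ref{sec:thSP5}.
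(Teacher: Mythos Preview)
Your argument is correct and follows essentially the same approach as the paper's proof: remove the isolated vertex $u$, color $G-u$ by minimality, and extend to $u$ arbitrarily. You are in fact slightly more careful than the paper in explicitly verifying that $\mad(G')<\tfrac{17}{5}$ and in invoking the refined minimality order (fewest $3^+$-vertices, then fewest $2^-$-vertices), which the paper glosses over with a plain ``by minimality of $G$''.
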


\begin{proof}
Suppose that $G$ contains $C_{\ref{SPnpC0}}$, a vertex $u$ of degree 0. By minimality of $G$, $G-u$ admits a $\rho(SP_9^+)$-coloring $\varphi$. Vertex $u$ can be mapped to any vertex of $\rho(SP_9^+)$ to extend $\varphi$ to a $\rho(SP_9^+)$-coloring of $G$, a contradiction.
\end{proof}

\begin{lemma}\label{TRSP9_C1}
The graph $G$ does not contain Configuration $C_{\ref{SPnpC1}}$.
\end{lemma}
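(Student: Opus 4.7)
The plan is to mirror Lemma~\ref{lem:SP5C1}. Suppose for contradiction that $G$ contains a vertex $u$ of degree $1$ with unique neighbor $v$, and consider the graph $G - u$. Removing a vertex cannot increase the maximum average degree, so $\mad(G - u) \le \mad(G) < \frac{17}{5}$. I first argue that $G - u$ is strictly smaller than $G$ in the lexicographic ordering on (number of $3^+$-vertices, number of $2^-$-vertices) that defines a minimal counter-example in this section. The $2^-$-vertex $u$ disappears, and the neighbor $v$ either stays a $3^+$-vertex (if $d_G(v) \ge 4$), drops from a $3$-vertex to a $2$-vertex (if $d_G(v) = 3$), or stays a $2^-$-vertex (if $d_G(v) \le 2$). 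In the first and third cases the $3^+$-count is unchanged while the $2^-$-count strictly decreases; in the middle case the $3^+$-count strictly decreases. By minimality, $G - u$ therefore admits a $\rho(SP_9^+)$-sp-coloring $\varphi$.

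It then remains to extend $\varphi$ to $u$. By Property $P_{1, 9}$ of $\rho(SP_9^+)$ (Lemma~\ref{lem:PTRSP}), for the color $\varphi(v)$ and the sign $s_G(uv)$ there exist at least $9$ vertices of $\rho(SP_9^+)$ joined to $\varphi(v)$ by an edge of the required sign. Setting $\varphi(u)$ to any of them yields a $\rho(SP_9^+)$-sp-coloring of the whole of $G$, contradicting the assumption that $G$ is a counter-example.

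There is no real obstacle here; the only subtlety worth flagging is that the minimality notion used in Section~\ref{sec:thSP9+} is the refined lexicographic one (introduced in order to later forbid Configuration $C_{\ref{SPnpC5}}$), rather than the plain vertex-count minimality of Section~\ref{sec:thSP5}. Hence the invocation of minimality must be verified case by case as above, rather than quoted directly.
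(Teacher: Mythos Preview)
Your proof is correct and follows the same approach as the paper: delete the $1$-vertex $u$, invoke minimality to color $G-u$, and extend via Property $P_{1,9}$. You are in fact more careful than the paper, which simply writes ``by minimality of $G$'' without checking the refined lexicographic criterion; your case split on $d_G(v)$ fills that gap cleanly. One minor remark: the label \texttt{sec:thSP9+} you reference does not exist in the paper (only \texttt{sec:thSP5} and \texttt{sec:thSPq+} are defined), so that cross-reference would print as ``??'' --- but this is cosmetic and does not affect the mathematics.
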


\begin{proof}
Suppose that $G$ contains $C_{\ref{SPnpC1}}$, a vertex $u$ of degree 1. By minimality of $G$, $G-u$ admits a $\rho(SP_9^+)$-coloring $\varphi$. By 
Property $P_{1, 9}$ of $\rho(SP_9^+)$, there are at least $9$ vertices that $u$ can be mapped to in order to extend $\varphi$ to a $\rho(SP_9^+)$-coloring of $G$, a contradiction.
\end{proof}

\begin{lemma}\label{TRSP9_C2}
The graph $G$ does not contain Configuration $C_{\ref{SPnpC2}}$.
\end{lemma}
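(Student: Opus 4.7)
The plan is to use the minimality of $G$ by deleting both $u_1$ and $u_2$, extracting a $\rho(SP_9^+)$-sp-coloring $\varphi$ of $G-\{u_1,u_2\}$ by the minimality assumption, and then showing that $\varphi$ can always be extended to $u_2$ and $u_1$ in that order, which contradicts $G$ being a counter-example.

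First I would pick a color for $u_2$. By Property $P_{1,9}$ applied to $\varphi(v_2)$, there are exactly $9$ vertices of $\rho(SP_9^+)$ joined to $\varphi(v_2)$ by an edge of sign $s_G(u_2v_2)$; these form the set of candidate colors for $u_2$. Note that none of these candidates equals $\varphi(v_2)$ or its antitwin $\overline{\varphi(v_2)}$, since antitwins are never adjacent in $\rho(SP_9^+)$. To make the second extension step work via Property $P_{2,4}$, I need the chosen color for $u_2$ to have an identity different from that of $\varphi(v_1)$, i.e.\ to be distinct from $\varphi(v_1)$ and from $\overline{\varphi(v_1)}$. This excludes at most $2$ of the $9$ candidates, leaving at least $7$ admissible colors $c$ such that $\{c,\varphi(v_1)\}$ is a $2$-clique of $\rho(SP_9^+)$.

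Fixing any such $c$ and setting $\varphi(u_2)=c$, I then apply Property $P_{2,4}$ to the clique $\{c,\varphi(v_1)\}$ with sign pattern $(s_G(u_1u_2),s_G(u_1v_1))$: this produces at least $4$ vertices of $\rho(SP_9^+)$ joined to both $c$ and $\varphi(v_1)$ with the prescribed signs, any of which is a valid color for $u_1$. The resulting extension of $\varphi$ is a $\rho(SP_9^+)$-sp-coloring of $G$, contradicting minimality.

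The whole argument is essentially a counting exercise inside $\rho(SP_9^+)$, so no single step is genuinely hard. The only point needing care is the identity bookkeeping: one must be certain that ``$u_2$ shares the identity of $\varphi(v_1)$'' is ruled out by discarding at most two colors, which relies on the structural fact that two vertices of $\rho(SP_9^+)$ form an edge if and only if they have different identities. This is exactly why the numerical gap between $P_{1,9}$ and $P_{2,4}$ is comfortable enough for the extension to succeed.
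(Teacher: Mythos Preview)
Your argument is correct and essentially identical to the paper's: both delete $\{u_1,u_2\}$, use $P_{1,9}$ to get nine candidates for one of the two $2$-vertices, discard those sharing the identity of the far anchor, and then finish the other $2$-vertex via $P_{2,4}$. The only cosmetic differences are that the paper colors $u_1$ first and $u_2$ second (you do the reverse, which is symmetric), and the paper observes that at most one candidate shares the identity of the far vertex (since the nine $P_{1,9}$-candidates are pairwise non-antitwin), whereas your bound of ``at most two'' is looser but still sufficient.
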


\begin{proof}
Suppose that $G$ contains $C_{\ref{SPnpC2}}$. By minimality of $G$, $G-\{u_1, u_2\}$ admits a $\rho(SP_9^+)$-coloring $\varphi$. By Property $P_{1, 9}$ of $\rho(SP_9^+)$, there are at least $9$ vertices that $u_1$ can be mapped to in order to extend $\varphi$ to a $\rho(SP_9^+)$-coloring of $G-{u_2}$. One of these vertices (in fact, 8 of them) does not have the same identity as $\varphi(v_2)$. We map $u_1$ to this vertex. By Property $P_{2, 4}$ of $\rho(SP_9^+)$ we can then color $u_2$ since $\varphi(u_1)$ 
and $\varphi(v_2)$ do not share the same identity. We have extended $\varphi$ to a $\rho(SP_9^+)$-coloring of $G$, a contradiction.
\end{proof}

\begin{lemma}\label{TRSP9_C3}
The graph $G$ does not contain Configuration $C_{\ref{SPnpC3}}$.
\end{lemma}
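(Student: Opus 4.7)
The plan is to delete the $2$-vertex $u_2$, invoke minimality on $G-u_2$, and then extend the obtained coloring to $G$, recoloring $u_1$ if necessary. In $G-u_2$, the vertex $u_1$ has degree $2$ instead of $3$, so $G-u_2$ has strictly fewer $3^+$-vertices than $G$; by the refined minimality assumption we therefore obtain a $\rho(SP_9^+)$-sp-coloring $\varphi$ of $G-u_2$.

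To color $u_2$, whose only neighbors are $u_1$ and $v_3$, I first compare the identities of $\varphi(u_1)$ and $\varphi(v_3)$. If they differ, Property $P_{2,4}$ of $\rho(SP_9^+)$ (Lemma~\ref{lem:PTRSP}) immediately yields $4$ valid colors for $u_2$ and the extension is complete.

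The case to handle carefully is when $\varphi(u_1)$ and $\varphi(v_3)$ share an identity. I then uncolor $u_1$ and look for a new color whose identity differs from that of $\varphi(v_3)$. In $G-u_2$, the colored neighbors of $u_1$ are only $v_1$ and $v_2$. If $\varphi(v_1)$ and $\varphi(v_2)$ have distinct identities, $P_{2,4}$ produces at least $4$ candidates for $u_1$; otherwise the validity of $\varphi$ forces the sign constraints coming from $v_1$ and $v_2$ to be compatible, and $P_{1,9}$ produces at least $9$ candidates. In both situations the candidates sit in pairwise distinct identity classes, since two antitwin colors would have opposite signs to $v_1$ and hence cannot both satisfy the same sign constraint. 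Consequently, at most one candidate shares identity with $\varphi(v_3)$. Choosing any other candidate for $u_1$ and then applying $P_{2,4}$ to the now-adjacent colors $\varphi(u_1), \varphi(v_3)$ yields $4$ valid colors for $u_2$, hence a $\rho(SP_9^+)$-sp-coloring of $G$ and the desired contradiction.

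The main obstacle is precisely the case where $\varphi(u_1)$ and $\varphi(v_3)$ initially share an identity, which forces us to recolor $u_1$. The structural fact that makes the argument go through is that the candidate sets produced by $P_{k,n}$ always consist of vertices in pairwise distinct identity classes, giving us enough room to avoid any single forbidden identity.
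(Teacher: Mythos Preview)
Your proof is correct and follows essentially the same route as the paper's: delete $u_2$, obtain a coloring of $G-u_2$ by (refined) minimality, and then recolor $u_1$ using $P_{2,4}$ so that its identity differs from $\varphi(v_3)$, after which $u_2$ can be colored via $P_{2,4}$. The only differences are cosmetic: the paper recolors $u_1$ unconditionally rather than first checking whether $\varphi(u_1)$ and $\varphi(v_3)$ already have distinct identities, and it invokes $P_{2,4}$ for $u_1$ without explicitly separating the subcase where $\varphi(v_1)$ and $\varphi(v_2)$ share an identity---a point you handle more carefully.
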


\begin{proof}
Suppose that $G$ contains $C_{\ref{SPnpC3}}$. By minimality of $G$, $G-\{u_2\}$ admits a $\rho(SP_9^+)$-coloring $\varphi$. By Property $P_{2, 4}$ 
of $\rho(SP_9^+)$, there are at least $4$ vertices that $u_1$ can be remapped to (including the one it is already mapped to in $\varphi$). These vertices cannot be antitwins so at least three of them do not have the same identity as $\varphi(v_3)$. We map $u_1$ to one of these three vertices. By Property $P_{2, 4}$ of $\rho(SP_9^+)$ we can then color $u_2$ since $\varphi(u_1)$ and $\varphi(v_3)$ do not share the same identity. We have extended $\varphi$ to a $\rho(SP_9^+)$-coloring of $G$, a contradiction.
\end{proof}

\begin{lemma}
The graph $G$ does not contain  Configuration $C_{\ref{SPnpC4}}$.
\end{lemma}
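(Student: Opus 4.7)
The plan is to imitate the pattern established for Lemmas~\ref{TRSP9_C0} through~\ref{TRSP9_C3}, but with a direct appeal to Property $P_{2,4}$ of $\rho(SP_9^+)$, since the two uncolored-neighbor's neighbours $v_1$ and $v_2$ of the 2-vertex $u$ are already adjacent in $G$ and must therefore be colored with vertices that form an edge of $\rho(SP_9^+)$.

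Concretely, I would first assume for contradiction that $G$ contains Configuration $C_{\ref{SPnpC4}}$: a 2-vertex $u$ whose two neighbors $v_1$, $v_2$ are themselves adjacent in $G$. By the minimality of $G$, the strictly smaller graph $G-u$ admits a $\rho(SP_9^+)$-sp-coloring $\varphi$. Since $v_1v_2 \in E(G)$, the pair $\{\varphi(v_1),\varphi(v_2)\}$ induces an edge in $\rho(SP_9^+)$ whose sign matches $s_G(v_1v_2)$; in particular, $\varphi(v_1)$ and $\varphi(v_2)$ are distinct and form a clique of size $2$ in the target.

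Next, I would apply Property $P_{2,4}$ to this $2$-clique with the sign vector $(s_G(uv_1), s_G(uv_2))$: by Lemma~\ref{lem:PrhoSP} there exist at least $4$ vertices in $\rho(SP_9^+)$ that are joined to $\varphi(v_1)$ by an edge of sign $s_G(uv_1)$ and to $\varphi(v_2)$ by an edge of sign $s_G(uv_2)$. Choosing any such vertex as the image of $u$ extends $\varphi$ to a $\rho(SP_9^+)$-sp-coloring of $G$, contradicting the assumption that $G$ is a counterexample.

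Unlike the preceding lemmas there is no real obstacle here: no recoloring of previously colored vertices is needed, and identities of colors play no role, because $v_1$ and $v_2$ are genuinely adjacent rather than merely sharing a neighbor. The only subtlety worth noting is that one must check that $P_{2,4}$ is applicable, which requires $\{\varphi(v_1),\varphi(v_2)\}$ to be a clique of $\rho(SP_9^+)$; this is immediate from the fact that $\varphi$ is an sp-homomorphism and $v_1v_2 \in E(G)$.
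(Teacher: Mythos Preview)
Your proof is correct and follows essentially the same approach as the paper: remove $u$, color $G-u$ by minimality, and extend to $u$ via Property $P_{2,4}$ applied to the adjacent pair $\varphi(v_1),\varphi(v_2)$. One small citation slip: Property $P_{2,4}$ for $\rho(SP_9^+)$ comes from Lemma~\ref{lem:PTRSP} (the lemma about $\rho(SP_q^+)$), not Lemma~\ref{lem:PrhoSP} (which concerns $\rho(SP_q)$).
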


\begin{proof}
Suppose that $G$ contains $C_{\ref{SPnpC4}}$. By minimality of $G$, $G-\{u\}$ admits a $\rho(SP_9^+)$-coloring $\varphi$. Since $v_1$ and $v_2$ are adjacent, $\varphi(v_1)$ and $\varphi(v_2)$ are also adjacent in $\rho(SP_9^+)$. We can therefore use Property $P_{2, 4}$ of $\rho(SP_9^+)$ to extend $\varphi$ to a $\rho(SP_9^+)$-coloring of $G$, a contradiction.
\end{proof}

\begin{lemma}\label{lem:TRSP9_C4}
The graph $G$ does not contain Configuration $C_{\ref{SPnpC5}}$.
\end{lemma}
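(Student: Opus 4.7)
The plan is to assume $G$ contains a 3-vertex $u$ with neighbors $v_1, v_2, v_3$ and derive a contradiction. Since Configuration $C_{\ref{SPnpC3}}$ is forbidden, no $v_i$ is a $2$-vertex, so each neighbor has degree at least $3$ in $G$. Deleting $u$ yields a graph $G-u$ with strictly fewer $3^+$-vertices than $G$ (at minimum, $u$ itself is lost from this count; some $v_i$ of degree exactly $3$ may further drop to degree $2$, which only lowers the $3^+$-count more). The refined minimality hypothesis — minimizing \emph{first} by $3^+$-vertex count — then guarantees that $G - u$ admits a $\rho(SP_9^+)$-sp-coloring $\varphi$.

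Next I would attempt to extend $\varphi$ to $u$. The key observation is that the only non-adjacent pairs in $\rho(SP_9^+)$ are antitwin pairs, which share an identity; hence if $\varphi(v_1), \varphi(v_2), \varphi(v_3)$ have three pairwise distinct identities, they form a $3$-clique in $\rho(SP_9^+)$, and Property $P_{3,1}$ from Lemma~\ref{lem:PTRSP} provides a color adjacent to each $\varphi(v_i)$ with the prescribed sign $s_G(uv_i)$, yielding a $\rho(SP_9^+)$-sp-coloring of $G$ and the desired contradiction.

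The remaining case is when two of the $\varphi(v_i)$, say $\varphi(v_1)$ and $\varphi(v_2)$, share an identity (so $\varphi(v_2) \in \{\varphi(v_1), \overline{\varphi(v_1)}\}$). The plan is to break this clash by recoloring $v_1$: one counts the at most $d_G(v_1)-1$ color constraints imposed by the other neighbors of $v_1$ in $G-u$ and uses Property $P_{2,4}$ (together with vertex-transitivity and antiautomorphism) to exhibit a color of a fresh identity, distinct from those of $\varphi(v_2)$ and $\varphi(v_3)$, that still respects all of $v_1$'s neighbors. After such a swap, the three neighbor identities are distinct and the previous paragraph applies. If this direct local recoloring is too constrained, the fallback is to apply minimality not to $G-u$ itself but to a modified graph such as $(G-u) + v_1v_2$ with a carefully chosen sign on the new edge — this graph still has strictly fewer $3^+$-vertices than $G$ and, since we remove three edges and one vertex while adding only one edge, one checks $\mad \leq \mad(G) < \tfrac{17}{5}$ so the minimality hypothesis applies.

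The hard part will be executing this identity-separation step cleanly: guaranteeing that the recoloring (or the auxiliary graph construction) genuinely lies within the class of graphs to which minimality applies, while simultaneously respecting the constraints from $v_1$'s neighbors in $G - u$. The careful bookkeeping of degrees and of $\mad$, combined with the strategic use of $P_{1,9}$, $P_{2,4}$ and $P_{3,1}$, is where the subtlety of the argument concentrates.
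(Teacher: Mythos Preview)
Your overall shape is right—delete $u$, invoke minimality, use $P_{3,1}$ when the three identities are distinct—but the identity-collision case has a genuine gap. Your recoloring plan for $v_1$ assumes you can find a new color respecting all of $v_1$'s neighbors in $G-u$; however, $v_1$ may have arbitrarily large degree (nothing bounds it), so Property $P_{2,4}$ gives you no leverage there. Your fallback of passing to $(G-u)+v_1v_2$ also does not work as stated: it handles only one of the three pairs, it may create a multi-edge if $v_1v_2\in E(G)$, and the $\mad$ bound is \emph{not} preserved. Indeed, if $H'\subseteq (G-u)+v_1v_2$ contains the new edge, the corresponding $H\subseteq G$ (replace $v_1v_2$ by $u,uv_1,uv_2$) satisfies $|V(H)|=|V(H')|+1$ and $|E(H)|=|E(H')|+1$, and from $2|E(H')|\ge\tfrac{17}{5}|V(H')|$ one cannot deduce $2|E(H)|\ge\tfrac{17}{5}|V(H)|$ since $2<\tfrac{17}{5}$.

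The paper avoids all of this with a different gadget: instead of adding edges, it replaces $u$ by three new \emph{2-vertices} $u_1,u_2,u_3$, with $u_k$ joining $v_i$ and $v_j$ (for $\{i,j,k\}=\{1,2,3\}$) via edges of signs $s_G(v_iu)$ and $s_G(v_ju)$. The $\mad$ computation then goes through (net change: $+2$ vertices, $+3$ edges, and one checks each subgraph case). More importantly, the punchline is not ``force distinct identities'' but rather ``force compatible constraints'': if $\varphi(v_i)$ and $\varphi(v_j)$ share an identity in the resulting coloring, the 2-vertex $u_k$ between them forces either $\varphi(v_i)=\varphi(v_j)$ with $s_G(v_iu)=s_G(v_ju)$, or $\varphi(v_i)=\overline{\varphi(v_j)}$ with $s_G(v_iu)=-s_G(v_ju)$; in both cases $v_i$ and $v_j$ impose the \emph{same} constraint on $u$, so one drops to at most two distinct constraints and finishes with $P_{2,4}$ or $P_{1,9}$. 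This sidesteps any need to recolor high-degree vertices.
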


\begin{proof}
Suppose that $G$ contains $C_{\ref{SPnpC5}}$. We create a graph $G'$ by removing $u$ from $G$ and adding three 2-vertices $u_1$, $u_2$ and $u_3$ according to Figure~\ref{fig:SPnpC2b} with $s_{G'}(v_1 u_2) = s_{G'}(v_1 
u_3) = s_{G}(v_1 u)$, $s_{G'}(v_2 u_1) = s_{G'}(v_2 u_3) = s_{G}(v_2 u)$ and $s_{G'}(v_3 u_1) = s_{G'}(v_3 u_2) = s_{G}(v_3 u)$. \newline

\begin{figure}[H]
\centering
    \scalebox{1}
	    {
            \begin{tikzpicture}[thick]
                    \tikzstyle{vertex}=[circle,minimum width=1em]
                    \tikzstyle{vertex2}=[rectangle, minimum width=1em, minimum height=1em, fill]

        		    \node[draw, vertex2] (1) at (0-30:1) {};
        		    \node[draw, vertex2] (2) at (120-30:1) {};
        		    \node[draw, vertex2] (3) at (240-30:1) {};
        		    
        		    \node[draw, vertex, fill] (1') at (0+30:0.8) {};
        		    \node[draw, vertex, fill] (2') at (120+30:0.8) {};
        		    \node[draw, vertex, fill] (3') at (240+30:0.8) {};
        		    
        		    \draw (1.north) node[above, shift={(0.18,0)}]{$v_3$};
        		    \draw (3.north) node[above, shift={(-0.18,0)}]{$v_2$};
        		    \draw (2.north) node[above]{$v_1$};
        		    
        		    \draw (1'.north) node[above]{$u_2$};
        		    \draw (3'.north) node[above]{$u_1$};
        		    \draw (2'.north) node[above]{$u_3$};
        		    
        		    \draw (1) -- (1');
        		    \draw (1') -- (2);
        		    \draw (2) -- (2');
        		    \draw (2') -- (3);
        		    \draw (3) -- (3');
        		    \draw (3') -- (1);
                \end{tikzpicture}  
        }
    \caption{The graph $G'$ of Lemma~\ref{lem:TRSP9_C4}. 
    \label{fig:SPnpC2b}}
\end{figure}
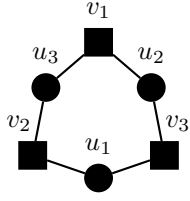

We first prove that $G'$ is smaller than $G$ and that $\mad(G') <  \frac{17}{5}$ in order to prove that $G'$ admits a $\rho(SP_9^+)$-coloring. Using this coloring we then show that $G$ can be colored with $\rho(SP_9^+)$, a contradiction.

Vertices $v_1$, $v_2$ and $v_3$ are $3^+$-vertices in $G$ since Configuration $C_{\ref{SPnpC3}}$ forbids a 2-vertex adjacent to a 3-vertex. Hence, 
$G'$ has less $3^+$-vertices than $G$ so $G'$ is smaller than $G$. \newline

In order tho prove that $\mad(G') <  \frac{17}{5}$, we need to show that a subgraph of $G'$ of maximal average degree has average degree smaller than $\frac{17}{5}$. We use the fact that every subgraph of $G$ has average degree smaller than $\frac{17}{5}$.

Suppose that a subgraph of maximal average degree does not contain $u_1$, 
$u_2$ or $u_3$. The same subgraph in $G$ has the same average degree which is smaller than $\frac{17}{5}$.

Suppose that a subgraph of maximal average degree contains $u_1$ but not $u_2$ or $u_3$. The same subgraph in $G$ with $u$ instead of $u_1$ has the same average degree which is smaller than $\frac{17}{5}$.

Suppose that a subgraph of maximal average degree contains $u_1$ and $u_2$ but not $u_3$. We call this subgraph $H'$. We call $H$ the same subgraph in $G$ with $u$ instead of $u_1$ and $u_2$. The three vertices $v_1$, $v_2$ and $v_3$ must be in this subgraph otherwise we would have at least one vertex of degree 0 or 1 in $H'$ which is not possible since the same subgraph without this vertex would have a greater average degree. Note that we have: $|V(H)| = |V(H')| - 1$ and $|E(H)| = |E(H')| - 1$. The average degree of $H'$ is $\frac{2|E(H')|}{|V(H')|}$. Suppose that this average degree is greater than or equal to $\frac{17}{5}$:

\begin{align*}
\frac{2\cdot|E(H')|}{|V(H')|} &\geq \frac{17}{5}\\
2 \cdot |E(H')| &\geq \frac{17}{5} \cdot |V(H')|\\
2 \cdot |E(H')| - 2 &\geq \frac{17}{5} \cdot |V(H')| -2 \geq  \frac{17}{5} \cdot |V(H')| -1\cdot\frac{17}{5}\\
2 \cdot (|E(H')| - 1) &\geq \frac{17}{5} \cdot (|V(H')| -1)\\
2 \cdot |E(H)| &\geq \frac{17}{5} \cdot |V(H)| \\
\frac{2\cdot|E(H)|}{|V(H)|} &\geq \frac{17}{5}
\end{align*}

We have a contradiction.

We proceed in a similar manner for the case in which a subgraph of $G'$ with maximal average degree contains $u_1$, $u_2$ and $u_3$.\newline

If $\varphi(v_1)$, $\varphi(v_2)$ and $\varphi(v_3)$ all have different identities, we can find a color for $u$ to extend $\varphi$ to $G$ by using Property $P_{3, 1}$ of $\rho(SP_9^+)$.

If there are two vertices $\varphi(v_i)$ and $\varphi(v_j)$ that share the same identity, they can either be colored with  the same color or colors that are antitwins. If they have the same color, we must have $s(v_i u) 
= s(v_j u)$ (because of the way we constructed $G'$) and $v_i$ and $v_j$ induce the same constraints on $u$. If they have colors that are antitwins, we must have $s(v_i u) =-s(v_j u)$ (because of the way we constructed $G'$) and $v_i$ and $v_j$ induce the same constraints on $u$.

We can always extend $\varphi$ to a $\rho(SP_9^+)$-coloring of $G$, a contradiction.
\end{proof}

\begin{lemma}
The graph $G$ does not contain Configurations $C_{\ref{SPnpC6}}$.
\end{lemma}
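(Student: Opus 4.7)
The plan is to assume that $G$ contains Configuration $C_{\ref{SPnpC6}}$ and derive a contradiction with the minimality of $G$, following the reduction-and-extension pattern of Lemmas~\ref{lem:SP5C0}--\ref{lem:SP5C5}. Let $u$ be its central vertex, with $t$ 2-neighbors, $b$ bad neighbors and no good neighbors, so that $d(u)=t+b$. We first build a strictly smaller signed graph $G'$ by removing from $G$ the vertex $u$, all the 2-neighbors of $u$, all the bad neighbors of $u$, and all the 2-vertex satellites of those bad neighbors that are not already 2-neighbors of $u$. Since $G'$ is an induced subgraph of $G$, we have $\mad(G')\le\mad(G)<\frac{17}{5}$, and by minimality $G'$ admits a $\rho(SP_9^+)$-sp-coloring $\varphi$. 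The contradiction will come from extending $\varphi$ back to $G$ by choosing, in order, a color for $u$, then a color for each bad neighbor of $u$, and finally a color for each uncolored 2-vertex.

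The heart of the argument is a pair of auxiliary claims in the spirit of Claims~\ref{c:SP5-2} and~\ref{c:SP5-3b}. First, every 2-neighbor of $u$ forbids exactly one color from $u$: depending on the signs of the two edges incident to the 2-vertex, the ``wrong'' element of the identity of its far endpoint is unreachable, and the proof copies Claim~\ref{c:SP5-2} verbatim, using $P_{1,9}$ and $P_{2,4}$. Second, and more delicate, every bad neighbor of $u$ forbids at most four colors from $u$. For a degree-$4$ bad vertex $w$ with a single 2-neighbor, we would invoke $P_{3,1}$ on the triple consisting of $\varphi(u)$ and the two remaining colored neighbors $x,y$ of $w$: a color for $w$ exists as soon as $\varphi(u),\varphi(x),\varphi(y)$ form a clique in $\rho(SP_9^+)$, which forbids at most the two identities of $\varphi(x)$ and $\varphi(y)$ from $u$, i.e.\ at most four colors. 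For a degree-$6$ bad vertex $w$ with four 2-neighbors $z_1,\dots,z_4$, we would use $P_{2,4}$ twice: first to obtain four candidate colors for $w$ adjacent to $\varphi(u)$ and the unique other non-2-neighbor of $w$, and then to argue that these four candidates suffice to accommodate the four satellites $z_1,\dots,z_4$ while again forbidding at most four colors from $u$.

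Once these two estimates are in place, the case $b\le 2$ is immediate: the total number of forbidden colors for $u$ is at most $t+4b<20=|V(\rho(SP_9^+))|$, so at least one color remains available and we obtain a $\rho(SP_9^+)$-sp-coloring of $G$, contradicting the choice of $G$. For the case $b=3$ and $t<3$, the naive count still gives $t+12<20$, but the bound ``four per bad neighbor'' can be loose when several bad neighbors share identities among their colored satellites; here we would refine the count by inspecting the pairwise intersections of the identity pairs produced by the three bad neighbors and then apply $P_{3,1}$ on the cliques they induce with $\varphi(u)$.

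We expect the main obstacle to be the degree-$6$ bad case in combination with $b=3$. The bound of four forbidden colors per bad neighbor is tight precisely when all implicated identities are distinct across the bad neighbors, and the careful handling of overlapping identities --- together with the joint $P_{2,4}$-extension to the four 2-vertex satellites of each degree-$6$ bad neighbor --- requires the most intricate bookkeeping. Everything else is a direct adaptation of the tools already used in Section~\ref{sec:thSP5}.
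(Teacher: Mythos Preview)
Your reduction differs from the paper's in a way that creates a genuine obstruction. You delete each bad neighbor $w$ (together with its $2$-vertex satellites) before taking the coloring $\varphi$ of $G'$. For a degree-$4$ bad $w$ with non-$u$, non-$2$ neighbors $x,y$, nothing in $G'$ links $x$ and $y$, so $\varphi(x)$ and $\varphi(y)$ may end up with the \emph{same} identity while $s_G(xw)\neq s_G(yw)$ (or antitwin colors with equal signs); in that situation \emph{no} color is admissible for $w$, whatever you choose for $u$, and your $P_{3,1}$ step cannot even be invoked since $\{\varphi(u),\varphi(x),\varphi(y)\}$ is never a clique. The paper avoids this by setting $G'=G-u$ only: since $\varphi(w)$ already exists in $G'$, the constraints from $x$ and $y$ are automatically compatible, and one then asks how many \emph{recolorings} of $w$ (and its satellites) are available, translating that into colors forbidden at $u$ via computer-verified counting tables (their Propositions~\ref{prop:cf} and~\ref{prop:P24*}). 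Your satellite step is also incomplete: after $P_{3,1}$ you have a single candidate for $w$, which may share the identity of the satellite's far endpoint, blocking $P_{2,4}$; the paper instead first gets several candidates for $w$ from $P_{2,4}$ (or $P_{1,9}$), peels off one per satellite, and only then reads off what is forbidden at $u$.

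The second gap is the interaction between bad neighbors. Your ``$\le 4$ forbidden per bad neighbor'' is stated as if the contributions were independent, but two bad neighbors of $u$ can be adjacent to each other or share a common $2$-vertex (note $C_{\ref{SPnpC4}}$ only rules out both at once), and then recoloring one constrains the other. The paper does not get additivity for free: for $b=2$ it runs a separate case analysis on each adjacency/shared-$2$-vertex pattern, and for $b=3$ it introduces three further computer-checked facts (Propositions~\ref{prop:19-5-4}, \ref{prop:7-5}, \ref{prop:7-7}) and works through twenty configurations to conclude that the three bad neighbors together forbid at most $17$ colors. Your sketch for $b=3$ (``refine the count by inspecting pairwise intersections of identity pairs'') does not approach this level of detail, and the degree-$6$ argument via ``$P_{2,4}$ twice'' does not explain why four candidates for $w$ survive four satellite constraints. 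The fix is to mirror the paper: delete only $u$, compute the list of admissible recolorings for each bad neighbor from its already-colored environment, and bound the union of colors they exclude at $u$, accepting that the non-independence forces a substantial case analysis.
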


\begin{proof}
Suppose that $G$ contains Configuration $C_{\ref{SPnpC6}}$. Let $G'=G - 
u$. By minimality of $G$, there exists a homomorphism $\varphi$ from $G'$ 
to $\rho(SP_9^+)$. We want to show that we can extend $\varphi$ into a homomorphism $\varphi'$ from $G$ to $\rho(SP_9^+)$. To do that, we will show that among the $20$ colors that are available (i.e. the $20$ vertices of $\rho(SP_9^+)$), at most $t + 4\cdot b$ are forbidden for $u$ by its bad-neighbors and 2-neighbors if $b \leq 2$ or at most $t + 17$ if $b = 3$. \newline

We introduce the following two propositions that were found using a case analysis on a computer.

\begin{prop}
\label{prop:cf}
Given a set $C$ of $c$ vertices in $\rho(SP_9^+)$ there are at most $f$ vertices that are not positive neighbors (or alternatively negative neighbors) to any of the vertices in $C$:

\begin{tabular}{c|ccccccc}
 $c$ & $0$ & $1$ & $2$ & $3$-$4$ & $5$-$6$ & $7$-$11$ & $12$-$20$ \\ \hline
 $f$ & $20$ & $11$ & $6$ & $4$ & $2$ & $1$ & $0$ 
\end{tabular}
\end{prop}
In other words, if there are $c$ \textit{choices} of colors available for 
a vertex when coloring a graph with $\rho(SP_9^+)$, these $c$ choices \textit{forbid} at most $f$ colors from a neighboring vertex.

\begin{prop}
\label{prop:P24*}
Given a set $C$ of $4$ vertices in $\rho(SP_9^+)$ such that this set can be the result of Property $P_{2, 4}$, there are at most $2$ vertices that 
are not positive neighbors (or alternatively negative neighbors) to any of the vertices in $C$. 

After removing one of the $4$ vertices of $C$, there are at most $3$ vertices that are not positively adjacent (or alternatively negatively adjacent) to any of the $3$ remaining vertices in $C$.
\end{prop}

We now need to prove that each $2$-neighbor of $u$ forbids at most $1$ color from $u$, each bad-neighbor of $u$ forbids at most $4$ colors from $u$ if $b \leq 2$ and three bad neighbors forbid at most $17$ colors from $u$.\newline

\textbf{2-neighbors:} By Property $P_{1, 9}$ of $\rho(SP_9^+)$, a 2-neighbor $v$ of $u$ can be colored in $9$ colors with respect to the color of its neighbor that is not $u$. By Proposition~\ref{prop:cf}, since $v$ can 
be colored in at least $9$ colors, $v$ forbids at most $1$ color from $u$. In other words, $u$ can be colored in at least $19$ colors such that there is at least one of the $9$ colors available for $v$ that is a positive neighbor (or alternatively a negative neighbor) in $\rho(SP_9^+)$ of that color.\newline

\textbf{bad-neighbors:} Note that since Configuration $C_4$ is forbidden, 
a 2-neighbor of $u$ cannot be adjacent to a bad-neighbor of $u$. We consider the following cases:
\begin{itemize}
\item $u$ is adjacent to one bad-vertex $v$:
\begin{itemize}
\item $v$ has degree 4: By Property $P_{2, 4}$ of $\rho(SP_9^+)$ and the fact that a 2-neighbor forbids 1 color, we can use Property~\ref{prop:P24*} to show that $v$ forbids at most $3$ colors from $u$.
\item $v$ has degree 6: By Property $P_{1, 9}$ of $\rho(SP_9^+)$ and the fact that a 2-neighbor forbids 1 color, there are at least $5$ colors available for $v$. By Property~\ref{prop:cf}, $v$ forbids at most $2$ colors 
from $u$.
\end{itemize}
\item $u$ is adjacent to two bad-vertices $v_1$ and $v_2$: Note that since Configuration $C_4$ is forbidden, $v_1$ and $v_2$ cannot be both adjacent and adjacent to the same 2-vertex.
\begin{itemize}
\item $v_1$ and $v_2$ are neither adjacent nor adjacent to a same 2-vertex: For the same reasons as before we know that $v_1$ and $v_2$ each forbid at most $3$ colors.
\item $v_1$ and $v_2$ are adjacent:
\begin{itemize}
\item $v_1$ and $v_2$ are 4-vertices: By Property $P_{1, 9}$ of $\rho(SP_9^+)$ and the fact that a 2-neighbor forbids 1 color, there are at least $8$ colors available for both $v_1$ and $v_2$. A case study by computer reveals that together $v_1$ and $v_2$ forbid at most $2$ colors from $u$.
\item $v_1$ and $v_2$ are 6-vertices: By the fact that a 2-neighbor forbids 1 color, there are at least $16$ colors available for both $v_1$ and $v_2$. This gives us less constraints than the case in which $v_1$ and $v_2$ are 4-vertices. Therefore, together $v_1$ and $v_2$ forbid at most $2$ 
colors from $u$.
\item $v_1$ is a 4-vertex and $v_2$ is a 6-vertex: By Property $P_{1, 9}$ 
of $\rho(SP_9^+)$ and the fact that a 2-neighbor forbids 1 color, there are at least $8$ colors available for $v_1$ and $16$ for $v_2$. A case study by computer reveals that together $v_1$ and $v_2$ forbid at most $2$ colors from $u$.
\end{itemize}
\item $v_1$ and $v_2$ are adjacent to the same 2-vertex:
\begin{itemize}
\item $v_1$ and $v_2$ are 4-vertices: Let $w$ be that 2-vertex, $e_1 = uv_1$, $e_2 = wv_1$, $e_3 = u v_2$ and $e_4 = wv_2$. Suppose that cycle $(u, v_1, w, v_2)$ is balanced. By Theorem~\ref{thm:Z} we can without loss of generality switch a set of vertices such that $s(e_1) = s(e_2)$ and $s(e_3) = s(e_4)$. Therefore, $v_1$ (resp. $v_2$) create the same constraints on both $u$ and $w$ and it suffices to give $w$ the same color as $u$ By Property $P_{2, 4}$ of $\rho(SP_9^+)$ and Property~\ref{prop:P24*} $v_1$ and $v_2$ each forbid at most $2$ colors from $u$. We can thus assume that $(u, v_1, w, v_2)$ is unbalanced. By Theorem~\ref{thm:Z} we can without loss of generality switch a set of vertices such that $s(e_2) = -s(e_4)$. Notice that if we give $v_1$ and $v_2$ different colors 
we can color $w$ by Property $P_{2, 4}$ of $\rho(SP_9^+)$. Let $S_1$ (resp. $S_2$) be the set of possible colors for $v_1$ (resp. $v_2$). By Property $P_{2, 4}$ of $\rho(SP_9^+)$ we know that $|S_1| = |S_2| = 4$. We 
consider the following cases:
\begin{itemize}
\item $|S_1 \cap S_2| = 0$: By Property~\ref{prop:P24*}, $v_1$ and $v_2$ forbid at most $4$ colors from $u$.
\item $|S_1 \cap S_2| = 1$: We remove the common color from $S_1$ such that we can always apply Property $P_{2, 4}$ to color $w$. By Property~\ref{prop:P24*} $v_1$ and $v_2$ forbid at most $5$ colors from $u$.
\item $|S_1 \cap S_2| = 2$: We remove one of the common colors from $S_1$ and the other from $S_2$. By Property~\ref{prop:P24*} $v_1$ and $v_2$ forbid at most $6$ colors from $u$.
\item $|S_1 \cap S_2| = 3$: A case study by computer reveals that this case is not possible.
\item $|S_1 \cap S_2| = 4$: A case study by computer reveals that $v_1$ 
and $v_2$ forbid at most $6$ colors from $u$.
\end{itemize}
\item $v_1$ and $v_2$ are 6-vertices: Suppose that there is only one 2-vertex $w$ adjacent to both $v_1$ and $v_2$. By Property $P_{1, 9}$ of $\rho(SP_9^+)$ and the fact that 2-neighbors forbid at most one color we have 
at least $6$ colors available for $v_1$ and $v_2$. Let $S_1$ and $S_2$ be 
the set of available colors for $v_1$ and $v_2$. Notice that it is not possible for two of the colors in $S_1$ (resp. $S_2$) to be antitwins (they 
would need to be adjacent to a vertex in $\rho(SP_9^+)$ with the same kind of edge). Let us keep only $3$ colors from $S_1$ and $S_2$ such that we 
do not have two colors $c_1 \in S_1$ and $c_2 \in S_2$ such that $c_1 = 
c_2$ or $c_1$ and $c_2$ are antitwins. Choosing any color for $v_1$ and $v_2$ now always allows us to color $w$ by Property $P_{2, 4}$. By Property~\ref{prop:cf}, $v_1$ and $v_2$ each forbid at most $4$ colors from $u$. 
If there are more than one 2-vertex that $v_1$ and $v_2$ are adjacent to we can still apply the same reasoning (except there will be more colors available for $v_1$ and $v_2$).
\item $v_1$ is a 4-vertex and $v_2$ is a 6-vertex: We follow the same reasoning as before and by Property $P_{1, 9}$ and $P_{2, 4}$ of $\rho(SP_9^+)$ and the fact that 2-neighbors forbid at most one color we have at least $4$ colors available for $v_1$ and $6$ for $v_2$. We can guarantee at least 3 distinct colors for $v_1$ and $v_2$ which means by Property~\ref{prop:cf} and \ref{prop:P24*} that they forbid at most $7$ colors from $u$.
\end{itemize}
\end{itemize}
\item $u$ is adjacent to three bad-vertices $v_1$, $v_2$ and $v_3$: By computer, we computed the following properties:

\begin{prop}
\label{prop:19-5-4}
If a vertex $u$ is adjacent to three pairwise adjacent vertices $v_1$, $v_2$ and $v_3$ such that $v_1$ can be colored in $19$ colors, $v_2$ in $5$ 
colors and $v_3$ in $4$ colors then these vertices forbid at most $17$ colors from $u$. 
\end{prop}

\begin{prop}
\label{prop:7-5}
If a vertex $u$ is adjacent to two adjacent vertices $v_1$ and $v_2$ such 
that $v_1$ can be colored in $7$ colors and $v_2$ in $5$ colors then these vertices forbid at most $13$ colors from $u$. 
\end{prop}

\begin{prop}
\label{prop:7-7}
If a vertex $u$ is adjacent to adjacent vertices $v_1$ and $v_2$ such that $v_1$ can be colored in $7$ colors and $v_2$ in the same $7$ colors then these vertices forbid at most $11$ colors from $u$.
\end{prop}

We have 20 cases to consider since $v_1$, $v_2$ and $v_3$ can be of degree 4 or 6 and each pair can either be adjacent, adjacent to the same 2-vertex (or vertices) or neither of those since Configuration $C_4$ is forbidden. The following pictures represent these 20 cases. Note that we do not 
need to consider cases in which a $v_i$ is not adjacent nor adjacent to the same 2-vertex as another of the bad vertices. In such a case we can consider $v_i$ and the other two bad vertices independently using the same reasoning we used when $u$ is adjacent to only one or two bad vertices to 
show that in total $v_1$, $v_2$ and $v_3$ forbid at most 17 colors from $u$. In the picture, the number inside the vertex corresponds to its degree and the number above the vertex corresponds to the number of available colors by Property $P_{1, 9}$ or $P_{2, 4}$ of $\rho(SP_9^+)$ and the fact that 2-neighbors forbid at most one color. Dashed lines represent two vertices that are adjacent to the same 2-vertex. The vertex on the left is 
$v_1$, in the middle $v_2$ and on the right $v_3$. We denote by $S_1$, $S_2$ and $S_3$ the sets of colors available for $v_1$, $v_2$ and $v_3$ respectively. 

\begin{figure}[H]
    \centering
    \begin{subfigure}[b]{0.24\textwidth}
        \centering 
            \scalebox{1}
	        {
                \begin{tikzpicture}[thick]
                     \tikzstyle{vertex}=[circle,minimum width=1em]
                    \tikzstyle{vertex2}=[rectangle, minimum width=1em, minimum height=1em, fill]
        		
        		    \node[draw, vertex] (0) at (-1,0) {$4$};
        		    \node[draw, vertex] (1) at (0,0) {$4$};
        		    \node[draw, vertex] (2) at (1,0) {$4$};
        		    
        		    \draw (1.north) node[above]{$19$};
        		    \draw (2.north) node[above]{$19$};
        		    \draw (0.north) node[above]{$19$};
        		    
        		    \draw (0) -- (1);
        		    \draw (1) -- (2);
        		    \draw (0) edge[bend right] (2);
        		    
                \end{tikzpicture} 
            }
        \caption*{Case 1} 
    \end{subfigure} 
    \begin{subfigure}[b]{0.24\textwidth} 
        \centering 
            \scalebox{1}
	        {
                \begin{tikzpicture}[thick]
                     \tikzstyle{vertex}=[circle,minimum width=1em]
                    \tikzstyle{vertex2}=[rectangle, minimum width=1em, minimum height=1em, fill]
        		
        		    \node[draw, vertex] (0) at (-1,0) {$4$};
        		    \node[draw, vertex] (1) at (0,0) {$4$};
        		    \node[draw, vertex] (2) at (1,0) {$4$};
        		    
        		    \draw (0.north) node[above]{$8$};
        		    \draw (1.north) node[above]{$19$};
        		    \draw (2.north) node[above]{$8$};
        		    
        		    \draw (0) -- (1);
        		    \draw (1) -- (2);
        		    
                \end{tikzpicture} 
            }
        \caption*{Case 2} 
    \end{subfigure} 
    \begin{subfigure}[b]{0.24\textwidth} 
        \centering 
            \scalebox{1}
	        {
                \begin{tikzpicture}[thick]
                     \tikzstyle{vertex}=[circle,minimum width=1em]
                    \tikzstyle{vertex2}=[rectangle, minimum width=1em, minimum height=1em, fill]
        		
        		    \node[draw, vertex] (0) at (-1,0) {$4$};
        		    \node[draw, vertex] (1) at (0,0) {$4$};
        		    \node[draw, vertex] (2) at (1,0) {$4$};
        		    
        		    \draw (0.north) node[above]{$9$};
        		    \draw (1.north) node[above]{$19$};
        		    \draw (2.north) node[above]{$9$};
        		    
        		    \draw (0) -- (1);
        		    \draw (1) -- (2);
        		    \draw (0) edge[bend right, dashed] (2);
        		    
                \end{tikzpicture} 
            }
        \caption*{Case 3} 
    \end{subfigure} 
    \begin{subfigure}[b]{0.24\textwidth} 
        \centering 
            \scalebox{1}
	       {
                \begin{tikzpicture}[thick]
                     \tikzstyle{vertex}=[circle,minimum width=1em]
                    \tikzstyle{vertex2}=[rectangle, minimum width=1em, minimum height=1em, fill]
        		
        		    \node[draw, vertex] (0) at (-1,0) {$4$};
        		    \node[draw, vertex] (1) at (0,0) {$4$};
        		    \node[draw, vertex] (2) at (1,0) {$4$};
        		    
        		    \draw (0.north) node[above]{$8$};
        		    \draw (1.north) node[above]{$9$};
        		    \draw (2.north) node[above]{$4$};
        		    
        		    \draw (0) -- (1);
        		    \draw[dashed] (1) -- (2);
        		    
                \end{tikzpicture} 
            }
        \caption*{Case 4} 
    \end{subfigure} 
    
    \centering
    
    \begin{subfigure}[b]{0.24\textwidth} 
        \centering 
            \scalebox{1}
	        {
                \begin{tikzpicture}[thick]
                     \tikzstyle{vertex}=[circle,minimum width=1em]
                    \tikzstyle{vertex2}=[rectangle, minimum width=1em, minimum height=1em, fill]
        		
        		    \node[draw, vertex] (0) at (-1,0) {$6$};
        		    \node[draw, vertex] (1) at (0,0) {$6$};
        		    \node[draw, vertex] (2) at (1,0) {$6$};
        		    
        		    \draw (0.north) node[above]{$19$};
        		    \draw (1.north) node[above]{$17$};
        		    \draw (2.north) node[above]{$6$};
        		    
        		    \draw[dashed] (0) -- (1);
        		    \draw[dashed] (1) -- (2);
        		    \draw (0) edge[bend right] (2);
        		    
                \end{tikzpicture} 
            }
        \caption*{Case 5} 
    \end{subfigure} 
    \begin{subfigure}[b]{0.24\textwidth} 
        \centering 
            \scalebox{1}
	        {
                \begin{tikzpicture}[thick]
                     \tikzstyle{vertex}=[circle,minimum width=1em]
                    \tikzstyle{vertex2}=[rectangle, minimum width=1em, minimum height=1em, fill]
        		
        		    \node[draw, vertex] (0) at (-1,0) {$6$};
        		    \node[draw, vertex] (1) at (0,0) {$6$};
        		    \node[draw, vertex] (2) at (1,0) {$6$};
        		    
        		    \draw (0.north) node[above]{$7$};
        		    \draw (1.north) node[above]{$7$};
        		    \draw (2.north) node[above]{$7$};
        		    
        		    \draw[dashed] (0) -- (1);
        		    \draw[dashed] (1) -- (2);
        		    \draw[dashed] (0) edge[bend right] (2);
        		    
                \end{tikzpicture} 
            }
        \caption*{Case 6} 
    \end{subfigure} 
    \begin{subfigure}[b]{0.24\textwidth} 
        \centering 
            \scalebox{1}
	        {
                \begin{tikzpicture}[thick]
                     \tikzstyle{vertex}=[circle,minimum width=1em]
                    \tikzstyle{vertex2}=[rectangle, minimum width=1em, minimum height=1em, fill]
        		
        		    \node[draw, vertex] (0) at (-1,0) {$6$};
        		    \node[draw, vertex] (1) at (0,0) {$6$};
        		    \node[draw, vertex] (2) at (1,0) {$6$};
        		    
        		    \draw (0.north) node[above]{$16$};
        		    \draw (1.north) node[above]{$17$};
        		    \draw (2.north) node[above]{$6$};
        		    
        		    \draw (0) -- (1);
        		    \draw[dashed] (1) -- (2);
        		    
                \end{tikzpicture} 
            }
        \caption*{Case 7} 
    \end{subfigure} 
    \begin{subfigure}[b]{0.24\textwidth} 
        \centering 
            \scalebox{1}
	        {
                \begin{tikzpicture}[thick]
                     \tikzstyle{vertex}=[circle,minimum width=1em]
                    \tikzstyle{vertex2}=[rectangle, minimum width=1em, minimum height=1em, fill]
        		
        		    \node[draw, vertex] (0) at (-1,0) {$4$};
        		    \node[draw, vertex] (1) at (0,0) {$6$};
        		    \node[draw, vertex] (2) at (1,0) {$4$};
        		    
        		    \draw (0.north) node[above]{$19$};
        		    \draw (1.north) node[above]{$17$};
        		    \draw (2.north) node[above]{$19$};
        		    
        		    \draw (0) -- (1);
        		    \draw[dashed] (1) -- (2);
        		    \draw (0) edge[bend right] (2);
        		    
                \end{tikzpicture} 
            }
        \caption*{Case 8} 
    \end{subfigure}
    
    \centering
    
    \begin{subfigure}[b]{0.24\textwidth} 
        \centering 
            \scalebox{1}
	        {
                \begin{tikzpicture}[thick]
                     \tikzstyle{vertex}=[circle,minimum width=1em]
                    \tikzstyle{vertex2}=[rectangle, minimum width=1em, minimum height=1em, fill]
        		
        		    \node[draw, vertex] (0) at (-1,0) {$4$};
        		    \node[draw, vertex] (1) at (0,0) {$6$};
        		    \node[draw, vertex] (2) at (1,0) {$4$};
        		    
        		    \draw (0.north) node[above]{$9$};
        		    \draw (1.north) node[above]{$7$};
        		    \draw (2.north) node[above]{$9$};
        		    
        		    \draw[dashed] (0) -- (1);
        		    \draw[dashed] (1) -- (2);
        		    \draw (0) edge[bend right] (2);
        		    
                \end{tikzpicture} 
            }
        \caption*{Case 9} 
    \end{subfigure}
    \begin{subfigure}[b]{0.24\textwidth} 
        \centering 
            \scalebox{1}
	       {
                \begin{tikzpicture}[thick]
                     \tikzstyle{vertex}=[circle,minimum width=1em]
                    \tikzstyle{vertex2}=[rectangle, minimum width=1em, minimum height=1em, fill]
        		
        		    \node[draw, vertex] (0) at (-1,0) {$4$};
        		    \node[draw, vertex] (1) at (0,0) {$6$};
        		    \node[draw, vertex] (2) at (1,0) {$4$};
        		    
        		    \draw (0.north) node[above]{$8$};
        		    \draw (1.north) node[above]{$17$};
        		    \draw (2.north) node[above]{$4$};
        		    
        		    \draw (0) -- (1);
        		    \draw[dashed] (1) -- (2);
        		    
                \end{tikzpicture} 
            }
        \caption*{Case 10} 
    \end{subfigure}
    \begin{subfigure}[b]{0.24\textwidth} 
        \centering 
            \scalebox{1}
	        {
                \begin{tikzpicture}[thick]
                     \tikzstyle{vertex}=[circle,minimum width=1em]
                    \tikzstyle{vertex2}=[rectangle, minimum width=1em, minimum height=1em, fill]
        		
        		    \node[draw, vertex] (0) at (-1,0) {$4$};
        		    \node[draw, vertex] (1) at (0,0) {$6$};
        		    \node[draw, vertex] (2) at (1,0) {$4$};
        		    
        		    \draw (0.north) node[above]{$4$};
        		    \draw (1.north) node[above]{$7$};
        		    \draw (2.north) node[above]{$4$};
        		    
        		    \draw[dashed] (0) -- (1);
        		    \draw[dashed] (1) -- (2);
        		    
                \end{tikzpicture} 
            }
        \caption*{Case 11} 
    \end{subfigure}
    \begin{subfigure}[b]{0.24\textwidth} 
        \centering 
            \scalebox{1}
	        {
                \begin{tikzpicture}[thick]
                     \tikzstyle{vertex}=[circle,minimum width=1em]
                    \tikzstyle{vertex2}=[rectangle, minimum width=1em, minimum height=1em, fill]
        		
        		    \node[draw, vertex] (0) at (-1,0) {$4$};
        		    \node[draw, vertex] (1) at (0,0) {$4$};
        		    \node[draw, vertex] (2) at (1,0) {$6$};
        		    
        		    \draw (0.north) node[above]{$8$};
        		    \draw (1.north) node[above]{$19$};
        		    \draw (2.north) node[above]{$16$};
        		    
        		    \draw (0) -- (1);
        		    \draw (1) -- (2);
        		    
                \end{tikzpicture} 
            }
        \caption*{Case 12} 
    \end{subfigure}
    
    \centering 
    
    \begin{subfigure}[b]{0.24\textwidth} 
        \centering 
            \scalebox{1}
	        {
                \begin{tikzpicture}[thick]
                     \tikzstyle{vertex}=[circle,minimum width=1em]
                    \tikzstyle{vertex2}=[rectangle, minimum width=1em, minimum height=1em, fill]
        		
        		    \node[draw, vertex] (0) at (-1,0) {$4$};
        		    \node[draw, vertex] (1) at (0,0) {$4$};
        		    \node[draw, vertex] (2) at (1,0) {$6$};
        		    
        		    \draw (0.north) node[above]{$4$};
        		    \draw (1.north) node[above]{$9$};
        		    \draw (2.north) node[above]{$16$};
        		    
        		    \draw[dashed] (0) -- (1);
        		    \draw (1) -- (2);
        		    
                \end{tikzpicture} 
            }
        \caption*{Case 13} 
    \end{subfigure}
    \begin{subfigure}[b]{0.24\textwidth} 
        \centering 
            \scalebox{1}
	        {
                \begin{tikzpicture}[thick]
                     \tikzstyle{vertex}=[circle,minimum width=1em]
                    \tikzstyle{vertex2}=[rectangle, minimum width=1em, minimum height=1em, fill]
        		
        		    \node[draw, vertex] (0) at (-1,0) {$4$};
        		    \node[draw, vertex] (1) at (0,0) {$4$};
        		    \node[draw, vertex] (2) at (1,0) {$6$};
        		    
        		    \draw (0.north) node[above]{$8$};
        		    \draw (1.north) node[above]{$9$};
        		    \draw (2.north) node[above]{$6$};
        		    
        		    \draw (0) -- (1);
        		    \draw[dashed] (1) -- (2);
        		    
                \end{tikzpicture} 
            }
        \caption*{Case 14} 
    \end{subfigure}
    \begin{subfigure}[b]{0.24\textwidth} 
        \centering 
            \scalebox{1}
	        {
                \begin{tikzpicture}[thick]
                     \tikzstyle{vertex}=[circle,minimum width=1em]
                    \tikzstyle{vertex2}=[rectangle, minimum width=1em, minimum height=1em, fill]
        		
        		    \node[draw, vertex] (0) at (-1,0) {$6$};
        		    \node[draw, vertex] (1) at (0,0) {$4$};
        		    \node[draw, vertex] (2) at (1,0) {$6$};
        		    
        		    \draw (0.north) node[above]{$17$};
        		    \draw (1.north) node[above]{$19$};
        		    \draw (2.north) node[above]{$17$};
        		    
        		    \draw (0) -- (1);
        		    \draw (1) -- (2);
        		    \draw (0) edge[bend right, dashed] (2);
        		    
                \end{tikzpicture} 
            }
        \caption*{Case 15} 
    \end{subfigure}
    \begin{subfigure}[b]{0.24\textwidth} 
        \centering 
            \scalebox{1}
	        {
                \begin{tikzpicture}[thick]
                     \tikzstyle{vertex}=[circle,minimum width=1em]
                    \tikzstyle{vertex2}=[rectangle, minimum width=1em, minimum height=1em, fill]
        		
        		    \node[draw, vertex] (0) at (-1,0) {$6$};
        		    \node[draw, vertex] (1) at (0,0) {$4$};
        		    \node[draw, vertex] (2) at (1,0) {$6$};
        		    
        		    \draw (0.north) node[above]{$7$};
        		    \draw (1.north) node[above]{$9$};
        		    \draw (2.north) node[above]{$19$};
        		    
        		    \draw[dashed] (0) -- (1);
        		    \draw (1) -- (2);
        		    \draw (0) edge[bend right, dashed] (2);
        		    
                \end{tikzpicture} 
            }
        \caption*{Case 16} 
    \end{subfigure}
    
    \centering
    
    \begin{subfigure}[b]{0.24\textwidth} 
        \centering 
            \scalebox{1}
	        {
                \begin{tikzpicture}[thick]
                     \tikzstyle{vertex}=[circle,minimum width=1em]
                    \tikzstyle{vertex2}=[rectangle, minimum width=1em, minimum height=1em, fill]
        		
        		    \node[draw, vertex] (0) at (-1,0) {$6$};
        		    \node[draw, vertex] (1) at (0,0) {$4$};
        		    \node[draw, vertex] (2) at (1,0) {$6$};
        		    
        		    \draw (0.north) node[above]{$16$};
        		    \draw (1.north) node[above]{$19$};
        		    \draw (2.north) node[above]{$16$};
        		    
        		    \draw (0) -- (1);
        		    \draw (1) -- (2);
        		    
                \end{tikzpicture} 
            }
        \caption*{Case 17} 
    \end{subfigure}
    \begin{subfigure}[b]{0.24\textwidth} 
        \centering 
            \scalebox{1}
	        {
                \begin{tikzpicture}[thick]
                     \tikzstyle{vertex}=[circle,minimum width=1em]
                    \tikzstyle{vertex2}=[rectangle, minimum width=1em, minimum height=1em, fill]
        		
        		    \node[draw, vertex] (0) at (-1,0) {$6$};
        		    \node[draw, vertex] (1) at (0,0) {$4$};
        		    \node[draw, vertex] (2) at (1,0) {$6$};
        		    
        		    \draw (0.north) node[above]{$6$};
        		    \draw (1.north) node[above]{$9$};
        		    \draw (2.north) node[above]{$16$};
        		    
        		    \draw[dashed] (0) -- (1);
        		    \draw (1) -- (2);
        		    
                \end{tikzpicture} 
            }
        \caption*{Case 18} 
    \end{subfigure}
    \begin{subfigure}[b]{0.24\textwidth} 
        \centering 
            \scalebox{1}
	        {
                \begin{tikzpicture}[thick]
                     \tikzstyle{vertex}=[circle,minimum width=1em]
                    \tikzstyle{vertex2}=[rectangle, minimum width=1em, minimum height=1em, fill]
        		
        		    \node[draw, vertex] (0) at (-1,0) {$6$};
        		    \node[draw, vertex] (1) at (0,0) {$6$};
        		    \node[draw, vertex] (2) at (1,0) {$4$};
        		    
        		    \draw (0.north) node[above]{$16$};
        		    \draw (1.north) node[above]{$17$};
        		    \draw (2.north) node[above]{$4$};
        		    
        		    \draw[dashed] (0) -- (1);
        		    \draw (1) -- (2);
        		    
                \end{tikzpicture} 
            }
        \caption*{Case 19} 
    \end{subfigure}
    \begin{subfigure}[b]{0.24\textwidth} 
        \centering 
            \scalebox{1}
	        {
                \begin{tikzpicture}[thick]
                     \tikzstyle{vertex}=[circle,minimum width=1em]
                    \tikzstyle{vertex2}=[rectangle, minimum width=1em, minimum height=1em, fill]
        		
        		    \node[draw, vertex] (0) at (-1,0) {$6$};
        		    \node[draw, vertex] (1) at (0,0) {$6$};
        		    \node[draw, vertex] (2) at (1,0) {$4$};
        		    
        		    \draw (0.north) node[above]{$6$};
        		    \draw (1.north) node[above]{$17$};
        		    \draw (2.north) node[above]{$8$};
        		    
        		    \draw (0) -- (1);
        		    \draw[dashed] (1) -- (2);
        		    
                \end{tikzpicture} 
            }
        \caption*{Case 20} 
    \end{subfigure}
\end{figure}
\begin{itemize}
\item Case 1: We can use Proposition~\ref{prop:19-5-4} to show that the three vertices forbid at most 17 colors from $u$.
\item Case 2: We can use Proposition~\ref{prop:19-5-4} since having one less edge gives us less constraints.
\item Case 3: Notice that it is more restrictive for two vertices $u$ and 
$v$ to be adjacent rather than to be adjacent to the same 2-vertex. This comes from the fact that the 2-vertex can be colored using Property $P_{2, 4}$ as long as $u$ and $v$ get different identities (which is already a 
requirement when $u$ and $v$ are adjacent). We can therefore use Proposition~\ref{prop:19-5-4}. If there are more that one 2-vertex that $u$ and $v$ are both adjacent to we can still use this technique (and $u$ and $v$ have more available colors). In the following cases we can therefore assume that there is at most one $2$-vertex adjacent to a given pair of bad vertices.
\item Case 4: Note that the colors in $S_2$ all have different identities 
since they are all adjacent to the same vertex in $\rho(SP_9^+)$. By removing at most 4 of the colors from $S_2$, we can guarantee that any color chosen for $v_2$ will not have the same identity as one of the colors available for $v_3$. By Proposition~\ref{prop:7-5}, $v_1$ and $v_2$ forbid at most 13 colors and by Proposition~\ref{prop:P24*} $v_3$ forbids at most 
2 colors.
\item Case 5: We use Proposition~\ref{prop:19-5-4}.
\item Case 6: By removing at most $4$ colors from $S_1$ and $2$ colors from $S_2$ and $S_3$ we can guarantee that no colors in these three sets have the same identity. By Proposition~\ref{prop:cf}, the three vertices forbid at most $6 + 6 + 4 = 16$.
\item Case 7: We remove $3$ colors from $S_3$. By removing at most $9$ colors from $S_2$ we can guarantee that no color in $S_2$ has the same identity as a color in $S_3$. By Proposition~\ref{prop:7-5}, $v_1$ and $v_2$ forbid at most 13 colors and by Proposition~\ref{prop:cf} $v_3$ forbids at most 4 colors. 
\item Case 8: We use Proposition~\ref{prop:19-5-4}.
\item Case 9: Suppose that the $7$ identities in $S_2$ are all in $S_1$ and $S_3$. We remove $2$ colors from $v_1$ and $v_3$ and $5$ colors from $v_2$ such that $S_1 = S_3$ and no color in $S_1$ has the same identity as a color in $S_2$. Vertices $v_1$ and $v_2$ forbid at most 11 colors by 
Proposition~\ref{prop:7-7} and by Proposition~\ref{prop:cf} $v_3$ forbids 
at most 6 colors. 
We can now assume that there is at least one identity in $S_2$ that is not in $S_1$ (or alternatively $S_3$). Therefore, by removing at most $2$ vertices from $S_1$, $4$ from $S_2$ and $3$ from $S_3$ we can guarantee that there are no colors in $S_2$ that have the same identity as a color in 
$S_1$ or $S_3$. By Proposition~\ref{prop:7-5}, $v_1$ and $v_3$ forbid at most 13 colors and by Proposition~\ref{prop:cf} $v_2$ forbids at most 4 colors. 
\item Case 10: We proceed similarly to Case $7$.
\item Case 11: We remove $1$ color from $S_1$ and $S_3$ and $6$ from $S_2$ such that there are no colors in $S_2$ that have the same identity as a 
color in $S_1$ or $S_3$. By Property $P_{1, 9}$ of $\rho(SP_9^+)$, $v_2$ forbids at most $11$ colors and by Property~\ref{prop:P24*} $v_1$ and $v_3$ each forbid at most $3$ colors.
\item Case 12: We use Proposition~\ref{prop:19-5-4}.
\item Case 13: We proceed similarly to Case $4$.
\item Case 14: We proceed similarly to Case $4$.
\item Case 15: We use Proposition~\ref{prop:19-5-4}. 
\item Case 16: We use Proposition~\ref{prop:19-5-4}.
\item Case 17: We use Proposition~\ref{prop:19-5-4}.
\item Case 18: We proceed similarly to Case $5$.
\item Case 19: We proceed similarly to Case $5$.
\item Case 20: We proceed similarly to Case $5$.
\end{itemize}
\end{itemize}
\end{proof}
\begin{lemma}
The graph $G$ does not contain Configurations $C_{\ref{SPnpC7}}$ or $C_{\ref{SPnpC8}}$.
\end{lemma}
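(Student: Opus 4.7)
The plan is to follow the same template as for Configuration $C_{\ref{SPnpC6}}$: let $u$ denote the central vertex of the configuration, set $G' = G - u$, invoke minimality to obtain a $\rho(SP_9^+)$-coloring $\varphi$ of $G'$, and extend $\varphi$ to $u$ by checking that the combined constraints on $u$ leave at least one admissible color in $\rho(SP_9^+)$.

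For $C_{\ref{SPnpC7}}$, the single good neighbor $v$ is already colored by $\varphi$, so Property $P_{1, 9}$ of $\rho(SP_9^+)$ exhibits $9$ colors with the edge-sign required for $u$ relative to $v$. From these $9$, the $t$ two-neighbors and $b$ bad neighbors of $u$ forbid at most $t + 4b$ further colors, using the per-two-neighbor bound of $1$ and per-bad-neighbor bound of $4$ established in the $C_{\ref{SPnpC6}}$ case analysis (valid here because $t + 4b < 9$ forces $b \le 2$, so we never enter the more delicate $b = 3$ regime). As $t + 4b < 9$, at least one of the nine colors survives, and we use it for $u$; this step should be essentially routine.

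For $C_{\ref{SPnpC8}}$, the two good neighbors $v_1$ and $v_2$ drive the argument: if $\varphi(v_1)$ and $\varphi(v_2)$ lie in distinct identity classes of $\rho(SP_9^+)$ they form a clique there, and Property $P_{2, 4}$ yields $4$ colors compatible with both, from which the $t < 4$ two-neighbors remove at most $t$ and leave at least one color for $u$. The subtle case, which I expect to be the main obstacle, is when $\varphi(v_1)$ and $\varphi(v_2)$ share an identity: if the edge signs align, the two constraints collapse into a single $P_{1, 9}$ constraint with at least $9 - t \ge 6$ free colors, but otherwise a recoloring of one of the $v_i$ is required to shift it into a new identity class. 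Since $v_1$ and $v_2$ are good, and hence not saturated with $2$-neighbors in the sense that defines bad vertices, a case analysis in the spirit of the one used for $C_{\ref{SPnpC6}}$---combining $P_{1,9}$, $P_{2,4}$, $P_{3,1}$, and the one-color-per-$2$-neighbor subtraction applied to the non-$u$ neighborhood of each $v_i$---should supply enough alternative colors for the chosen $v_i$ to be repositioned without cascading conflicts, completing the extension of $\varphi$ to $u$.
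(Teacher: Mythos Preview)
Your plan for $C_{\ref{SPnpC7}}$ coincides with the paper's one-line argument: the single good neighbour is held fixed, Property $P_{1,9}$ supplies $9$ candidates for $u$, and the $C_{\ref{SPnpC6}}$ bookkeeping (each $2$-neighbour forbids one colour, each bad neighbour at most four, with $b\le 2$ automatic from $t+4b<9$) removes fewer than $9$ of them.

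For $C_{\ref{SPnpC8}}$ the paper's proof is equally terse---start with the $4$ colours from $P_{2,4}$, subtract $t<4$---and you are right to notice that $P_{2,4}$ requires $\varphi(v_1)$ and $\varphi(v_2)$ to be adjacent in $\rho(SP_9^+)$, i.e.\ to lie in distinct identity classes. Your compatible-signs same-identity sub-case via $P_{1,9}$ is fine. The gap is the conflicting-signs same-identity sub-case, where you propose to recolour one of the $v_i$. Being \emph{good} gives you almost nothing here: it only rules out two specific $(\text{degree},\#\text{$2$-neighbours})$ pairs, and a good vertex may for instance have degree $4$ with all four neighbours of degree $4^+$, hence three fixed-colour neighbours in $G-u$. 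None of $P_{1,9}$, $P_{2,4}$, $P_{3,1}$ then promises more than a single colour for $v_i$, which may well be the one it already has, and the per-$2$-neighbour subtraction is vacuous when there are no $2$-neighbours. So there is no mechanism to shift $v_i$ into a new identity class without a cascading recolouring you have no control over. The paper's own proof is silent on this sub-case, so you have not fallen behind it; but the recolouring step as written is not justified, and your intuition that goodness means ``not saturated with $2$-neighbours'' is incorrect.
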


\begin{proof}
We proceed in the same way as Configuration $C_{\ref{SPnpC6}}$ except there are at the start only $9$ (resp. $4$) colors available for $u$ by Property $P_{1, 9}$ (resp. $P_{2, 4}$) of $\rho(SP_9^+)$.
\end{proof}

\subsection{Discharging}

We start by the definition of the initial weighting $\omega$ defined
by $\omega(v)=d(v)-\frac{17}{5}$ for each vertex $v$ of degree $d(v)$. By
construction, the sum of all the weights is negative.

We then introduce two discharging rules:
\begin{itemize}
\item[($\ruleSPnp{R2}$)] Every $4^+$-vertex gives $\frac{7}{10}$ to each of its 2-neighbors.
\item[($\ruleSPnp{Rb}$)] Every $4^+$ good vertex gives $\frac{1}{10}$ to each of its bad neighbors.
\end{itemize}

\label{sec:SP9+positive}
This section is devoted to obtaining a contradiction by proving that
every vertex of $G$ has non-negative final weights after the
discharging procedure. We distinguish several cases for the vertices,
depending on their degree. First note that since $G$ cannot contain $C_{\ref{SPnpC0}}$ and $C_{\ref{SPnpC1}}$, the minimum degree of $G$ is $2$ and $G$ does not contain 3-vertices by $C_{\ref{SPnpC5}}$.

\subsubsection{$2$-vertices}

Let $v$ be a $2$-vertex. Since $C_{\ref{SPnpC2}}$ and $C_{\ref{SPnpC5}}$ are forbidden, $v$ only has
$4^+$-neighbors. Thus, by $R_{\ref{R2}}$, each of them gives
$\frac{7}{10}$ to $v$. Therefore, the final weight of $v$ is
$\omega'(v)=2-\frac{17}{5}+2\cdot \frac{7}{10}=0$.

\subsubsection{vertices of degree $d$, $4 \leq d \leq 11$}
We checked on a computer with the following algorithm that for every vertex $v$ with $b$ bad neighbours, $t$ 2-neighbors and $n$ other neighbors then either $v$ is in a forbidden configurations or $v$ has final weight at least 0 after discharging. \newline

\begin{algorithm}[H]
 \For{$degree \in \{4, 5, ..., 11\}$}{
     \For{$t, b, n \in \mathbb{N}$ such that $t + b + n = degree$}{
            \uIf{$n = 0$ and $e + 4 \cdot b < 20$ and $b \leq 2$}{
                continue (forbidden configuration $C_{\ref{SPnpC6}}$)
            }
            \uElseIf{$n = 0$ and $t + 17 < 20$ and $b = 3$}{
                continue (forbidden configuration $C_{\ref{SPnpC6}}$)
            }
            \uElseIf{$n = 1$ and $t + 4 \cdot b < 9$ and $b \leq 2$}{
                continue (forbidden configuration $C_{\ref{SPnpC7}}$)
            }
            \uElseIf{$n = 2$ and $t < 4$ and $b = 0$}{
                continue (forbidden configuration $C_{\ref{SPnpC8}}$)
            }
            \uElseIf{$v$ is bad}{
                \uIf{$degree - \frac{17}{5} + (n) * \frac{1}{10} - e * \frac{7}{10}$}{
                continue (final weight at least 0)
                }
                \Else{
                error (final weight smaller than 0)
                }
            }
            \Else{
                \uIf{$degree - \frac{17}{5} - t * \frac{7}{10} - b * \frac{1}{10} \geq 0$}{
                continue (final weight at least 0)
                }
                \Else{
                error (final weight smaller than 0)
                }
            }
     }
 }
 \caption*{Algorithm used to check that each vertex of degree between 4 and 11 has final weight at least 0 after discharging.}
\end{algorithm}

\subsubsection{$12^+$-vertices}
Let $v$ be a vertex of degree $d$ at least 12. In the worse case, $v$ has 
$d$ 2-neighbors. Therefore, it has weight at least $d - \frac{17}{5} - d \cdot \frac{7}{10}$ which is greater than or equal to 0 for $d \geq 12$.

Every vertex has non-negative weight after discharging so $G$ cannot have 
maximum average degree smaller than $\frac{17}{5}$. This gives us a contradiction and concludes the proof.

\section{Proof of Theorem \ref{thm:SPq+}}\label{sec:thSPq+}
In this section, for $q\ge 9$, we prove that any signed graph of maximum average degree less than $4 - \frac{8}{q+3}$ admit a $\rho(SP_q^+)$-sp-coloring. To do so, we suppose that this theorem is false and we consider in the remainder of this section a minimal counter-example G w.r.t its order: it is a smallest signed graph with $\mad(G) <4- \frac{8}{q+3}$ admitting no $\rho(SP_q^+)$-sp-coloring.

\subsection{Forbidden configurations}
\label{sec:SPq+reduction}\label{sec:SPq+configs}

We define several configurations $C_1,\ldots,C_{9}$ as follows (see Figure~\ref{fig:SPq+configs}).

\begin{itemize}
\item[$\bullet$] $\configSPqp{C0}$ is a $0$-vertex.
\item[$\bullet$] $\configSPqp{C1}$ is a $1$-vertex.
\item[$\bullet$] $\configSPqp{C2}$ is two adjacent 2-vertices.
\item[$\bullet$] $\configSPqp{C3}$ is a 2-vertex with a 3-neighbor.
\item[$\bullet$] $\configSPqp{C4}$ is a 3-vertex.
\item[$\bullet$] $\configSPqp{C5}$ is a $d$-vertex adjacent to at most $d$ 2-neighbors with $d < 2q + 2$.
\item[$\bullet$] $\configSPqp{C6}$ is a $d$-vertex adjacent to at most $d-1$ 2-neighbors with $d < q+1$.
\item[$\bullet$] $\configSPqp{C7}$ is a $d$-vertex adjacent to at most $d-2$ 2-neighbors with $d < \frac{q+3}{2}$.
\item[$\bullet$] $\configSPqp{C8}$ is a $d$-vertex adjacent to at most $d-3$ 2-neighbors with $d < \frac{q+7}{4}$.
\end{itemize}

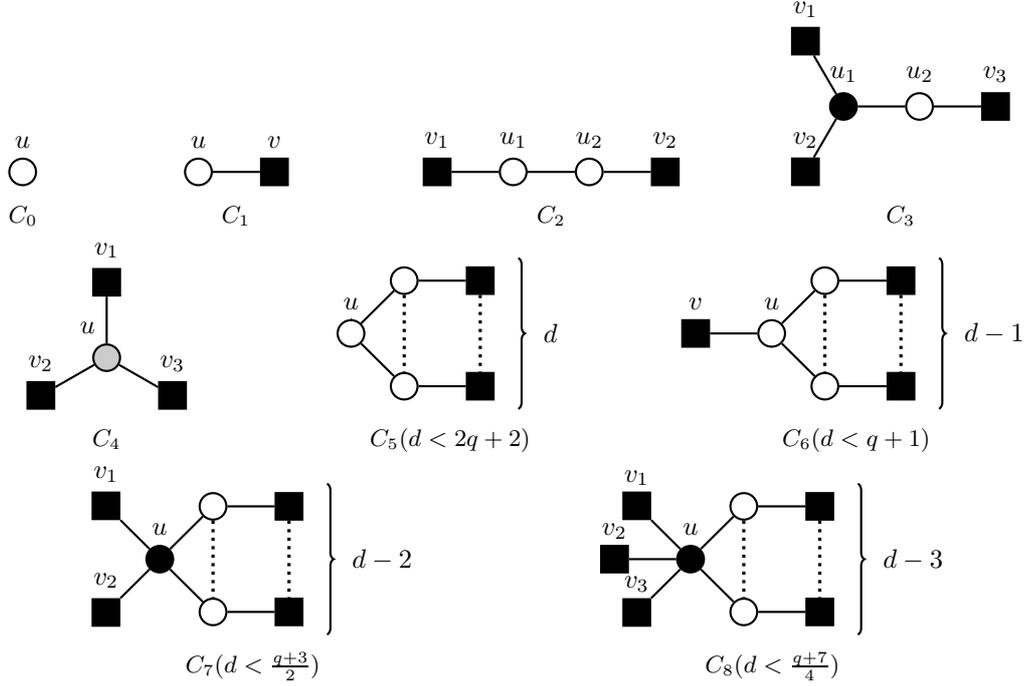
\begin{figure}[H]
    \centering
    \begin{subfigure}[b]{0.12\textwidth}
        \centering 
            \scalebox{1}
	        {
                \begin{tikzpicture}[thick]
                    \tikzstyle{vertex}=[circle,minimum width=1em]
                    \tikzstyle{vertex2}=[rectangle, minimum width=1em, minimum height=1em]
        		
        		    \node[draw, vertex] (1) at (0,0) {};
        		    \draw (1.north) node[above]{$u$};
        		    
                \end{tikzpicture} 
            }
        \caption*{$C_0$} 
    \end{subfigure} 
    \begin{subfigure}[b]{0.24\textwidth}
        \centering 
            \scalebox{1}
	        {
                \begin{tikzpicture}[thick]
                    \tikzstyle{vertex}=[circle,minimum width=1em]
                    \tikzstyle{vertex2}=[rectangle, minimum width=1em, minimum height=1em]
        		
        		    \node[draw, vertex] (1) at (0,0) {};
        		    \node[draw, vertex2, fill] (2) at (1,0) {};
        		    
        		    \draw (1.north) node[above]{$u$};
        		    \draw (2.north) node[above]{$v$};
        		    
        		    \draw (1) -- (2);
                \end{tikzpicture} 
            }
        \caption*{$C_1$} 
    \end{subfigure} 
    \begin{subfigure}[b]{0.30\textwidth} 
        \centering 
            \scalebox{1}
	        {
                \begin{tikzpicture}[thick]
                    \tikzstyle{vertex}=[circle,minimum width=1em]
                    \tikzstyle{vertex2}=[rectangle, minimum width=1em, minimum height=1em, fill]
        		
        		    \node[draw, vertex] (1) at (0,0) {};
        		    \node[draw, vertex] (2) at (1,0) {};
        		    \node[draw, vertex2] (0) at (-1,0) {};
        		    \node[draw, vertex2] (3) at (2,0) {};
        		    
        		    \draw (0.north) node[above]{$v_1$};
        		    \draw (1.north) node[above]{$u_1$};
        		    \draw (2.north) node[above]{$u_2$};
        		    \draw (3.north) node[above]{$v_2$};
        		    
        		    \draw (0) -- (1);
        		    \draw (1) -- (2);
        		    \draw (2) -- (3);
                \end{tikzpicture} 
            }
        \caption*{$C_2$} 
    \end{subfigure} 
    \begin{subfigure}[b]{0.30\textwidth} 
        \centering 
            \scalebox{1}
	        {
                \begin{tikzpicture}[thick]
                    \tikzstyle{vertex}=[circle,minimum width=1em]
                    \tikzstyle{vertex2}=[rectangle, minimum width=1em, minimum height=1em, fill]
        		
        		    \node[draw, vertex, fill] (0) at (0:0) {};
        		    \node[draw, vertex] (1) at (0:1) {};
        		    \node[draw, vertex2] (2) at (0:2) {};
        		    \node[draw, vertex2] (3) at (120:1) {};
        		    \node[draw, vertex2] (4) at (240:1) {};
        		    
        		    \draw (0.north) node[above]{$u_1$};
        		    \draw (1.north) node[above]{$u_2$};
        		    \draw (2.north) node[above]{$v_3$};
        		    \draw (3.north) node[above]{$v_1$};
        		    \draw (4.north) node[above]{$v_2$};
        		    
        		    \draw (0) -- (1);
        		    \draw (1) -- (2);
        		    \draw (0) -- (3);
        		    \draw (0) -- (4);
                \end{tikzpicture}  
            }
        \caption*{$C_3$} 
    \end{subfigure} 
    
    \centering
        
    \begin{subfigure}[b]{0.24\textwidth} 
        \centering 
            \scalebox{1}
	        {
                \begin{tikzpicture}[thick]
                    \tikzstyle{vertex}=[circle,minimum width=1em]
                    \tikzstyle{vertex2}=[rectangle, minimum width=1em, minimum height=1em, fill]

        		    \node[draw, vertex, fill=gray!40] (0) at (0:0) {};
        		    \node[draw, vertex2] (1) at (0-30:1) {};
        		    \node[draw, vertex2] (3) at (120-30:1) {};
        		    \node[draw, vertex2] (4) at (240-30:1) {};
        		    
        		    \draw (0.north) node[above, shift={(-0.25,0)}]{$u$};
        		    \draw (1.north) node[above]{$v_3$};
        		    \draw (3.north) node[above]{$v_1$};
        		    \draw (4.north) node[above]{$v_2$};
        		    
        		    \draw (0) -- (1);
        		    \draw (0) -- (3);
        		    \draw (0) -- (4);
                \end{tikzpicture} 
            }
        \caption*{$C_4$} 
    \end{subfigure} 
    \begin{subfigure}[b]{0.35\textwidth} 
        \centering 
            \scalebox{1}
	        {
                \begin{tikzpicture}[thick]
                    \tikzstyle{vertex}=[circle,minimum width=1em]
                    \tikzstyle{vertex2}=[rectangle, minimum width=1em, minimum height=1em, fill]
                    \tikzstyle{vertex3}=[regular polygon, regular polygon sides=3, minimum size=1em]
        		
        		    \node[draw, vertex] (1) at (0,0) {};
        		    \node[draw, vertex] (2) at (0.7, 0.7) {};
        		    \node[draw, vertex2] (2') at (1.7, 0.7) {};
        		    \node[draw, vertex] (3) at (0.7, -0.7) {};
        		    \node[draw, vertex2] (3') at (1.7, -0.7) {};
        		    
        		    \draw (1.north) node[above]{$u$};
        		    
        		    \draw (0) -- (1);
        		    \draw (1) -- (2);
        		    \draw (1) -- (3);
        		    \draw (2) -- (2');
        		    \draw (3) -- (3');
        		    \draw[dotted, very thick] (2) -- (3);
        		    \draw[dotted, very thick] (2') -- (3');
        		    
        		    \draw[decorate,decoration={brace}](2.2,1) -- (2.2,-1) node[right=0.2cm,pos=0.5] {$d$};
                \end{tikzpicture} 
            }
        \caption*{$C_5 (d < 2q+2)$} 
    \end{subfigure} 
    \begin{subfigure}[b]{0.35\textwidth} 
        \centering 
            \scalebox{1}
	        {
                \begin{tikzpicture}[thick]
                    \tikzstyle{vertex}=[circle,minimum width=1em]
                    \tikzstyle{vertex2}=[rectangle, minimum width=1em, minimum height=1em, fill]
                    \tikzstyle{vertex3}=[regular polygon, regular polygon sides=3, minimum size=1em]
        		
        		    \node[draw, vertex] (1) at (0,0) {};
        		    \node[draw, vertex] (2) at (0.7, 0.7) {};
        		    \node[draw, vertex2] (2') at (1.7, 0.7) {};
        		    \node[draw, vertex] (3) at (0.7, -0.7) {};
        		    \node[draw, vertex2] (3') at (1.7, -0.7) {};
        		    \node[draw, vertex2] (0) at (-1,0) {};
        		    
        		    \draw (1.north) node[above]{$u$};
        		    \draw (0.north) node[above]{$v$};

        		    \draw (0) -- (1);
        		    \draw (1) -- (2);
        		    \draw (1) -- (3);
        		    \draw (2) -- (2');
        		    \draw (3) -- (3');
        		    \draw[dotted, very thick] (2) -- (3);
        		    \draw[dotted, very thick] (2') -- (3');
        		    
        		    \draw[decorate,decoration={brace}](2.2,1) -- (2.2,-1) node[right=0.2cm,pos=0.5] {$d-1$};
                \end{tikzpicture} 
            }
        \caption*{$C_6 (d < q+1)$} 
    \end{subfigure} 
    
    \centering
    
    \begin{subfigure}[b]{0.45\textwidth} 
        \centering 
            \scalebox{1}
	        {
                \begin{tikzpicture}[thick]
                    \tikzstyle{vertex}=[circle,minimum width=1em]
                    \tikzstyle{vertex2}=[rectangle, minimum width=1em, minimum height=1em, fill]
                    \tikzstyle{vertex3}=[regular polygon, regular polygon sides=3, minimum size=1em]
        		
        		    \node[draw, vertex, fill] (1) at (0,0) {};
        		    \node[draw, vertex] (2) at (0.7, 0.7) {};
        		    \node[draw, vertex2] (2') at (1.7, 0.7) {};
        		    \node[draw, vertex] (3) at (0.7, -0.7) {};
        		    \node[draw, vertex2] (3') at (1.7, -0.7) {};
        		    \node[draw, vertex2] (0) at (135:1) {};
        		    \node[draw, vertex2] (0') at (-135:1) {};
        		    
        		    \draw (1.north) node[above]{$u$};
        		    \draw (0.north) node[above]{$v_1$};
        		    \draw (0'.north) node[above]{$v_2$};

        		    \draw (0) -- (1);
        		    \draw (0') -- (1);
        		    \draw (1) -- (2);
        		    \draw (1) -- (3);
        		    \draw (2) -- (2');
        		    \draw (3) -- (3');
        		    \draw[dotted, very thick] (2) -- (3);
        		    \draw[dotted, very thick] (2') -- (3');
        		    
        		    \draw[decorate,decoration={brace}](2.2,1) -- (2.2,-1) node[right=0.2cm,pos=0.5] {$d-2$};
                \end{tikzpicture} 
            }
        \caption*{$C_7 (d < \frac{q+3}{2})$} 
    \end{subfigure} 
    \begin{subfigure}[b]{0.45\textwidth} 
        \centering 
            \scalebox{1}
	        {
                \begin{tikzpicture}[thick]
                    \tikzstyle{vertex}=[circle,minimum width=1em]
                    \tikzstyle{vertex2}=[rectangle, minimum width=1em, minimum height=1em, fill]
                    \tikzstyle{vertex3}=[regular polygon, regular polygon sides=3, minimum size=1em]
        		
        		    \node[draw, vertex, fill] (1) at (0,0) {};
        		    \node[draw, vertex] (2) at (0.7, 0.7) {};
        		    \node[draw, vertex2] (2') at (1.7, 0.7) {};
        		    \node[draw, vertex] (3) at (0.7, -0.7) {};
        		    \node[draw, vertex2] (3') at (1.7, -0.7) {};
        		    \node[draw, vertex2] (0) at (-1,0) {};
        		    \node[draw, vertex2] (0') at (135:1) {};
        		    \node[draw, vertex2] (0'') at (-135:1) {};
        		    
        		    \draw (1.north) node[above]{$u$};
        		    \draw (0'.north) node[above, shift={(-0,-0.05)}]{$v_1$};
        		    \draw (0.north) node[above, shift={(-0,-0.05)}]{$v_2$};
        		    \draw (0''.north) node[above, shift={(-0,-0.05)}]{$v_3$};
        		    
        		    \draw (0) -- (1);
        		    \draw (0') -- (1);
        		    \draw (0'') -- (1);
        		    \draw (1) -- (2);
        		    \draw (1) -- (3);
        		    \draw (2) -- (2');
        		    \draw (3) -- (3');
        		    \draw[dotted, very thick] (2) -- (3);
        		    \draw[dotted, very thick] (2') -- (3');
        		    
        		    \draw[decorate,decoration={brace}](2.2,1) -- (2.2,-1) node[right=0.2cm,pos=0.5] {$d-3$};
                \end{tikzpicture}  
            }
        \caption*{$C_8 (d < \frac{q+7}{4})$} 
    \end{subfigure}
    \caption{Forbidden configurations. Square vertices can be of any degree. White vertices will be removed. Triangle vertices are 2, 5-worse or 6-bad-vertices.} 
    \label{fig:SPq+configs}
  \end{figure}
  
We prove that configurations $C_0$ to
$C_8$ are forbidden. To this end, we first prove some
generic results we use to prove that the configurations are forbidden. Remember that $\rho(SP_q^+)$ is vertex-transitive, antiautomorphic and has Properties $P_{1,q}$, $P_{2, \frac{q-1}{2}}$ and $P_{3, \frac{q-5}{4}}$ by Lemma~\ref{lem:PTRSP}.

\begin{lemma}
The graph $G$ does not contain Configuration $C_0$ to $C_4$.
\end{lemma}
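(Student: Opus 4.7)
The plan is to establish the five configurations in turn, following the templates of the analogous configurations $C_{\ref{SPnpC0}}$--$C_{\ref{SPnpC5}}$ from the proof of Theorem~\ref{thm:SP9+}. As in that proof, I take $G$ to be minimal with respect to the lexicographic order on (number of $3^+$-vertices, number of $2$-vertices); this refined notion of minimality is needed for $C_4$. I will also reuse the notion of \emph{identity}: two colors of $\rho(SP_q^+)$ share an identity iff they are equal or antitwin, giving $q+1$ identities in total. Lemma~\ref{lem:PTRSP} supplies properties $P_{1,q}$, $P_{2,(q-1)/2}$, and $P_{3,(q-5)/4}$, and the hypothesis $q>9$ makes these parameters at least $11$, $5$, and $1$ respectively, which is what makes all the numerical bookkeeping below go through.

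The first four configurations are direct adaptations of their $\rho(SP_9^+)$ counterparts. For $C_0$, any color of $\rho(SP_q^+)$ extends the sp-coloring of $G-u$ by vertex-transitivity. For $C_1$, property $P_{1,q}$ provides $q$ candidate colors for the $1$-vertex with respect to its unique neighbour. For $C_2$, after removing both $2$-vertices $u_1,u_2$, I pick a color for $u_1$ via $P_{1,q}$ whose identity differs from $\varphi(v_2)$ (excluding at most one of the $q$ candidates), and then color $u_2$ by $P_{2,(q-1)/2}$ applied to $\varphi(u_1)\varphi(v_2)$. For $C_3$, after removing only $u_2$ and uncoloring $u_1$, property $P_{2,(q-1)/2}$ offers $(q-1)/2$ admissible recolorings of $u_1$ with respect to $(v_1,v_2)$; no two of them can be antitwin (they all lie in the same sign-neighbourhood of $\varphi(v_1)$), so at least $(q-1)/2-1\ge 4$ have identity distinct from $\varphi(v_3)$, after which $P_{2,(q-1)/2}$ colors $u_2$.

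The delicate case is $C_4$, which I would treat exactly as in Lemma~\ref{lem:TRSP9_C4}. Given a $3$-vertex $u$ with neighbours $v_1,v_2,v_3$, all of them $4^+$-vertices because of $C_2$ and $C_3$, I would form $G'$ by removing $u$ and inserting three new $2$-vertices $u_1,u_2,u_3$, each joining a distinct pair among $\{v_1,v_2,v_3\}$, with signs chosen so that the two edges incident to $v_i$ in $G'$ both carry the sign $s_G(v_iu)$. Then $G'$ has one fewer $3^+$-vertex than $G$ and so, by minimality, admits an sp-coloring $\varphi$. I would extend it to $G$ by distinguishing two cases: if $\varphi(v_1),\varphi(v_2),\varphi(v_3)$ have three distinct identities, they form a triangle in $\rho(SP_q^+)$ and $P_{3,(q-5)/4}$ gives a color for $u$; if two of the $v_i$ share an identity, the sign-choice in the construction ensures they impose identical constraints on $u$, and $P_{2,(q-1)/2}$ suffices.

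The main obstacle is the verification that $\mad(G')<\alpha$, where $\alpha=4-\frac{8}{q+3}$, needed to apply minimality to $G'$. For a subgraph $H'$ of $G'$ containing $k\in\{1,2,3\}$ of the inserted $2$-vertices, the corresponding subgraph $H$ of $G$ obtained by contracting them back to $u$ satisfies $|V(H')|-|V(H)|=|E(H')|-|E(H)|=k-1$; the same algebraic manipulation as in Lemma~\ref{lem:TRSP9_C4} then deduces $2|E(H)|\ge\alpha|V(H)|$ from $2|E(H')|\ge\alpha|V(H')|$, as long as $\alpha\ge 2$, which holds since $q\ge 9$ gives $\alpha\ge 10/3$. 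This contradicts $\mad(G)<\alpha$ and completes the argument.
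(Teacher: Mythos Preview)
Your approach is exactly the paper's: it simply observes that $\rho(SP_q^+)$ enjoys at least the properties $P_{1,9}$, $P_{2,4}$, $P_{3,1}$ of $\rho(SP_9^+)$ and invokes Lemmas~\ref{TRSP9_C0}--\ref{TRSP9_C3} and~\ref{lem:TRSP9_C4} verbatim, while you spell those arguments out again. Two small corrections, neither fatal. First, $C_2$ and $C_3$ only guarantee that the neighbours $v_1,v_2,v_3$ of the $3$-vertex $u$ are $3^+$-vertices, not $4^+$; this is all you need for the minimality comparison. Second, your edge count for $k=3$ is off: when $H'$ contains all three inserted $2$-vertices (and hence all of $v_1,v_2,v_3$), you have $|E(H')|-|E(H)|=6-3=3$ while $|V(H')|-|V(H)|=2$, so the manipulation actually requires $\alpha\ge 3$, not $\alpha\ge 2$. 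Since you already noted $\alpha\ge 10/3$, the conclusion survives, but the stated reason does not.
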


\begin{proof}
The target graph $\rho(SP_q^+)$ is vertex-transitive, antiautomorphic and 
has Properties $P_{1,q}$, $P_{2, \frac{q-1}{2}}$ and $P_{3, \frac{q-5}{4}}$ by Lemma~\ref{lem:PTRSP}. Since $q\ge 9$, it has at least Properties $P_{1,9}$, $P_{2, 4}$ and $P_{3,1}$ which are the same properties has $\rho(SP_9^+)$ (see Lemma~\ref{lem:PrhoSP}). Therefore, Configurations $C_0$ to $C_4$ are forbidden by Lemmas~\ref{TRSP9_C0},~\ref{TRSP9_C1},~\ref{TRSP9_C2}, \ref{TRSP9_C3}, and~\ref{lem:TRSP9_C4}.
\end{proof}

Since $\rho(SP_q^+)$ has better properties than $\rho(SP_5^+)$ since $q\ge 9$, we will apply Claim~\ref{c:SP5-2} in the proofs of this section. 

\begin{lemma}
The graph $G$ does not contain Configuration $C_5$.
\end{lemma}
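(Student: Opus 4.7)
The plan is to reduce $G$ by removing $u$ together with all of its 2-neighbors and extend a coloring of the smaller graph back to $G$. Write $w_1,\dots,w_d$ for the 2-neighbors of $u$ and $v_i$ for the other neighbor of $w_i$. Set $G' = G - \{u, w_1, \ldots, w_d\}$. Since $G'$ has strictly fewer vertices than $G$ and $\mad(G')\le \mad(G) < 4-\frac{8}{q+3}$, the minimality of $G$ guarantees a $\rho(SP_q^+)$-sp-coloring $\varphi$ of $G'$. The goal is to first color $u$, then every $w_i$, consistently with $\varphi$.

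The key observation is that each 2-neighbor $w_i$ forbids at most one color for $u$. This is exactly the content of Claim~\ref{c:SP5-2}: the argument given there uses only that the target graph is vertex-transitive and antiautomorphic and satisfies properties $P_{1,4}$ and $P_{2,1}$. By Lemma~\ref{lem:PTRSP}, $\rho(SP_q^+)$ enjoys $P_{1,q}$ and $P_{2,(q-1)/2}$ for $q\ge 9$, both strictly stronger, so the same case analysis applies verbatim: depending on the signs of $v_iw_i$ and $w_iu$, exactly one color $c_i\in V(\rho(SP_q^+))$ is incompatible in the sense that if $\varphi(u)=c_i$ then $w_i$ cannot be completed, and every other choice for $\varphi(u)$ admits a valid color for $w_i$.

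Summing over the $d$ 2-neighbors yields at most $d$ forbidden colors for $u$. Since $|V(\rho(SP_q^+))|=2(q+1)=2q+2$ and by assumption $d<2q+2$, at least one color remains available. Assign such a color to $u$; then for every $i$, our choice avoids $c_i$, so $w_i$ can be colored to complete a $\rho(SP_q^+)$-sp-homomorphism of $G$, contradicting the assumption that $G$ is a counterexample.

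There is no real obstacle here beyond correctly importing Claim~\ref{c:SP5-2} into the $\rho(SP_q^+)$ setting; the numerical bound $d<2q+2$ in the definition of $C_5$ is chosen precisely so that $d$ forbidden colors cannot exhaust the $2q+2$ vertices of $\rho(SP_q^+)$, making the extension automatic once the per-neighbor bound is established.
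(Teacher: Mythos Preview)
Your proof is correct and follows essentially the same approach as the paper: remove $u$ together with its 2-neighbors, invoke minimality to color the remainder, use Claim~\ref{c:SP5-2} (which carries over because $\rho(SP_q^+)$ has at least the properties $P_{1,4}$ and $P_{2,1}$) to bound the colors forbidden by each 2-neighbor by one, and then count $d<2q+2$ against $|V(\rho(SP_q^+))|=2q+2$. Your write-up is in fact a bit more explicit than the paper's in justifying the transfer of Claim~\ref{c:SP5-2} to the new target graph.
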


\begin{proof}
Suppose that $G$ contains configuration $C_5$. By minimality of $G$, the graph obtained from $G$ by removing $u$ and its $2$-neighbors admits a $\rho(SP_q^+)$-sp-coloring $\varphi$. Every $2$-neighbor of $u$ forbids at most 1 color from $u$ by Claim~\ref{c:SP5-2}. Since there are $2q+2$ colors in $G$, we can find always find a color for $u$ to extend $\varphi$ to 
a $\rho(SP_q^+)$-sp-coloring of $G$, a contradiction.
\end{proof}

\begin{lemma}
The graph $G$ does not contain Configuration $C_6$.
\end{lemma}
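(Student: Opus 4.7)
The plan is to proceed exactly as in the proof for Configuration $C_5$, but using the non-$2$-neighbor $v$ to fix a ``base'' constraint and then controlling the $2$-neighbors via Claim~\ref{c:SP5-2}. First, I would let $W$ be the set consisting of $u$ together with its (at most $d-1$) $2$-neighbors. By minimality of $G$, the graph $G - W$ admits a $\rho(SP_q^+)$-sp-coloring $\varphi$; in particular, $v$ is colored by $\varphi$.

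Next, I would count the number of colors available for $u$. By Property $P_{1, q}$ of $\rho(SP_q^+)$ (Lemma~\ref{lem:PTRSP}), the colored neighbor $v$ leaves at least $q$ candidate colors for $u$ (namely the positive or negative neighbors of $\varphi(v)$ in $\rho(SP_q^+)$, according to $s_G(uv)$). Each $2$-neighbor of $u$ forbids at most one additional color from $u$ by Claim~\ref{c:SP5-2}; since $u$ has at most $d-1$ such $2$-neighbors, at most $d-1$ colors in total are excluded. Hence at least $q - (d-1)$ colors remain usable for $u$, and the assumption $d < q+1$ gives $q - (d-1) \ge 1$, so a valid color for $u$ exists.

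Finally, once $u$ is colored we extend $\varphi$ to each $2$-neighbor $w$ of $u$: by Property $P_{1, q}$ applied at $\varphi(u)$, the vertex $w$ has at least $q$ candidate colors with respect to $u$, and the constraint coming from the other neighbor of $w$ in $G$ (already colored by $\varphi$) is exactly the one used in Claim~\ref{c:SP5-2}, which guarantees that at least one color remains (here we use $q \ge 9$, so the margin is ample). This yields a $\rho(SP_q^+)$-sp-coloring of $G$, contradicting the choice of $G$. The only mildly delicate point, and the one I would double-check, is the coloring of the $2$-neighbors after $u$ has been recolored; but this is precisely the content of Claim~\ref{c:SP5-2}, so no new argument is needed.
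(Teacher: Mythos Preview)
Your proof is correct and follows essentially the same approach as the paper: remove $u$ together with its $2$-neighbors, use Property $P_{1,q}$ at the single non-$2$-neighbor $v$ to get $q$ candidate colors for $u$, subtract one color per $2$-neighbor via Claim~\ref{c:SP5-2}, and conclude from $d<q+1$. Your added paragraph explaining the subsequent extension to the $2$-neighbors is more explicit than the paper, but this is already encapsulated in Claim~\ref{c:SP5-2}, as you note.
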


\begin{proof}
Suppose that $G$ contains configuration $C_6$. By minimality of $G$, the graph obtained from $G$ by removing $u$ and its $2$-neighbors admits a $\rho(SP_q^+)$-sp-coloring $\varphi$. By Property $P_{1, q}$, we have $q$ available colors for $u$. Every $2$-neighbor of $u$ forbids at most 1 color from $u$ by Claim~\ref{c:SP5-2}. We can therefore always find a color for $u$ to extend $\varphi$ to a $\rho(SP_q^+)$-sp-coloring of $G$, a contradiction.
\end{proof}

\begin{lemma}
The graph $G$ does not contain Configuration $C_7$.
\end{lemma}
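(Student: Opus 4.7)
The plan is to mimic the two preceding lemmas (for Configurations $C_5$ and $C_6$). Suppose for contradiction that $G$ contains $C_7$: a $d$-vertex $u$ with $d<\frac{q+3}{2}$ having at most $d-2$ 2-neighbors, hence at least two non-2-neighbors which I will call $v_1$ and $v_2$. Delete $u$ together with its 2-neighbors to obtain a strictly smaller graph $G'$; by minimality of $G$, there is a $\rho(SP_q^+)$-sp-coloring $\varphi$ of $G'$. The goal is to extend $\varphi$ first to $u$ and then back to the deleted 2-vertices, contradicting the assumption that $G$ admits no such coloring.

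The colors available for $u$ will be estimated by two bounds, exactly as in the proof for $C_6$ but using Property $P_{2,\frac{q-1}{2}}$ in place of $P_{1,q}$. Since $v_1$ and $v_2$ are not removed, both are already colored by $\varphi$; in the generic situation where $\varphi(v_1)$ and $\varphi(v_2)$ form a clique of size $2$ in $\rho(SP_q^+)$, Property $P_{2,\frac{q-1}{2}}$ of Lemma~\ref{lem:PTRSP} supplies at least $\frac{q-1}{2}$ colors for $u$ satisfying the sign constraints imposed by $v_1$ and $v_2$. By Claim~\ref{c:SP5-2} each of the at most $d-2$ deleted 2-neighbors of $u$ forbids at most one color from $u$, so at least $\frac{q-1}{2}-(d-2)=\frac{q+3}{2}-d$ colors remain, and this quantity is strictly positive precisely because of the hypothesis $d<\frac{q+3}{2}$. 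Once $u$ is colored, every deleted 2-neighbor has exactly two colored neighbors (its other original neighbor, already colored by $\varphi$, and $u$), and Claim~\ref{c:SP5-2} guarantees a compatible color for it; this yields a $\rho(SP_q^+)$-sp-coloring of $G$, the desired contradiction.

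The technical point I expect to require the most care is the degenerate case in which $\varphi(v_1)$ and $\varphi(v_2)$ either coincide or are antitwins, so $\{\varphi(v_1),\varphi(v_2)\}$ is not a clique and Property $P_{2,\frac{q-1}{2}}$ does not apply directly. I plan to split on the signs of $uv_1$ and $uv_2$: when the two constraints imposed on $u$ agree (equal colors with equal signs, or antitwin colors with opposite signs), Property $P_{1,q}$ still leaves $q>d-2$ candidates for $u$ and the counting above carries through; when the two constraints conflict, I will recolor one of $v_1,v_2$, say $v_1$, by using Property $P_{1,q}$ with respect to any one of its neighbors colored by $\varphi$. Since $v_1$ then has at least $q$ admissible choices and at most two of them (namely $\varphi(v_2)$ and its antitwin) preserve the degeneracy, a safe alternative always exists for $q\geq 9$, returning us to the generic case already handled.
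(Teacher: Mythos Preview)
Your overall counting argument is right, but the way you set up the reduction creates a problem you then fail to repair. The paper does \emph{not} delete $u$: it removes only the $2$-neighbors of $u$ and keeps $u$ in the smaller graph. Consequently $\varphi$ already assigns $u$ a color compatible with both $\varphi(v_1)$ and $\varphi(v_2)$, which automatically rules out the ``conflicting'' degenerate case (equal colors with different signs on $uv_1,uv_2$, or antitwin colors with equal signs). One then simply \emph{recolors} $u$ using $P_{2,\frac{q-1}{2}}$ (or $P_{1,q}$ in the harmless degenerate case) and the $2$-neighbors afterwards.

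By deleting $u$ as well, you allow the conflicting degenerate case to occur, and your proposed fix does not work: you suggest recoloring $v_1$ ``using Property $P_{1,q}$ with respect to any one of its neighbors colored by $\varphi$'', but $v_1$ is a $4^+$-vertex (there are no $3$-vertices by $C_4$) and may have arbitrarily many colored neighbors in $G'$. Satisfying just one of them via $P_{1,q}$ says nothing about the others, so there is no guarantee that any alternative color for $v_1$ exists at all, let alone one avoiding $\varphi(v_2)$ and its antitwin. The simplest repair is exactly what the paper does: keep $u$ when forming the smaller graph, so that the conflicting case is excluded from the start and no recoloring of $v_1$ is ever needed.
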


\begin{proof}
Suppose that $G$ contains configuration $C_7$. By minimality of $G$, the graph obtained from $G$ by removing the $2$-neighbors of $u$ admits a $\rho(SP_q^+)$-sp-coloring $\varphi$. By Property $P_{2, \frac{q-1}{2}}$, $u$ can be recolored in $\frac{q-1}{2}$ distinct colors such that there is no conflict with $\varphi(v_1)$ and $\varphi(v_2)$. Every $2$-neighbor of 
$u$ forbids at most 1 color from $u$ by Claim~\ref{c:SP5-2}. We can therefore always find a color for $u$ to extend $\varphi$ to a $\rho(SP_q^+)$-sp-coloring of $G$, a contradiction.
\end{proof}

\begin{lemma}
The graph $G$ does not contain Configuration $C_8$.
\end{lemma}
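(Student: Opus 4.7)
The plan is to mirror the proofs for configurations $C_5$, $C_6$, and $C_7$, climbing one step further and invoking Property $P_{3,\frac{q-5}{4}}$ of $\rho(SP_q^+)$ guaranteed by Lemma~\ref{lem:PTRSP}. First I would use minimality: delete the (at most $d-3$) $2$-neighbors of $u$ from $G$ to obtain a graph $G'$, which by the choice of $G$ admits a $\rho(SP_q^+)$-sp-coloring $\varphi$. The vertex $u$ together with $v_1,v_2,v_3$ is then already colored, but I intend to re-color $u$ so that the coloring extends back to every removed $2$-vertex via Claim~\ref{c:SP5-2}.

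The main step is to show that $u$ can be given a color compatible with $\varphi(v_1)$, $\varphi(v_2)$, $\varphi(v_3)$ and with at least one admissible color per deleted $2$-vertex. Applying Property $P_{3,\frac{q-5}{4}}$ to the triple $(\varphi(v_1),\varphi(v_2),\varphi(v_3))$ together with the signs of the edges $uv_1, uv_2, uv_3$ produces at least $\frac{q-5}{4}$ candidate colors for $u$. Each of the (at most $d-3$) deleted $2$-neighbors forbids, by Claim~\ref{c:SP5-2}, at most one color from $u$. The hypothesis $d<\frac{q+7}{4}$ is precisely $d-3<\frac{q-5}{4}$, so at least one candidate survives; choosing it for $u$ and then extending $\varphi$ to each removed $2$-vertex yields a $\rho(SP_q^+)$-sp-coloring of $G$, contradicting the choice of $G$.

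The main obstacle I anticipate is that Property $P_{3,\frac{q-5}{4}}$ requires $\varphi(v_1),\varphi(v_2),\varphi(v_3)$ to form a clique in $\rho(SP_q^+)$. Since $SP_q^+$ is a complete signed graph, the only non-adjacent pairs in $\rho(SP_q^+)$ are the antitwin pairs $\{w,\overline{w}\}$. If some $\varphi(v_i)$ and $\varphi(v_j)$ coincide or are antitwins, the constraints that $v_i$ and $v_j$ impose on $u$ collapse into a single constraint (depending on the signs of $uv_i$ and $uv_j$), so the effective number of independent constraints on $u$ drops to two or one, and I would instead invoke Property $P_{2,\frac{q-1}{2}}$ or Property $P_{1,q}$. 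Since $\frac{q-1}{2}\ge\frac{q-5}{4}$ and $q\ge\frac{q-5}{4}$ hold for $q\ge 9$, the pool of candidate colors for $u$ only grows in these degenerate cases, so the same counting $d-3<\frac{q-5}{4}$ seals the contradiction.
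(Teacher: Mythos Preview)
Your proof is correct and follows exactly the same strategy as the paper: delete the $2$-neighbors of $u$, invoke minimality to obtain a coloring, apply Property $P_{3,\frac{q-5}{4}}$ to recolor $u$ relative to $\varphi(v_1),\varphi(v_2),\varphi(v_3)$, and use Claim~\ref{c:SP5-2} together with $d-3<\frac{q-5}{4}$ to finish. Your discussion of the degenerate cases (coinciding or antitwin colors among the $v_i$) is in fact more careful than the paper's proof, which silently applies $P_{3,\frac{q-5}{4}}$ without checking the clique hypothesis.
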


\begin{proof}
Suppose that $G$ contains configuration $C_8$. By minimality of $G$, the graph obtained from $G$ by removing the $2$-neighbors of $u$ admits a $\rho(SP_q^+)$-coloring $\varphi$. By Property $P_{3, \frac{q-5}{4}}$, $u$ can be recolored in $\frac{q-5}{4}$ colors such that there is no conflict with $\varphi(v_1)$, $\varphi(v_2)$ and $\varphi(v_3)$. Every $2$-neighbor of $u$ forbids at most 1 color from $u$  by Claim~\ref{c:SP5-2}. We can 
therefore always find a color for $u$ to extend $\varphi$ to a $\rho(SP_q^+)$-sp-coloring of $G$, a contradiction.
\end{proof}

\subsection{Discharging}
\label{sec:SPq+positive}
Let $\omega$ be the initial weighting defined
by $\omega(v)=d(v)-4+\frac{8}{q+3}$ for each vertex $v$ of degree $d(v)$. By
construction, the sum of all the weights is negative since $\mad(G) < 4-\frac{8}{q+3}$.

We introduce the following discharging rule:
\begin{itemize}
\item[($R$)] Every $4^+$-vertex gives $\frac{q-1}{q+3}$ to each of its neighbors of degree 2.
\end{itemize}

This section is devoted to obtaining a contradiction by proving that
every vertex of $G$ has non-negative final weights after the
discharging procedure. We distinguish several cases for the vertices,
depending on their degree. First note that since $G$ cannot contain $C_0$, $C_1$ and $C_4$, $G$ contains no 0, 1 or 3-vertex.

\paragraph{$2$-vertices.}

Let $v$ be a $2$-vertex. Since $C_2$ is forbidden, $v$ only has
$4^+$-neighbors. Thus, each of them gives $\frac{q-1}{q+3}$ to $v$. Therefore, the final weight of $v$ is $2 - \left(4 - \frac{8}{q+3}\right) + 2\cdot \frac{q+1}{q+3}= 0$.

\paragraph{$d$-vertices with $4 \leq d < \frac{q+7}{4}$.}

Let $v$ be such a $d$-vertex. Since $C_8$ is forbidden, $v$ has at most $d-4$ $2$-neighbors. Therefore, in the worst case, $v$ has final weight at 
least $d - \left(4-\frac{8}{q+3}\right) - (d-4) \cdot \frac{q-1}{q+3} \geq 4 - \left(4-\frac{8}{q+3}\right) > 0$.

\paragraph{$d$-vertices with $\frac{q+7}{4} \leq d < \frac{q+3}{2}$.}

Let $v$ be such a $d$-vertex. Since $C_7$ is forbidden, $v$ has at most $d-3$ $2$-neighbors. Therefore, in the worst case, $v$ has final weight at 
least $d - \left(4-\frac{8}{q+3}\right) - (d-3) \cdot \frac{q-1}{q+3} \geq \frac{q+7}{4} - \left(4-\frac{8}{q+3}\right) - \left(\frac{q+7}{4}-3\right) \cdot \frac{q-1}{q+3} = 0$.

\paragraph{$d$-vertices with $\frac{q+3}{2} \leq d < q+1$.}

Let $v$ be such a $d$-vertex. Since $C_6$ is forbidden, $v$ has at most $d-2$ $2$-neighbors. Therefore, in the worst case, $v$ has final weight at 
least $d - \left(4-\frac{8}{q+3}\right) - (d-2) \cdot \frac{q-1}{q+3} \geq \frac{q+3}{2} - \left(4-\frac{8}{q+3}\right) - \left(\frac{q+3}{2}-2\right) \cdot \frac{q-1}{q+3} = 0$.

\paragraph{$d$-vertices with $q+1 \leq d < 2q+2$.}

Let $v$ be such a $d$-vertex. Since $C_5$ is forbidden, $v$ has at most $d-1$ $2$-neighbors. Therefore, in the worst case, $v$ has final weight at 
least $d - \left(4-\frac{8}{q+3}\right) - (d-1) \cdot \frac{q-1}{q+3} \geq q+1 - \left(4-\frac{8}{q+3}\right) - (q+1-1) \cdot \frac{q-1}{q+3} > 0$.

\paragraph{$d$-vertices with $2q+2 \leq d$.}

Let $v$ be such a $d$-vertex. Vertex $v$ has at most $d$ $2$-neighbors. Therefore, in the worst case, $v$ has final weight at least $d - \left(4-\frac{8}{q+3}\right) - d \cdot \frac{q-1}{q+3} \geq 2q+2 - \left(4-\frac{8}{q+3}\right) - (2q+2) \cdot \frac{q-1}{q+3} > 0$.\newline \newline

Every vertex has non-negative weight after discharging so $G$ cannot have 
maximum average degree smaller than $4-\frac{8}{q+3}$. This gives us a contradiction and concludes the proof.

\section*{Acknowledgments}
The authors would like to thank F. Dross, M. Montassier, T. Pierron, M. Senhaji, A. Raspaud and Z. Wang for fruitful conversations at the 2019 HoSiGra meeting.

\bibliographystyle{plain}

\end{document}